\newtheorem{Theorem}{Theorem}[section]
\newtheorem{Lemma}{Lemma}[section]
\theoremstyle{definition}
\theoremstyle{remark}
\newtheorem{Remark}{Remark}[section]
\numberwithin{equation}{section}
\renewcommand{\r}{\rho}
\renewcommand{\t}{\theta}
\renewcommand{\u}{{\bf u}}
\newcommand{\R}{{\mathbb R}}
\newcommand{\Dv}{{\rm div}}
\newcommand{\tr}{{\rm tr}}
\newcommand{\dl}{\delta}
\def\f{\frac}
\renewcommand{\O}{\Omega}
\def\ov{\overline}
\def\D{\Delta }
\def\hf1{^\f{1}{1-\xi^2}}
\def\S{{\mathcal S}}
\def\T{{\mathcal T}}
\def\be{\begin{equation}}
\def\en{\end{equation}}
\def\bs{\begin{split}}
\def\es{\end{split}}
\newcommand{\F}{{\mathtt F}}
\begin{document}

\author{Xianpeng HU and Dehua Wang}
\address{Department of Mathematics, University of Pittsburgh,
                           Pittsburgh, PA 15260, USA.}
\email{xih15@pitt.edu}
\address{Department of Mathematics, University of Pittsburgh,
                           Pittsburgh, PA 15260, USA.}
\email{dwang@math.pitt.edu}

\title[Strong Solutions to the Compressible Viscoelastic Fluids]
{Strong Solutions to the Three-Dimensional Compressible Viscoelastic Fluids}

\keywords{Compressible viscoelastic fluids, strong solution,
existence, uniqueness.} \subjclass[2000]{35A05, 76A10,76D03.}
\date{\today}

\begin{abstract}
The existence and uniqueness of the local strong solution to the
three-dimensional compressible viscoelastic fluids near the
equilibrium is established. In addition to the uniform estimates on the velocity,
some essential uniform estimates  on the
density and the deformation gradient   are also obtained.
\end{abstract}

\maketitle

\section{Introduction}
Viscoelastic fluids exhibit a combination of both fluid and solid characteristics,
and keep memory of their past deformations.  The interaction between the
microscopic elastic properties and the macroscopic fluid motions
leads to the rich and complicated rheological phenomena in
viscoelastic fluids, and also causes formidable analytic and
numerical challenges in mathematical analysis.
We consider the following equations of three-dimensional
compressible flow of viscoelastic fluids \cite{CD, Joseph, LZ}:
\begin{subequations} \label{e1e}
\begin{align}
&\r_t +\Dv(\r\u)=0,\label{e1e1}\\
&(\r\u)_t+\Dv\left(\r\u\otimes\u\right)-\mu\D \u-(\lambda+\mu)\nabla\Dv\u+\nabla P(\r)=\Dv(\r\, \F\,\F^\top),\label{e1e2}\\
&\F_t+\u\cdot\nabla\F=\nabla\u \, \F,\label{e1e3}
\end{align}
\end{subequations}
where $\r$ stands for the density, $\u\in \R^3$ the velocity, and
$\F\in M^{3\times 3}$ (the set of  $3\times 3$ matrices)  the
deformation gradient. The viscosity coefficients $\mu, \lambda$
are two constants satisfying $\mu>0, 2\mu+3\lambda>0$, which
ensures that the operator $-\mu\D \u-(\lambda+\mu)\nabla\Dv\u$ is
a strongly elliptic operator. The pressure term $P(\r)$ is an
increasing and convex function of $\rho$ for $\rho>0$, and in
particular, we take $P(\r)=\r^\gamma$ with $\gamma>1$ a conatant.
The symbol $\otimes$ denotes the Kronecker tensor product,
$\F^\top$ means the transpose matrix of $\F$, and the notation
$\u\cdot\nabla\F$ is understood to be $(\u\cdot\nabla)\F$. As
usual, we call equation \eqref{e1e1} the continuity equation. For
system \eqref{e1e}, the corresponding elastic energy is chosen to
be  the special form of the Hookean linear elasticity:
$$W(\F)=\frac12|\F|^2,$$
which, however, does not reduce the essential difficulties for analysis.
The methods and results of this paper can be applied to more general cases.

In this paper, we consider equations \eqref{e1e} subject to the
initial condition:
\begin{equation}\label{IC1}
(\r, \u, \F)|_{t=0}=(\r_0(x), \u_0(x), \F_0(x)), \quad x\in\R^3,
\end{equation}
and we are  interested  in the existence and uniqueness of strong
solution to the initial-value problem \eqref{e1e}-\eqref{IC1} near
its equilibrium state in the three dimensional space $\R^3$. Here
the {equilibrium state} of the system \eqref{e1e} is defined as:
$\r$ is a positive constant (for simplicity, $\r=1$), $\u=0$, and
$\F=I$ (the identity matrix in $M^{3\times 3}$). We introduce a
new unknown variable $E$ by setting $$\F=I+E.$$   Then,
\eqref{e1e} becomes
\begin{subequations} \label{e1}
\begin{align}
&\r_t +\Dv(\r\u)=0,\label{e11}\\
&(\r\u)_t+\Dv\left(\r\u\otimes\u\right)-\mu\D \u-(\mu+\lambda)\nabla\Dv\u+\nabla P(\r)=\Dv(\r (I+E)(I+E)^\top),\label{e12}\\
&E_t+\u\cdot\nabla E=\nabla\u E+\nabla\u,\label{e13}
\end{align}
\end{subequations}
with the initial data
\begin{equation}\label{IC}
(\r, \u, E)|_{t=0}=(\r_0(x), \u_0(x), E_0(x)), \quad x\in\R^3.
\end{equation}
By a {\em strong solution} to \eqref{e1}-\eqref{IC}, we mean a
triplet $(\r, \u, E)$ satisfying \eqref{e1} almost everywhere with
initial condition \eqref{IC}, in particular, $\u(\cdot, t)\in W^{2,q}$ and $(\r(t,\cdot),E(t,\cdot))\in W^{1,q}$
with $q\in(3,\infty)$ in some time interval $[0,T]$ for $T>0$ in
this paper.

When the density $\r$ is a constant, system \eqref{e1e} governs
the homogeneous incompressible viscoelastic fluids, and there
exist rich results in the literature for the global existence of
classical solutions (namely in $H^3$ or other functional spaces
with much higher regularity); see \cite{CM, CZ, KP, LLZH2, LLZH,
LZ, LLZ, LZP, LW} and the references therein. When the density
$\r$ is not a constant, the question related to existence becomes
much more complicated and not much has been done. In \cite{LLZH31}
the authors considered the global existence of classical solutions
in $H^3$ of small perturbation near its equilibrium for the
compressible viscoelastic fluids without the pressure term. One of
the main difficulties in proving the global existence is the
lacking of the dissipative estimate for the deformation gradient
and the gradient of the density. To overcome this difficulty, for
incompressible cases, the authors in \cite{LLZH} introduced an
auxiliary function to obtain the dissipative estimate, while the
authors in \cite{LZ} directly deal with the quantities such as
$\Delta \u+\Dv \F$. Those methods can provide them with some good
estimates, partly because of their high regularity of $(\u,\F)$.
However, in this paper, we deal with the strong solution with much
less regularity in $W^{2,q}$, $q\in(3,\infty]$, hence those
methods do not apply. For that purpose, we need a new method to
overcome this obstacle, and we find that
 a combination between the velocity and the
convolution of the divergence of the deformation gradient with the
fundamental solution of Laplace operator will develop some good
dissipative estimates which may be very useful for the global
existence. The local existence is established using a fixed point
theorem. Some uniform estimates on the solution are also obtained.
These estimates are essential for the global existence although one of the estimates needs to be improved
in order to establish the global existence.

The viscoelastic fluid system \eqref{e1e} can be regarded as a
combination of compressible Navier-Stokes equations with the
source term $\Dv(\r\F\F^\top)$ and the equation \eqref{e1e3}. For
the global existence of classical solutions with
 small perturbation near an equilibrium for the compressible
Navier-Stokes equations, we refer the   reader to
\cite{MT1, MT, AI, SS} and the references cited therein. We
remark that,  for  the  nonlinear compressible inviscid elastic systems,
the existence of solutions  was established by Sideris-Thomases in \cite{ST} under the null condition; see also \cite{ST2} for a related discussion.

The existence of global weak solutions with large initial data of
\eqref{e1e} is still an outstanding open question. In this
direction for the homogeneous incompressible viscoelastic fluids,
when the contribution of the strain rate (symmetric part of
$\nabla\u$) in the constitutive equation is neglected,
Lions-Masmoudi in \cite{LM} proved the global existence of weak
solutions with large initial data for the Oldroyd model. Also
Lin-Liu-Zhang in \cite{LLZ} proved the existence of global weak
solutions with large initial data for the incompressible
viscoelastic fluids when the velocity satisfies the Lipschitz
condition. When dealing with the global existence of weak
solutions of the viscoelastic fluid system \eqref{e1e} with large
data, the rapid oscillation of the density and the
non-compatibility between the quadratic form and the weak
convergence are  two of the major difficulties.

The rest of the paper is organized as follows. In Section 2, we
recall briefly the compressible viscoelastic fluids from some
basic mechanics and  conservation laws. In Section 3, we state our
main results, including the local existence and uniqueness of the
strong solution to the system \eqref{e1}-\eqref{IC}, as well as
uniform estimates which may be very useful for the proof of the
global existence. In Section 4, we prove the local existence via a
fixed-point theorem. In Section 5, we prove the uniqueness of the
solution obtained in Section 4. In Section 6, we establish some
uniform {\it a priori} estimates, especially on the dissipation of
the deformation gradient and gradient of the density. In Section
7, we improve the uniform estimates in Section 6.

\section{Background of Mechanics for Viscoelastic Fluids}

To provide a better understanding of system \eqref{e1e}, we recall briefly some background of viscoelastic fluids from mechanics in this section.

First, we discuss  the deformation gradient $\F$. The dynamics  of
a velocity field $\u(x,t)$ in mechanics can be described by the
flow map or particle trajectory $x(t,X)$, which is a time
dependent family of orientation preserving diffeomorphisms defined
by:
\begin{equation}\label{x2}
\f{d}{dt}x(t,X)=\u(t, x(t,X)),\quad
x(0,X)=X,
\end{equation}
where the material point  $X$  (Lagrangian coordinate) is deformed to the spatial position $x(t,X)$, the reference (Eulerian) coordinate  at time $t$.
The deformation gradient $\widetilde{\F}$ is  defined as
$$\widetilde{\F}(t,X)=\f{\partial x}{\partial X}(t,X),$$
which describes the change of configuration, amplification or pattern during
the dynamical process,
and satisfies the following equation by changing the order of  differentiation:
\begin{equation}\label{x1}
\f{\partial\widetilde{\F}(t,X)}{\partial t}=\f{\partial\u(t,x(t,X))}{\partial X}.
\end{equation}
In the Eulerian coordinate, the corresponding deformation gradient $\F(t,x)$ is defined as
$$\F(t, x(t,X))=\widetilde{\F}(t,X).$$
Equation \eqref{x1}, combined with the chain rule and \eqref{x2}, gives
\begin{equation*}
\begin{split}
\partial_t\F(t, x(t,X))+\u\cdot\nabla\F(t,x(t,X))&=\partial_t\F(t, x(t,X))+\f{\partial\F(t,x(t,X))}{\partial x}
\cdot\f{\partial x(t,X)}{\partial t}\\
&=\f{\partial\widetilde{\F}(t,X)}{\partial t}=\f{\partial\u(t,x(t,X))}{\partial X}
   =\f{\partial\u(t, x(t,X))}{\partial x}\f{\partial x}{\partial X}\\
&=\f{\partial\u(t, x(t,X))}{\partial x}\widetilde{\F}(t, X)=\nabla\u\cdot\F,
\end{split}
\end{equation*}
which is exactly  equation \eqref{e1e3}. Here, and in what follows, we use the conventional notations:
$$(\nabla\u)_{ij}=\f{\partial u_i}{\partial x_j},\quad(\nabla\u\,\F)_{i,j}=(\nabla\u)_{ik}\F_{kj},\quad (\u\cdot\nabla\F)_{ij}
=u_k\f{\partial \F_{ij}}{\partial x_{k}},$$
and summation over repeated indices will always be well understood. In viscoelastic fluids, \eqref{e1e3} can also be interpreted
as the consistency of the flow maps generated by the velocity field $\u$ and the deformation gradient $\F$.

The difference between fluids and solids lies in the fact that,  in fluids, such as Navier-Stokes equations \cite{AI},
the internal energy can be determined solely by the determinant
part of $\F$ (equivalently the density $\r$, and hence, \eqref{e1e3} can be disregarded); while in elasticity, the energy depends on all information of $\F$.

In the continuum physics, if we assume that the material is
homogeneous, then the conservation laws of mass and of momentum
become \cite{CD, LLZH, RHN, ST2}:
\begin{equation}\label{x3}
\partial_t\r+\Dv(\r\u)=0,
\end{equation}
and
\begin{equation}\label{x4}
\partial_t(\r\u)+\Dv(\r\u\otimes\u)-\mu\Delta\u-(\mu+\lambda)\nabla\Dv\u+\nabla P(\r)=\Dv((\det\F)^{-1}S\F^\top),
\end{equation}
where
\begin{equation}\label{x5}
\r\det\F=1,
\end{equation}
and
\begin{equation}\label{y1}
S_{ij}(\F)=\f{\partial W}{\partial \F_{ij}}.
\end{equation}
Here $S$, $\r S\F^\top$, $W(\F)$ denote $\textit{Piola-Kirchhoff stress}$, $\textit{Cauchy stress}$, and the elastic energy of
the material, respectively. Recall that the condition \eqref{y1} implies that the material is  hyperelastic \cite{LW}.
In the case of Hookean (linear) elasticity \cite{LLZH2, LLZH, LZP},
\begin{equation}\label{x6}
W(\F)=\f{1}{2}|\F|^2=\f{1}{2}tr(\F\F^\top),
\end{equation}
where the notation ``$\tr$"  stands for the trace operator of a matrix,
and hence,
\begin{equation}\label{x7}
S(\F)=\F.
\end{equation}
Combining equations \eqref{x2}-\eqref{x7} together, we obtain  system \eqref{e1e}.

If the viscoelastic system \eqref{e1e} satisfies
$$\Dv(\r_0\F_0^\top)=0,$$
initially at $t=0$ (with $\F_0=I+E_0$), 
it was verified in 
\cite{LZ} (see Proposition 3.1)  that this condition will insist in time, that is,
\begin{equation}\label{cc1}
\Dv(\r(t)\F(t)^\top)=0,\quad\textrm{for}\quad t\ge 0.
\end{equation}

Another hidden, but important, property of the viscoelastic fluids
system \eqref{e1e} is concerned with the curl of the deformation
gradient (for the incompressible case, see \cite{LLZH2, LLZH}).
Actually, the following lemma says that the curl of the
deformation gradient is of higher order.

\begin{Lemma}\label{curl}
Assume that \eqref{e1e3} is satisfied and $(\u, \F)$ is the
solution of the system \eqref{e1e}. Then the following identity
\begin{equation}\label{curl1}
\F_{lk}\nabla_l \F_{ij}=\F_{lj}\nabla_l \F_{ik}
\end{equation}
holds for all time $t> 0$ if it initially satisfies \eqref{curl1}.
\end{Lemma}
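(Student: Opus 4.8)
The key object is the tensor
\[
B_{ijk}:=\F_{lk}\nabla_l\F_{ij}-\F_{lj}\nabla_l\F_{ik},
\]
which measures exactly the failure of identity \eqref{curl1}. The plan is to show that $B$ satisfies a \emph{linear, homogeneous} transport equation of precisely the same type as \eqref{e1e3} itself, namely
\[
\partial_t B_{ijk}+\u\cdot\nabla B_{ijk}=(\nabla\u)_{im}B_{mjk}
\]
for each fixed pair $(j,k)$. Granting this, the Lemma follows at once: along the particle trajectories $\frac{d}{dt}x(t,X)=\u(t,x(t,X))$ the quantity $B$ solves the linear ODE system $\dot B_{ijk}=(\nabla\u)_{im}B_{mjk}$ in the index $i$, so $B(0,\cdot)=0$ forces $B(t,\cdot)=0$ for all $t>0$; equivalently one applies Gronwall's inequality to $\|B(t)\|_{L^q}$ using $\|\nabla\u(t)\|_{L^\infty}\lesssim\|\u(t)\|_{W^{2,q}}$, valid for $q\in(3,\infty)$.

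To derive the transport identity I would write \eqref{e1e3} componentwise as $\partial_t\F_{ab}+u_m\partial_m\F_{ab}=\partial_m u_a\,\F_{mb}$, set $C_{ijk}:=\F_{lk}\nabla_l\F_{ij}$, and compute the material derivative $D_tC_{ijk}$ with $D_t=\partial_t+\u\cdot\nabla$. Differentiating the product in $t$ uses the $\F$-equation for $\partial_t\F_{lk}$ and, after an $x_l$-derivative, for $\partial_t\F_{ij}$; the terms carrying an explicit factor of $\u$ — both the one quadratic in $\nabla\F$ and the term $u_m\F_{lk}\partial_l\partial_m\F_{ij}$ — cancel against the convective part $\u\cdot\nabla C_{ijk}$. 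What remains is
\[
D_tC_{ijk}=(\partial_m u_l)\F_{mk}\nabla_l\F_{ij}-\F_{lk}(\partial_l u_m)\nabla_m\F_{ij}+\F_{lk}\F_{mj}\partial_l\partial_m u_i+\F_{lk}(\partial_m u_i)\nabla_l\F_{mj}.
\]
Relabeling the dummy indices $l\leftrightarrow m$ in the second term shows that the first two terms are negatives of one another, so they drop out, leaving $D_tC_{ijk}=\F_{lk}\F_{mj}\partial_l\partial_m u_i+\F_{lk}(\partial_m u_i)\nabla_l\F_{mj}$. Antisymmetrizing in $(j,k)$, i.e. forming $D_tB_{ijk}=D_tC_{ijk}-D_tC_{ikj}$, the Hessian term $\F_{lk}\F_{mj}\partial_l\partial_m u_i$ is symmetric under $j\leftrightarrow k$ (because $\partial_l\partial_m u_i=\partial_m\partial_l u_i$ a.e.\ for $\u\in W^{2,q}$) and cancels, while the last term contributes $(\partial_m u_i)\bigl(\F_{lk}\nabla_l\F_{mj}-\F_{lj}\nabla_l\F_{mk}\bigr)=(\nabla\u)_{im}B_{mjk}$, which is the claimed equation.

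The computation is completely elementary, so the only real difficulty is organizational: keeping the index contractions straight and spotting the three cancellations — (i) the $\u$-weighted pieces of $\partial_t C$ against $\u\cdot\nabla C$, (ii) the pair of terms involving $\nabla\u$ and $\nabla\F$ after relabeling dummies, and (iii) the Hessian-of-$\u$ term after antisymmetrization in $(j,k)$. Regularity causes no trouble: every term above lies in $L^q_{\mathrm{loc}}$ for a strong solution ($\u\in W^{2,q}$, $\F\in W^{1,q}$, $q>3$), and if one prefers one may first verify the identity for smooth approximate solutions and then pass to the limit. As a consistency check, in Lagrangian coordinates $B_{ijk}=\partial^2 x_i/\partial X_k\partial X_j-\partial^2 x_i/\partial X_j\partial X_k$, so its vanishing for all time is nothing but the symmetry of the mixed partials of the flow map $x(\cdot,X)$; the transport identity above is merely the Eulerian expression of this elementary fact.
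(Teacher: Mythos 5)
Your proposal is correct and follows essentially the same route as the paper: you derive the linear transport equation $\partial_t B_{ijk}+\u\cdot\nabla B_{ijk}=(\nabla\u)_{im}B_{mjk}$, which is precisely the paper's equation (2.17), via the same three cancellations (convective terms against $\u\cdot\nabla C$, the $l\leftrightarrow m$ relabeling killing the $\nabla\u\!\cdot\!\nabla\F$ pair, and the Hessian term vanishing by symmetry after antisymmetrizing in $(j,k)$), and then conclude by Gronwall along characteristics. The only cosmetic difference is that the paper splits the product rule into two separate evolution equations (its (2.12) and (2.13)) and controls the maximum $\mathcal{M}$ of $|B_{ijk}|^2$ over indices rather than invoking ODE uniqueness or an $L^q$-Gronwall directly, but the underlying argument is identical.
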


Again, throughout this paper, the standard summation notation over the
repeated index is always adopted.
\begin{proof}
First, we establish the evolution equation for the equality
$\F_{lk}\nabla_l \F_{ij}-\F_{lj}\nabla_l \F_{ik}$. Indeed, by the
equation \eqref{e1e3}, we can get
\begin{equation*}
\partial_t\nabla_l\F_{ij}+\u\cdot\nabla\nabla_l
\F_{ij}+\nabla_l\u\cdot\nabla\F_{ij}=\nabla_m\u_i\nabla_l\F_{mj}+\nabla_l\nabla_m\u_i
\F_{mj}.
\end{equation*}
Thus,
\begin{equation}\label{curl2}
\F_{lk}(\partial_t\nabla_l\F_{ij}+\u\cdot\nabla\nabla_l
\F_{ij})+\F_{lk}\nabla_l\u\cdot\nabla\F_{ij}=\F_{lk}\nabla_m\u_i\nabla_l\F_{mj}+\F_{lk}\nabla_l\nabla_m\u_i
\F_{mj}.
\end{equation}
Also, from \eqref{e1e3}, we obtain
\begin{equation}\label{curl3}
\nabla_l\F_{ij}(\partial_t\F_{lk}+\u\cdot\nabla\F_{lk})=\nabla_l\F_{ij}\nabla_m\u_l\F_{mk}.
\end{equation}

Now, adding \eqref{curl2} and \eqref{curl3}, we deduce that
\begin{equation}\label{curl4}
\begin{split}
\partial_t(\F_{lk}\nabla_l \F_{ij})+\u\cdot\nabla(\F_{lk}\nabla_l
\F_{ij})&=-\F_{lk}\nabla_l\u\cdot\nabla\F_{ij}+\F_{lk}\nabla_m\u_i\nabla_l\F_{mj}\\&\quad+\F_{lk}\nabla_l\nabla_m\u_i
\F_{mj}+\nabla_l\F_{ij}\nabla_m\u_l\F_{mk}\\
&=\F_{lk}\nabla_m\u_i\nabla_l\F_{mj}+\F_{lk}\nabla_l\nabla_m\u_i
\F_{mj}.
\end{split}
\end{equation}
Here, we used the identity which is derived by interchanging the
roles of indices $l$ and $m$:
$$\F_{lk}\nabla_l\u\cdot\nabla\F_{ij}=\F_{lk}\nabla_l\u_m\nabla_m\F_{ij}=\nabla_l\F_{ij}\nabla_m\u_l\F_{mk}.$$
Similarly, one has
\begin{equation}\label{curl5}
\begin{split}
\partial_t(\F_{lj}\nabla_l \F_{ik})+\u\cdot\nabla(\F_{lj}\nabla_l
\F_{ik})=\F_{lj}\nabla_m\u_i\nabla_l\F_{mk}+\F_{lj}\nabla_l\nabla_m\u_i
\F_{mk}.
\end{split}
\end{equation}
Subtracting \eqref{curl5} from \eqref{curl4} yields
\begin{equation}\label{curl6}
\begin{split}
&\partial_t(\F_{lk}\nabla_l \F_{ij}-\F_{lj}\nabla_l
\F_{ik})+\u\cdot\nabla(\F_{lk}\nabla_l \F_{ij}-\F_{lj}\nabla_l
\F_{ik})\\&\quad=\nabla_m\u_i(\F_{lk}\nabla_l\F_{mj}-\F_{lj}\nabla_l\F_{mk})+\nabla_l\nabla_m\u_i
(\F_{mj}\F_{lk}-\F_{mk}\F_{lj}).
\end{split}
\end{equation}
Due to the fact
$$\nabla_l\nabla_m\u_i=\nabla_m\nabla_l\u_i$$ in the sense of distributions, we
have, again by interchanging the roles of indices $l$ and $m$,
\begin{equation*}
\begin{split}
\nabla_l\nabla_m\u_i
(\F_{mj}\F_{lk}-\F_{mk}\F_{lj})&=\nabla_l\nabla_m\u_i
\F_{mj}\F_{lk}-\nabla_l\nabla_m\u_i
\F_{mk}\F_{lj}\\
&=\nabla_l\nabla_m\u_i \F_{mj}\F_{lk}-\nabla_m\nabla_l\u_i
\F_{lk}\F_{mj}\\
&=(\nabla_l\nabla_m\u_i-\nabla_m\nabla_l\u_i)
\F_{lk}\F_{mj}=0.
\end{split}
\end{equation*}
From this identity, equation \eqref{curl6} can be simplified as
\begin{equation}\label{curl7}
\begin{split}
&\partial_t(\F_{lk}\nabla_l \F_{ij}-\F_{lj}\nabla_l
\F_{ik})+\u\cdot\nabla(\F_{lk}\nabla_l \F_{ij}-\F_{lj}\nabla_l
\F_{ik})\\&\quad=\nabla_m\u_i(\F_{lk}\nabla_l\F_{mj}-\F_{lj}\nabla_l\F_{mk}).
\end{split}
\end{equation}
Multiplying \eqref{curl7} by $\F_{lk}\nabla_l
\F_{ij}-\F_{lj}\nabla_l \F_{ik}$, we get
\begin{equation}\label{curl8}
\begin{split}
&\partial_t|\F_{lk}\nabla_l \F_{ij}-\F_{lj}\nabla_l
\F_{ik}|^2+\u\cdot\nabla|\F_{lk}\nabla_l \F_{ij}-\F_{lj}\nabla_l
\F_{ik}|^2\\&\quad=2(\F_{lk}\nabla_l \F_{ij}-\F_{lj}\nabla_l
\F_{ik})\nabla_m\u_i(\F_{lk}\nabla_l\F_{mj}-\F_{lj}\nabla_l\F_{mk})\\
&\quad\le 2\|\nabla\u\|_{L^\infty(\R^3)}\mathcal{M}^2,
\end{split}
\end{equation}
where $\mathcal{M}$ is defined as
$$\mathcal{M}=\max_{i,j,k}\{|\F_{lk}\nabla_l
\F_{ij}-\F_{lj}\nabla_l \F_{ik}|^2\}.$$ Hence, \eqref{curl8}
implies
\begin{equation}\label{curl9}
\partial_t\mathcal{M}+\u\cdot\nabla\mathcal{M}\le
2\|\nabla\u\|_{L^\infty(\R^3)}\mathcal{M}.
\end{equation}

On the other hand, the characteristics of $\partial_t
f+\u\cdot\nabla f=0$ is given by
$$\f{d}{ds}X(s)=\u(s,X(s)),\quad X(t)=x.$$
Hence, \eqref{curl8} can be rewritten as
\begin{equation}\label{curl10}
\f{\partial U}{\partial t}\le B(t,y)U,\quad
U(0,y)=\mathcal{M}_0(y),
\end{equation}
where
$$U(t,y)=\mathcal{M}(t, X(t,x)),\quad
B(t,y)=2\|\nabla\u\|_{L^\infty(\R^3)}(t, X(t,y)).$$
The differential inequality \eqref{curl10} implies that
$$U(t,y)\le U(0)\exp\left(\int_0^t B(s,y)ds\right).$$ Hence,
$$\mathcal{M}(t,x)\le \mathcal{M}(0)\exp\left(\int_0^t
2\|\nabla\u\|_{L^\infty(\R^3)}(s)ds\right).$$ Hence, if
$\mathcal{M}(0)=0$, then $\mathcal{M}(t)=0$ for all $t>0$, and
 the proof of the lemma is complete.
\end{proof}

\begin{Remark}
Lemma \ref{curl} can be interpreted from the physical viewpoint as
follows: formally, the fact that the Lagrangian derivatives
commute and the definition of the deformation gradient imply
$$\partial_{X_k}\widetilde{\F}_{ij}=\f{\partial^2x_i}{\partial X_k\partial X_j}=\f{\partial^2x_i}{\partial X_j\partial X_k}
=\partial_{X_j}\widetilde{\F}_{ik},$$
which is equivalent to, in the Eulerian coordinates,
$$\widetilde{\F}_{lk}\nabla_{l}\F_{ij}(t,x(t,X))=\widetilde{\F}_{lj}\nabla_{l}\F_{ik}(t, x(t,X)), $$
that is,
$$\F_{lk}\nabla_{l}\F_{ij}(t,x)=\F_{lj}\nabla_{l}\F_{ik}(t,x).$$
\end{Remark}

Using $\F=I+E$, \eqref{curl1} means
\begin{equation}\label{11111b}
\nabla_{k}E_{ij}+E_{lk}\nabla_{l}E_{ij}=\nabla_{j}E_{ik}+E_{lj}\nabla_{l}E_{ik}.
\end{equation}
According to \eqref{11111b}, it is natural to assume that the initial condition of $E$ in the viscoelastic fluids system \eqref{e1} should
satisfy the compatibility condition
\begin{equation}\label{11113}
\nabla_{k}E(0)_{ij}+E(0)_{lk}\nabla_{l}E(0)_{ij}=\nabla_{j}E(0)_{ik}+E(0)_{lj}\nabla_{l}E(0)_{ik}.
\end{equation}

Finally, if the density $\r$ is a constant,  then we have the following equations of incompressible
viscoelastic fluids (see \cite{CZ, LLZH2, LLZH, LZ, LLZ, LZP} and references
therein):
\begin{equation}\label{ICV}
\begin{cases}
&\Dv \u=0,\\
&\partial_t\u+\u\cdot\nabla\u-\mu\Delta\u+\nabla P=\Dv(\F\F^\top),\\
&\partial_t\F+\u\cdot\nabla\F=\nabla\u\,\F.
\end{cases}
\end{equation}

For more discussions on viscoelastic fluids  and related models, see \cite{BAH,CM,CD,Gurtin,Joseph,Larson,LLZ, LM,LW,RHN,Slemrod} and the references cited therein.

\section{Main Results}

In this Section, we state our main results.
The standard notations for Sobolev spaces $W^{s, q}$ and Besov spaces $B^s_{pq}$ (\cite{BL}) will be used.
Throughout this paper, the real interpolation method (\cite{BL}) will be adopted
and the following interpolation spaces will be needed
$$\left(L^q(\R^3), W^{2,q}(\R^3)\right)_{1-\f{1}{p},p}=B^{2(1-\f{1}{p})}_{qp},\quad
\left(L^q(\R^3), W^{1,q}(\R^3)\right)_{1-\f{1}{p},p}=B^{1-\f{1}{p}}_{qp}, \quad p, q\ge 1.$$
Now we introduce the following functional spaces to which the solution and initial conditions of the system \eqref{e1} will belong. Given $1\le p, q\le\infty$ and $T>0$, we set $Q_T=\R^3\times(0,T)$, and
$$\mathcal{W}^{p,q}(0,T):=\left\{\u: \u\in W^{1,p}(0,T; (L^q(\R^3))^3)\cap L^p(0,T;(W^{2,q}(\R^3))^3)\right\}$$
with the norm
$$\|\u\|_{\mathcal{W}^{p,q}(0,T)}:=\|\u\|_{W^{1,p}(0,T; L^q(\R^3))}+\|\u\|_{L^p(0,T; W^{2,q}(\R^3))},$$
as well as
$$V_0^{p,q}:=\left(B^{2(1-\f{1}{p})}_{qp}\cap B^{1-\f{1}{p}}_{qp}\right)^3\times \left(W^{1,q}(\R^3)\right)^{10}$$
with the norm
$$\|(f,g)\|_{V_0^{p,q}}:=\|f\|_{B^{2(1-\f{1}{p})}_{qp}}+\|f\|_{B^{1-\f{1}{p}}_{qp}}+\|g\|_{W^{1,q}(\R^3)}.$$
We denote
$$\mathcal{W}(0,T)=\mathcal{W}^{p,q}(0,T)\cap\mathcal{W}^{2,2}(0,T),$$
and
$$V_0=V_0^{p,q}\cap V_0^{2,2}.$$

Our first result is  the following local existence and uniqueness:

\begin{Theorem} \label{T20}
Let $T_0>0$ be given and  $(\u_0, \r_0, E_0)$  $\in  V_0$ with
$p\in [2,\infty), q\in(3,\infty)$. There exists a positive
constant $\dl_0<1$, depending on $T_0$,  $\mu$, and $\lambda$, such that if
\begin{equation}\label{xx1}
\|(\u_0, \r_0-1, E_0)\|_{V_0}\le \dl_0,
\end{equation}
then the initial-value problem \eqref{e1}-\eqref{IC}  has a unique  strong solution on $\R^3\times (0,T_0)$, satisfying
$\u\in \mathcal{W}(0,T_0)$ and
$$(\r-1, E)\in\left(W^{1,p}(0,T_0; L^q(\R^3)\cap L^2(\R^3))\cap L^p(0,T_0; W^{1,q}(\R^3)
\cap W^{1,2}(\R^3))\right)^{10}.$$
\end{Theorem}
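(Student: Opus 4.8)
The plan is to solve \eqref{e1}--\eqref{IC} by a linearized iteration scheme, carrying the $L^p$--$L^q$ and $L^2$ scales in parallel (this is why both the $(p,q)$ and the $(2,2)$ components appear in $V_0$ and $\mathcal{W}(0,T)$). First rewrite the momentum equation, with the help of \eqref{e11}, in the non-conservative form
$$\r(\u_t+\u\cdot\nabla\u)-\mu\Delta\u-(\mu+\lambda)\nabla\Dv\u+\nabla P(\r)=\Dv\big(\r(I+E)(I+E)^\top\big),$$
so that \eqref{e1} becomes a parabolic equation for $\u$ coupled to transport equations (with source) for $\r$ and $E$. Starting from $(\u^0,\r^0,E^0)$ equal to the constant-in-time extension of the data, define $(\u^{n+1},\r^{n+1},E^{n+1})$ successively by: (i) the linear continuity equation $\r^{n+1}_t+\Dv(\r^{n+1}\u^n)=0$, $\r^{n+1}(0)=\r_0$; (ii) the linear transport equation $E^{n+1}_t+\u^n\cdot\nabla E^{n+1}=\nabla\u^n\,E^{n+1}+\nabla\u^n$, $E^{n+1}(0)=E_0$; (iii) the linear Lam\'e problem $\r^{n+1}(\u^{n+1}_t+\u^n\cdot\nabla\u^{n+1})-\mu\Delta\u^{n+1}-(\mu+\lambda)\nabla\Dv\u^{n+1}=-\nabla P(\r^{n+1})+\Dv(\r^{n+1}(I+E^n)(I+E^n)^\top)$, $\u^{n+1}(0)=\u_0$. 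The theorem then reduces to showing this scheme converges to a solution of the required regularity class; this is the fixed-point step.

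Step (iii) is handled by maximal $L^p$--$L^q$ regularity for the strongly elliptic constant-coefficient operator $-\mu\Delta-(\mu+\lambda)\nabla\Dv$ on $\R^3$ (it generates an analytic semigroup on $L^q(\R^3)^3$ and has the maximal regularity property), the remaining terms being treated as perturbations. Since $q>3$ one has $W^{1,q}(\R^3)\hookrightarrow L^\infty(\R^3)$, so the smallness of $\r^{n+1}-1$ in $W^{1,q}$ (propagated from \eqref{xx1} through the transport estimate below) makes $\r^{n+1}$ a small, H\"older-continuous perturbation of $1$; hence $\|(\r^{n+1}-1)\u^{n+1}_t\|_{L^p(0,T_0;L^q)}\le\|\r^{n+1}-1\|_{L^\infty L^\infty}\|\u^{n+1}_t\|_{L^p(0,T_0;L^q)}$ can be absorbed on the left, $\r^{n+1}\u^n\cdot\nabla\u^{n+1}$ is controlled by $\|\u^n\|_{L^\infty(W^{1,q})}\|\nabla\u^{n+1}\|_{L^p(W^{1,q})}$ with a gain in $T_0$ from H\"older in time, and the right-hand side $-P'(\r^{n+1})\nabla\r^{n+1}+(\nabla\r^{n+1})(I+E^n)(I+E^n)^\top+\r^{n+1}\Dv((I+E^n)(I+E^n)^\top)$ lies in $L^p(0,T_0;L^q)$ precisely because $\nabla\r^{n+1},\nabla E^n\in L^p(0,T_0;L^q)$, multiplied by bounded functions. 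This yields $\u^{n+1}\in\mathcal{W}^{p,q}(0,T_0)$; the identical computation in the Hilbert scale gives $\u^{n+1}\in\mathcal{W}^{2,2}(0,T_0)$, hence $\u^{n+1}\in\mathcal{W}(0,T_0)$.

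Steps (i)--(ii) are linear transport problems driven by $\u^n\in\mathcal{W}(0,T_0)$, for which $\nabla\u^n\in L^p(0,T_0;L^\infty)$, so the flow of $\u^n$ is well defined and bi-Lipschitz; differentiating and applying Gronwall in $W^{1,q}\cap W^{1,2}$ yields bounds of the schematic form $\|\r^{n+1}(t)-1\|_{W^{1,q}}\le C\big(\|\r_0-1\|_{W^{1,q}}+\int_0^t\|\r^{n+1}-1\|_{W^{1,q}}\|\nabla\u^n\|_{W^{1,q}}ds\big)\exp\big(\int_0^t\|\nabla\u^n\|_{L^\infty}ds\big)$ and an analogue for $E^{n+1}$, after which $\r^{n+1}_t,E^{n+1}_t$ are read off the equations and land in $W^{1,p}(0,T_0;L^q\cap L^2)$. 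Because $\u^n$ is itself small on the iteration ball, the exponential growth factors and the time-integrated terms are controlled, so $(\r^{n+1}-1,E^{n+1})$ stays small on all of $[0,T_0]$ — this is exactly why $\dl_0$ must be allowed to depend on $T_0$. Combining the three steps one shows by induction that all iterates lie in a ball
$$\mathcal{B}_R=\Big\{(\u,\r,E):\ \|\u\|_{\mathcal{W}(0,T_0)}+\|(\r-1,E)\|_{W^{1,p}(0,T_0;L^q\cap L^2)\cap L^p(0,T_0;W^{1,q}\cap W^{1,2})}\le R\Big\}$$
with $R\sim C\dl_0$, provided $\dl_0=\dl_0(T_0,\mu,\lambda)$ is small enough. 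One then proves $(\u^n,\r^n,E^n)$ is Cauchy, not in the strong norm — differencing the transport equations loses a derivative — but in the weaker metric measuring differences of $(\r,E)$ in $C([0,T_0];L^q\cap L^2)$ and differences of $\u$ in a norm with one spatial derivative less than the solution space; $\mathcal{B}_R$ is complete in this metric, so a contraction/fixed-point argument produces a limit $(\u,\r,E)$ solving \eqref{e1}, and weak lower semicontinuity of the strong norms places it in $\mathcal{B}_R$ with the stated regularity. Uniqueness is a separate argument: for two solutions with the same data, an $L^2$-type energy estimate on the difference $(\r_1-\r_2,\u_1-\u_2,E_1-E_2)$, using the non-conservative momentum equation, the transport structure of the $\r$- and $E$-equations, and the $\mathcal{W}^{2,2}$/$V_0^{2,2}$ regularity to control the nonlinear products, followed by Gronwall, forces the difference to vanish.

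The main obstacle is the mismatch between the parabolic smoothing of $\u$ and the purely hyperbolic behavior of $(\r,E)$: the attendant derivative loss forbids a contraction in the natural solution space, forcing the two-scale structure (uniform bounds in the strong norm, contraction in a weaker one), and the careful bookkeeping of all the nonlinear products across these scales is the delicate part. Subsidiary points are the control of the variable coefficient $\r$ multiplying $\u_t$ — which works only because $\r-1$ is propagated small in $W^{1,q}$ with $q>3$, keeping $\r$ bounded away from $0$ and continuous so that maximal regularity for the Lam\'e operator applies after a small perturbation — and the verification that the elastic source $\Dv(\r(I+E)(I+E)^\top)$ has exactly the $L^p(0,T_0;L^q)$ regularity maximal regularity demands, which holds because products of $\nabla\r,\nabla E\in L^p(L^q)$ with bounded functions stay in $L^p(L^q)$. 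The distinctive feature, as opposed to a short-time large-data result, is that smallness of the data buys existence on the arbitrarily prescribed interval $[0,T_0]$, at the price $\dl_0=\dl_0(T_0)$.
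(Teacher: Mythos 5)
Your proposal is a correct route to Theorem~\ref{T20}, but it is not the route the paper takes, and the difference is substantive. You set up a successive-approximation scheme — linear transport for $\r^{n+1}$ and $E^{n+1}$ driven by $\u^n$, a linear Lam\'e problem for $\u^{n+1}$ driven by $(\u^n,\r^{n+1},E^n)$ — prove uniform bounds in the strong norm on an invariant ball, and then show the iterates converge by a contraction estimate in a weaker norm where the derivative loss of the transport equations is harmless. This is the classical Matsumura--Nishida/Cho--Kim-type scheme for parabolic--hyperbolic systems, and it does work here, provided one observes (as you do) that the ball $\mathcal{B}_R$ is weakly closed and that Gronwall in the weak metric gives an $a_{n+1}(t)\le C\int_0^t a_n\,ds$ recursion whose iterates decay like $C^n t^n/n!$ without needing $T_0$ small. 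The paper instead \emph{avoids contraction altogether}: it first rescales $s=\nu^2 t$, $y=\nu x$, $v=\nu^{-1}\u$ so that the pressure and elastic-stress source terms in the momentum equation acquire a prefactor $\nu^{-2}$, making these (merely linear-in-data) couplings small by choice of $\nu$ rather than by propagating data smallness; it then defines a single map $v\mapsto\mathcal{H}(v)=L^{-1}(v_0,\dots)$ on a ball in $\mathcal{W}^{p,q}$, using your same maximal $L^p$--$L^q$ regularity for the Lam\'e operator and transport lemmas for $\mathcal{S}(v),\mathcal{T}(v)$, and applies the \emph{Tikhonov fixed-point theorem} (Theorem~\ref{T2}), which only requires the map to be weakly continuous on a bounded closed convex set of a reflexive space. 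So the paper never proves any Cauchy/contraction estimate at all — existence drops out of weak compactness — but must in exchange prove uniqueness by a separate $L^2$ energy estimate (Section~5), exactly as your proposal also does.

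The trade-off is worth naming: the Tikhonov route spares one the delicate bookkeeping of which one-derivative-lower norm the iteration actually contracts in (which you correctly flag as the delicate part of your approach), and verifying weak continuity (Lemma~\ref{H2}) is mostly soft compactness. The cost is the scaling device, which is needed precisely because the pressure term $\nabla P(\mathcal{S}(v))$ and the stress divergence $\Dv(\mathcal{S}(v)(I+\mathcal{T}(v))(I+\mathcal{T}(v))^\top)$ are not quadratic in the iterate; the paper's Lemma~\ref{H1} bounds them only by $C(T)$, and the smallness entering $\|\mathcal{H}(v)\|_{\mathcal{W}}\le C(T)(R^2+\nu^{-2})\le R$ comes entirely from the $\nu^{-2}$ factor, not from $\dl_0$. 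Your scheme handles the same difficulty differently: you push the smallness through the transport equations — $\|\nabla\r^{n+1}\|_{L^p L^q}\lesssim T_0^{1/p}\dl_0\exp(CR)$ and similarly for $E^{n+1}$ — so that the source terms in the Lam\'e step end up $O(\dl_0)$ rather than $O(1)\cdot\nu^{-2}$. Both mechanisms close the invariant-ball estimate, but they are genuinely different tricks, and a reader of the paper who follows only your outline would miss the entire rescaling and the appeal to Tikhonov; conversely your iteration-plus-weak-contraction argument is perhaps more portable and avoids introducing the auxiliary parameter $\nu$.

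One small caveat on your write-up: when you say the convection term $\r^{n+1}\u^n\cdot\nabla\u^{n+1}$ is ``controlled by $\|\u^n\|_{L^\infty(W^{1,q})}\|\nabla\u^{n+1}\|_{L^p(W^{1,q})}$ with a gain in $T_0$ from H\"older in time,'' no time gain is in fact needed or available: the embedding $\mathcal{W}^{p,q}(0,T_0)\hookrightarrow C([0,T_0];B^{2-2/p}_{qp})$ gives $\|\u^n\|_{L^\infty L^\infty}\lesssim R$, and then $\|\u^n\cdot\nabla\u^{n+1}\|_{L^pL^q}\lesssim R\|\u^{n+1}\|_{\mathcal{W}^{p,q}}$ is absorbed on the left simply because $R$ is small; this matches the paper's estimate \eqref{8}, which is quadratic in $R$, with no $T$-factor.
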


\medskip

The solutions in Theorem \ref{T20} is local in time since
$\delta_0=\delta_0(T_0)$ implies that $T_0$ is finite for a given
$\delta_0\ll 1$.

\begin{Remark}\label{yy1}
Notice that if $q>3$, then by Theorem 5.15 in \cite{AD}, the
imbedding $W^{1,q}(\R^3)\hookrightarrow C^0_B(\R^3)$ is
continuous. Here, the notation $C_B^0(\R^3)$ means the spaces of
bounded, continuous functions in $\R^3$. Hence the condition
\eqref{xx1} implies that, if we choose $\dl_0$ sufficiently small,
by Sobolev's imbedding theorem, there exists a positive constant
$C_0$ such that
\begin{equation}\label{positive}
\r_0\ge C_0>0, \quad\textrm{for a.e.}\quad x\in\R^3.
\end{equation}
\end{Remark}

\begin{Remark}
An interesting case is the case $q\le p$. Indeed, by the real
interpolation method and Theorem 6.4.4 in \cite{BL}, we have
$$W^{2(1-\f{1}{p}),q}\subset B^{2(1-\f{1}{p})}_{qp},\quad W^{1-\f{1}{p},q}\subset B^{1-\f{1}{p}}_{qp}.$$
Then, if we replace the functional space $V_0^{p,q}$ in Theorem \ref{T20} by
$$\mathcal{V}_0^{p,q}:=\left((W^{2(1-\f{1}{p}),q}(\R^3))^3\cap (W^{1-\f{1}{p}, q}(\R^3))^3\right)\times
(W^{1,q}(\R^3))^{10},$$
Theorem \ref{T20} is still valid.
\end{Remark}

For the solutions claimed in Theorem \ref{T20}, the following
estimates hold uniformly in time.

\begin{Theorem}\label{T1}
Let $\dl_0$ be the same as  in Theorem \ref{T20} and assume
$0<\dl\ll \min\{\f{1}{3},\dl_0\}$.
\smallskip

{\rm (I)} If the initial data satisfies
$\|(\u_0, \r_0-1, E_0)\|_{V_0}\le \dl^2,$ and  $\mu$ and $\lambda$ satisfy the assumption \eqref{AA} below,
then the solution   $(\r, \u, E)$ constructed in Theorem \ref{T20} satisfies
 $$\|\nabla\r\|_{L^\infty(0,T_0; L^q(\R^3))}\le C\sqrt{\dl_0}, \quad
 \|\nabla E\|_{L^p(0,T_0; L^q(\R^3))}\le C\sqrt{\dl_0}.$$

{\rm (II)}
If in addition, we assume $p=2$ and $q\in (3,6]$;
and the initial data satisfies the compatibility condition \eqref{11113} and
\begin{gather}
\Dv(\r_0\F_0^\top)=0, \label{cc} \\
\int_{\R^3}\left(\f{1}{2}\r_0|\u_0|^2+\f{1}{2}\r_0|E_0|^2+\f{1}{\gamma-1}(\r_0^\gamma-\gamma
\r_0+\gamma-1)\right)dx\le\dl^4, \label{hh23} \\
\|\nabla\u_0\|_{L^2}+\|\u_0\cdot\nabla\u_0\|_{L^2}+\|\D\u_0\|_{L^2}+\|\nabla\r_0\|_{L^2}+\|\nabla
E_0\|_{L^2\cap L^q}\le \dl^4. \label{hh}
\end{gather}
Then, the solution   $(\r,\u, E)$ has the following improved estimates:
\begin{equation*}
\begin{cases}
\max_{t\in[0,T]}\max\big\{\|\r-1\|_{W^{1,2}\cap W^{1,q}}(t),
\|\nabla\r\|_{L^2\cap L^q}(t), \|E\|_{W^{1,2}\cap W^{1,q}}(t)\big\}\le C\dl_0,\\
\|\nabla \r\|_{L^2(0,T_0; L^q(\R^3))}\le C\dl_0,\;
\|\partial_t\u\|_{L^2(0,T_0; L^q(\R^3))}\le C\dl_0^{\f{3-\t}{2}},\;
\|\nabla E\|_{L^2(0,T_0; L^q(\R^3))}\le C\dl_0^{\f{3-\t}{2}},
\end{cases}
\end{equation*}
for some $\t\in (\f{1}{2},1]$, where the constant $C$ is independent of
the time and $\dl$.
\end{Theorem}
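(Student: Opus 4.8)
The plan is to prove Theorem~\ref{T1} as a refinement of the energy method, bootstrapping from the basic energy law toward the stated higher-order bounds, treating parts (I) and (II) separately but with a shared strategy: close a nonlinear Gronwall-type estimate on a small time interval using the smallness of the data encoded by $\dl$.

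For part (I), the starting point is the observation made in the introduction that the purely dissipative information for $\u$ (via the strongly elliptic operator $-\mu\D\u-(\mu+\lambda)\na\Dv\u$) does not directly control $\na\r$ or $\na E$. So I would first differentiate the continuity equation \eqref{e11} and the equation \eqref{e13} by $\na$, obtaining transport equations for $\na\r$ and $\na E$ with right-hand sides involving $\na^2\u$, $\na\u\cdot\na\r$, $\na\u\cdot\na E$, etc. Testing these against $|\na\r|^{q-2}\na\r$ and $|\na E|^{q-2}\na E$ respectively (i.e.\ the standard $L^q$ estimate for transport equations) gives
\begin{equation*}
\f{d}{dt}\|\na\r\|_{L^q}\lesssim \|\na\u\|_{L^\infty}\|\na\r\|_{L^q}+\|\na^2\u\|_{L^q},
\end{equation*}
and similarly for $\na E$, but with the crucial structural gain: the ``bad'' top-order term $\na^2\u$ in the $E$-equation is exactly what the parabolic smoothing of $\u$ in $\mathcal{W}^{p,q}$ provides in $L^p(0,T_0;L^q)$. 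The velocity estimate itself comes from viewing \eqref{e12} as a linear parabolic system for $\u$ with forcing $\na P(\r)-\Dv(\r(I+E)(I+E)^\top)$ plus the convective terms, and applying maximal $L^p$-$L^q$ regularity; this is presumably the content of the local-existence construction in Theorem~\ref{T20}, whose a priori bound already gives $\|\u\|_{\mathcal{W}^{p,q}(0,T_0)}\le C\dl_0$. Combining: integrate the $\na\r$ inequality in time and use $\|\na^2\u\|_{L^1(0,T_0;L^q)}\le T_0^{1-1/p}\|\na^2\u\|_{L^p(0,T_0;L^q)}\le C\dl_0$, absorbing the $\|\na\u\|_{L^\infty}\|\na\r\|_{L^q}$ term by Gronwall since $\int_0^{T_0}\|\na\u\|_{L^\infty}\lesssim\dl_0$ is small. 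This yields $\|\na\r\|_{L^\infty(0,T_0;L^q)}\le C(\|\na\r_0\|_{L^q}+\dl_0)\le C\sqrt{\dl_0}$ using $\|\na\r_0\|_{L^q}\le\|( \u_0,\r_0-1,E_0)\|_{V_0}\le\dl^2\ll\sqrt{\dl_0}$. The bound on $\|\na E\|_{L^p(0,T_0;L^q)}$ follows from integrating the $E$-inequality, since its right side is $\lesssim \|\na^2\u\|_{L^p(0,T_0;L^q)}+(\text{small})\|\na E\|_{L^p(0,T_0;L^q)}$. The assumption \eqref{AA} (not shown in the excerpt) presumably fixes a relation between $\mu,\lambda$ guaranteeing the maximal-regularity constant behaves well, and I would simply invoke it.

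For part (II), with $p=2$, $q\in(3,6]$, the additional ingredients are: the physical energy law, the divergence constraint \eqref{cc}$\Rightarrow$\eqref{cc1}, the curl identity \eqref{curl1}/\eqref{11111b}, and the extra initial regularity \eqref{hh}. The strategy is a standard but delicate energy hierarchy. Step one: multiply \eqref{e12} by $\u$, \eqref{e13} by $\r E$ (using \eqref{e11}), integrate, and use the cancellation between the elastic source $\Dv(\r\F\F^\top)$ and the stretching term $\na\u\,\F$ to get the basic law controlling $\int(\tfrac12\r|\u|^2+\tfrac12\r|E|^2+\text{pressure potential})$ by its initial value $\le\dl^4$, plus a dissipation term $\mu\|\na\u\|_{L^2}^2$. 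Step two: the key dissipative mechanism for $E$ and $\na\r$ — as flagged in the introduction, one forms the combination of $\u$ with the convolution $\na\D^{-1}\Dv E$ (the ``combination between the velocity and the convolution of the divergence of the deformation gradient with the fundamental solution of Laplace operator''), differentiates it along the flow, and extracts a coercive term in $\|E\|_{L^2}$ (and, through \eqref{cc1}, in $\|\na\r\|_{L^2}$) at the cost of lower-order errors. Step three: higher-order estimates — test the $\na$-differentiated momentum equation against $\na\u_t$ or $\D\u$, test $\del_t$\eqref{e12} against $\u_t$, and use \eqref{e11}, \eqref{e13} differentiated, all controlled using \eqref{hh23}--\eqref{hh} for the initial layer, the curl identity \eqref{11111b} to trade $\na E$ for $\Dv E$-type quantities, and \eqref{cc1} to trade $\na\r$ for $\na E$-type quantities, each time generating only terms that are either quadratically small (products of two small quantities) or absorbable by the dissipation; closing with Gronwall over $[0,T_0]$ gives the $C\dl_0$ and $C\dl_0^{(3-\t)/2}$ bounds. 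The exponent $\t\in(\tfrac12,1]$ and the fractional power $\dl_0^{(3-\t)/2}$ will emerge from an interpolation: $\|\na E\|_{L^q}$ for $q\in(3,6]$ interpolates between $L^2$ and $W^{1,2}$ (Gagliardo--Nirenberg/Sobolev in $\R^3$), and the time integrability in $L^2(0,T_0)$ combined with the $\dl^4$ initial smallness produces a power strictly between $\dl_0$ and $\dl_0^{3/2}$.

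The main obstacle, and where I expect the real work to lie, is Step two of part (II): establishing the dissipative estimate for $E$ and $\na\r$ via the velocity-plus-convolution combination at the low regularity level ($q$ finite, $p=2$, no $H^3$ bounds). The nonlocal operator $\na\D^{-1}\Dv$ is only an $L^q$-bounded Calderón--Zygmund operator, not $L^\infty$ or $L^1$, so the cross terms produced when commuting it past the transport operator $\del_t+\u\cdot\na$ — schematically $[\na\D^{-1}\Dv,\,\u\cdot\na]E$ — must be estimated carefully, and one has to verify that $\na\D^{-1}\Dv E$ makes sense and decays (this is where \eqref{cc1} is essential, since $\Dv(\r\F^\top)=0$ means $\Dv E$ is of the form $-\Dv((\r-1)(I+E)^\top)$, a product of small quantities, so the convolution term is genuinely a higher-order correction). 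Getting all these error terms to be controlled by $\dl$ times the dissipation, uniformly in $T_0$, is the crux; everything else is a (lengthy) bookkeeping exercise in energy estimates combined with Sobolev embedding in $\R^3$ and the constitutive identities already proved.
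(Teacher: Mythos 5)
Your outline for Part~(I) takes a genuinely different, more direct route than the paper: you propose to differentiate the transport equations for $\r$ and $E$, test against $|\na\r|^{q-2}\na\r$ and $|\na E|^{q-2}\na E$, and close via Gronwall using the smallness of $\int_0^{T_0}\|\na\u\|_{L^\infty}\,dt$ and of $\|\na^2\u\|_{L^p(0,T_0;L^q)}\le\dl_0$ coming from the fixed-point ball. On a fixed time window $(0,T_0)$ that reasoning does reach the stated $\sqrt{\dl_0}$ bounds, modulo a careful handling of the circularity between $\|E\|_{L^\infty}$ and $\|\na E\|_{L^q}$ (which the paper resolves by a bootstrap assumption $\|\T(\u)\|_{W^{1,q}}\le\sqrt{R}$). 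However, the paper's actual proof of Part~(I) is built around two lemmas with a different structure: Lemma~\ref{E1} introduces $Z=\u-\mathcal{L}(\Dv E)$ with $\mathcal{L}=(-\mu\D-(\mu+\lambda)\na\Dv)^{-1}$ and applies maximal $L^p$--$L^q$ regularity to $Z$ to produce $\|\na E\|_{L^p L^q}\lesssim R+\sqrt{R}\,\|\sigma\|_{L^p L^q}$; and Lemma~\ref{E2} tests the momentum equation against $\sigma|\sigma|^{q-2}$ (with $\sigma=\na\ln\r$) to extract a genuine damping term $\int\r P'(\r)|\sigma|^q$ from the pressure, yielding a time-\emph{uniform} bound $\|\sigma\|_{L^p(0,t;L^q)}\le CR$. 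That pressure coercivity is what the authors are after for the global-existence program (Remark~\ref{QQ9}); your Gronwall route produces constants with a $T_0^{1-1/p}$ dependence and forgoes that structure. The assumption~\eqref{AA} is not merely a maximal-regularity normalization; it is a smallness condition $C(p,q,\mu,\lambda)\mu\alpha<1$ with $\alpha=\sup\{xP'(x)\}$, needed to absorb the $\na E$ term on the right-hand side of \eqref{5111}.

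For Part~(II) there is a concrete misattribution. You assign the ``velocity plus convolution'' combination to Step~two of Part~(II), as the route to dissipation of $E$ and $\na\r$; but in the paper that combination ($Z=\u-\mathcal{L}(\Dv E)$) is the key to Part~(I), and Part~(II) uses entirely different mechanisms. The actual Part~(II) argument proceeds by: (i) the energy law \eqref{w5} giving $L^2$-in-time dissipation of $\na\u$ and $L^\infty L^2$ bounds on $\u,E,\r-1$; (ii) taking the \emph{divergence} of the momentum equation and observing $\Dv\Dv(\r E)=-\D\r$ from the constraint \eqref{cc1}, so that $\D P(\r)+\D\r$ is controlled by commutators and forcing, giving $\|\na\r\|_{L^2 L^q}\le CR$ by elliptic $L^q$-theory (Lemma~\ref{rrr}); (iii) testing the momentum equation first against $\partial_t\u$ and then its time-derivative against $\partial_t\u$, with the initial-layer control coming from \eqref{hh}, to get $\|\na\partial_t\u\|_{L^2(Q_T)}\lesssim R^{3/2}$ and then $\|\partial_t\u\|_{L^2 L^q}\lesssim R^{(3-\t)/2}$ by the Gagliardo--Nirenberg interpolation $\|f\|_{L^q}\lesssim\|f\|_{L^2}^{\t}\|\na f\|_{L^2}^{1-\t}$ applied to $\partial_t\u$, not to $\na E$ (Lemma~\ref{EEE3}); and (iv) substituting $\partial_t\Dv E=\Dv(-\u\cdot\na E+\na\u\,E)+\D\u$ into the momentum equation, testing against $|\Dv E|^{q-2}\Dv E$, and using the curl identity \eqref{11112} to upgrade $\Dv E$ to $\na E$, yielding $\|\na E\|_{L^2 L^q}\lesssim R^{(3-\t)/2}$ (Lemma~\ref{EEE2}). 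So the exponent $\t$ emerges from the interpolation of $\partial_t\u$ between $L^2$ and $\dot H^1$ and is then transferred to $\na E$ through the equation, not produced by interpolating $\na E$ itself. You flag Step~two as ``where the real work lies'' and acknowledge you are hand-waving the commutator analysis for $\na\D^{-1}\Dv$; in fact that machinery is not needed here. What is needed -- and what your outline does not supply -- is the divergence-of-momentum / elliptic argument for $\na\r$, the two-tier $\partial_t\u$ energy estimates using \eqref{hh}, and the $\Dv E$-substitution argument for $\na E$.
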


\begin{Remark}\label{QQ}
Under  assumption \eqref{cc}, the authors in \cite{LLZH31, LZ}
showed that the property will insist in time, that is, for all
$t\ge 0$,
$\Dv(\r\F^\top)=0.$
\end{Remark}

\begin{Remark}\label{QQ9}
It is remarkable to point out that if we could obtain one better
estimate on \\$\|\nabla \r\|_{L^2(0,T; L^q(\R^3))}$, say
$\|\nabla \r\|_{L^2(0,T; L^q(\R^3))}\le C\dl_0^{1+\alpha}$ for
some $\alpha>0$, then one could extend the local existence
in Theorem \ref{T20} to the global existence for $t\in[0,\infty)$.
\end{Remark}

\section{Local Existence} \label{local}

In this section, we  prove the local existence of strong solution in Theorem \ref{T20} using a fixed-point argument.  To this end,
 we introduce the following new variables by scaling
$$s:=\nu^2 t,\quad y:=\nu x, \quad v(y,s):=\frac{1}{\nu}\u(x,t),\quad r(y,s):=\r(x,t),\quad G(y,s):=E(x,t),$$
where $\nu>0$ will be determined later. Then, system \eqref{e1}
becomes
\begin{subequations} \label{e3}
\begin{align}
&r_t +\Dv(r v)=0,\label{e31}\\
&(rv)_t+\Dv\left(rv\otimes v\right)-\mu\D v-(\mu+\lambda)\nabla\Dv v+\nu^{-2}\nabla P=\nu^{-2}\Dv\left(r (I+G)(I+G)^\top\right),\label{e32}\\
&G_t+v\cdot\nabla G=\nabla v G+\nabla v,\label{e33}.
\end{align}
\end{subequations}
This scaling is needed in order to apply a fixed-point argument.
From \eqref{11111b},  one has
\begin{equation} \label{11111}
\nabla_{k}G_{ij}+G_{lk}\nabla_{l}G_{ij}=\nabla_{j}G_{ik}+G_{nj}\nabla_{n}G_{ik}.
\end{equation}
Thus, if we denote by $G_i$ the $i$-th row of the matrix $G$ (or the $i$-th component of the vector $G$), then \eqref{11111} becomes
\begin{equation}\label{11112}
\textrm{curl } G_i=G_{nj}\nabla_{n}G_{ik}-G_{lk}\nabla_{l}G_{ij}.
\end{equation}

The proof of local existence of strong solutions with small
initial data will be carried out  through three steps by using a
fixed point theorem. Instead of working on \eqref{e1} directly, we
will work on  \eqref{e3}.  We note that \eqref{e3} is just a
scaling version of \eqref{e1}. It can be seen from the argument
below that we only need to verify the local existence in
$\mathcal{W}^{p,q}(0,T)$, $0<T\le T_0$,  while the initial data
belongs to $V_0^{p,q}$.

\subsection{Solvability of the density with a fixed velocity}
Let $A_j(x, t)$, $j=1, ..., n$,  be symmetric $m\times m$ matrices in $\R^n\times(0,T)$,  $B(x,t)$ an $m\times m$ matrix, $f(x,t)$ and $V_0(x)$ be $m$-dimensional vector functions defined in $\R^n\times(0,T)$ and $\R^n$, respectively.

For the following initial-value problem:
\begin{equation}\label{1000}
\begin{cases}
&\displaystyle \partial_tV+\sum_{j=1}^nA_j(x,t)\partial_jV+B(x,t)V=f(x,t),\\
&V(x,0)=V_0(x),
\end{cases}
\end{equation}
we have

\begin{Lemma}\label{l1}
Assume that $$A_j\in \left[C(0,T; H^s(\R^n))\cap C^1(0,T; H^{s-1}(\R^{n}))\right]^{m\times m}, \; j=1,...,n,$$
$$B\in C((0,T), H^{s-1}(\R^n))^{m\times m},\quad f\in C((0,T), H^s(\R^n))^m,\quad V_0\in H^s(\R^n)^m,$$
with $s>\f{n}{2}+1$ is an integer. Then there exists a unique solution to \eqref{1000}, i.e, a function
$$V\in \left[C([0,T), H^s(\R^n))\cap C^1((0,T), H^{s-1}(\R^n))\right]^m$$
satisfying \eqref{1000} pointwise.
\end{Lemma}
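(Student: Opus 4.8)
\emph{Proof proposal.} This is the classical well-posedness theorem for linear first-order symmetric hyperbolic systems (Friedrichs, Kato), and I would establish it by the energy method together with a Friedrichs regularization, in three stages: (i) the basic $L^2$ and $H^s$ energy estimates (which already give uniqueness); (ii) construction of a solution by regularizing the operator and passing to the limit; (iii) recovery of the stated time regularity. Since $s$ is an integer, I work throughout with the integer-order derivatives $\partial^\alpha$, $|\alpha|\le s$, and I freely use that $s>\tfrac n2+1$ forces $H^{s-1}(\R^n)$ to be a Banach algebra and $A_j,\nabla A_j,B\in L^\infty(\R^n)$ with norms continuous in $t$.

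Step (i). Pairing \eqref{1000} with $V$ in $L^2$ and using the symmetry of the $A_j$, one has $\langle A_j\partial_jV,V\rangle=-\tfrac12\langle(\partial_jA_j)V,V\rangle$, hence $\tfrac{d}{dt}\|V\|_{L^2}^2\le C(t)\|V\|_{L^2}^2$ with $C\in L^1(0,T)$; applied to the difference of two solutions this gives uniqueness. For the higher estimate, apply $\partial^\alpha$ to \eqref{1000}, pair with $\partial^\alpha V$, and control the commutators $[\partial^\alpha,A_j\partial_j]V$ and $[\partial^\alpha,B]V$ by the standard Moser-type product and commutator inequalities; summing over $|\alpha|\le s$ yields
\[
\frac{d}{dt}\|V(t)\|_{H^s}^2\le C\,\Phi(t)\,\|V(t)\|_{H^s}^2+C\|f(t)\|_{H^s}^2,\qquad \Phi(t):=1+\sum_j\|A_j(t)\|_{H^s}+\|B(t)\|_{H^{s-1}},
\]
and Gronwall's inequality then bounds $\|V(t)\|_{H^s}$ in terms of $\|V_0\|_{H^s}$, $\int_0^t\|f\|_{H^s}^2$, and $\int_0^t\Phi$.

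Step (ii). Replace $A_j\partial_j$ in \eqref{1000} by the mollified operator $J_\varepsilon A_j\,\partial_jJ_\varepsilon$ (equivalently, add $\varepsilon\Delta V$ to the right-hand side). For each $t$ this is a bounded linear map on $H^s(\R^n)^m$ depending continuously on $t$, so by the Picard existence theorem in the Banach space $H^s(\R^n)^m$ there is a unique $V^\varepsilon\in C^1([0,T);H^s)$. The estimate of Step (i) applies uniformly in $\varepsilon$ (the $J_\varepsilon$ are self-adjoint and their commutators are uniformly bounded), giving bounds for $V^\varepsilon$ in $L^\infty_{\rm loc}([0,T);H^s)$ and, via the equation, for $\partial_tV^\varepsilon$ in $L^\infty_{\rm loc}([0,T);H^{s-1})$. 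An $L^2$ energy estimate for $V^\varepsilon-V^{\varepsilon'}$, in which the mollifier commutators are $o(1)$ as $\varepsilon,\varepsilon'\to0$, shows $\{V^\varepsilon\}$ is Cauchy in $C([0,T'];L^2)$ for every $T'<T$; interpolating with the uniform $H^s$ bound gives convergence in $C([0,T'];H^{s'})$ for all $s'<s$ to a limit $V$ with $V\in L^\infty_{\rm loc}([0,T);H^s)$ and $\partial_tV\in L^\infty_{\rm loc}([0,T);H^{s-1})$, and passing to the limit in the regularized equation shows $V$ solves \eqref{1000}.

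Step (iii) and the main obstacle. From $\partial_tV=f-\sum_jA_j\partial_jV-BV$ the right-hand side lies in $L^\infty_{\rm loc}([0,T);H^{s-1})$, so $V$ is locally Lipschitz, in particular continuous, into $H^{s-1}$; the remaining point is to upgrade $V$ from $C_w([0,T);H^s)\cap\bigcap_{s'<s}C([0,T);H^{s'})$ to genuine continuity into $H^s$, which fed back into the equation gives $\partial_tV\in C((0,T);H^{s-1})$ and hence $V\in C^1((0,T);H^{s-1})$, so that \eqref{1000} holds pointwise. I would obtain this either from uniform convergence of the regularized solutions, or by showing $t\mapsto\|V(t)\|_{H^s}$ is continuous — because the energy identity underlying Step (i), read as an equality on the $V^\varepsilon$ and passed to the limit, expresses $\tfrac{d}{dt}\|V(t)\|_{H^s}^2$ through quantities continuous in $t$ — and then invoking that weak continuity plus norm continuity imply strong continuity in the Hilbert space $H^s$. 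This last step is the delicate one: since $A_j,B$ are only continuous (not Lipschitz) in time, one cannot differentiate \eqref{1000} in $t$, so full $C([0,T);H^s)$ regularity must be extracted through this norm-continuity/regularization argument rather than by a direct bootstrap, and it is precisely here that the threshold $s>\tfrac n2+1$ and the sharp form of the Moser estimates are needed.
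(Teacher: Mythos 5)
Your proposal is mathematically sound; it is the classical Friedrichs--Kato energy-method proof of well-posedness for linear symmetric hyperbolic systems, and the three stages (commutator-based $H^s$ energy estimate giving uniqueness, Friedrichs mollification plus Picard in $H^s$ plus a Cauchy argument in a weaker norm, and then the delicate upgrade from $C_w\cap\bigcap_{s'<s}C(\cdot;H^{s'})$ to $C([0,T);H^s)$ via norm continuity in a Hilbert space) are exactly what that theorem requires. The paper, by contrast, does not prove the lemma at all: it cites Theorem~2.16 of Novotn\'y--Stra\v{s}kraba \cite{AI} with $A_0(x,t)=I$ and stops. So there is no conflict of method -- the paper's one-line proof is a pointer to the same Kato-type result whose proof you reconstructed. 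The trade-off is that the paper's route is immediate but opaque, while yours is self-contained and, usefully, isolates the one genuinely subtle point (that $A_j,B$ being merely continuous in $t$ blocks a direct $t$-differentiation bootstrap, so strong $H^s$-continuity must come from the weak-continuity-plus-norm-continuity argument or from uniform convergence of the mollified solutions). One small remark: in Step~(ii) you offer the viscous approximation ``add $\varepsilon\Delta V$'' as an equivalent alternative to the Friedrichs mollification $J_\varepsilon A_j\partial_j J_\varepsilon$; these are not interchangeable as stated, since the parabolic regularization changes the type of the equation and requires its own (Duhamel/semigroup) existence step rather than Picard in $H^s$, whereas the $J_\varepsilon$-mollification keeps the problem a bounded ODE in $H^s$, which is what your Picard appeal actually needs. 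Staying with the mollified operator is the clean choice.
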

\begin{proof}
This lemma is a direct consequence of Theorem 2.16 in \cite{AI} with $A_0(x,t)=I$.
\end{proof}

To solve the density with respect to the fixed velocity $v\in \mathcal{W}(0,T)$, we have

\begin{Lemma}\label{r}
Under the same conditions as Theorem \ref{T20}, there is a unique strictly positive function
$$r:={\mathcal S}(v)\in W^{1,p}(0,T; L^q(\R^3))\cap L^\infty(0,T; W^{1,q}(\R^3))$$
which satisfies the continuity equation \eqref{e31} and $r-1\in L^\infty(0,T; L^q(\R^3))$. Moreover, the density satisfies
\begin{equation}\label{y12}
\|\nabla r\|_{L^\infty(0,T; L^q(\R^3))}\le C(T, \|v\|_{\mathcal{W}(0,T)})\left(\|\nabla r_0\|_{L^q(\R^3)}+1\right),
\end{equation}
and the norm $\|{\mathcal S}(v)-1\|_{W^{1,q}(\R^3)}(t)$ is a continuous function in time.
\end{Lemma}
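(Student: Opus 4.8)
The plan is to solve the linear transport equation \eqref{e31} for $r$ with $v$ fixed, then differentiate it to control $\nabla r$. Since $v\in\mathcal{W}(0,T)$ gives $v\in L^p(0,T;W^{2,q})$ with $q>3$, we have $\nabla v\in L^p(0,T;W^{1,q})\hookrightarrow L^p(0,T;C_B^0)$ and in particular $\Dv v\in L^1(0,T;L^\infty)$ and $v\in L^1(0,T;W^{1,\infty})$. First I would construct $r$ by the method of characteristics (or, to stay in the framework of the paper, by the smoothing/approximation argument behind Lemma 4.2: regularize $v$ in space, solve the symmetric-hyperbolic system \eqref{1000} with $m=1$, $A_j = v_j$, $B = \Dv v$, $f=0$, and pass to the limit, using the uniform bounds below to extract convergence). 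Writing $r(t,x) = r_0(X_0(t,x))\exp\!\big(-\int_0^t (\Dv v)(s, X_s(t,x))\,ds\big)$ along the flow $X_s$ of $v$, strict positivity of $r$ is immediate from $r_0 \ge C_0 > 0$ (Remark 3.3) and the bound $\int_0^T\|\Dv v(s)\|_{L^\infty}\,ds \le C(T,\|v\|_{\mathcal{W}(0,T)})$; this also gives $\|r\|_{L^\infty(0,T;L^\infty)} \le C$. For $r-1 \in L^\infty(0,T;L^q)$, subtract $1$ and note $(r-1)_t + v\cdot\nabla(r-1) = -(r-1)\Dv v - \Dv v$, so a Gronwall argument in $L^q$ (using $\|\Dv v\|_{L^1_tL^\infty_x}$ to absorb the linear term and $\|\Dv v\|_{L^1_tL^q_x}$ for the inhomogeneity) closes, starting from $\|r_0-1\|_{L^q} \le \delta_0$.

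Next I would derive \eqref{y12}. Differentiating \eqref{e31} gives
\begin{equation*}
\partial_t \nabla r + v\cdot\nabla(\nabla r) = -\nabla v\cdot\nabla r - \nabla r\,\Dv v - r\,\nabla\Dv v .
\end{equation*}
Testing against $|\nabla r|^{q-2}\nabla r$ and integrating, the transport term $\int v\cdot\nabla(\nabla r)\,|\nabla r|^{q-2}\nabla r = \tfrac1q\int v\cdot\nabla|\nabla r|^q = -\tfrac1q\int \Dv v\,|\nabla r|^q$ is controlled by $\|\Dv v\|_{L^\infty}\|\nabla r\|_{L^q}^q$; the first two right-hand terms are bounded by $C\|\nabla v\|_{L^\infty}\|\nabla r\|_{L^q}^q$; and the last term contributes $\|r\|_{L^\infty}\|\nabla\Dv v\|_{L^q}\|\nabla r\|_{L^q}^{q-1}$. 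Dividing by $\|\nabla r\|_{L^q}^{q-1}$ yields
\begin{equation*}
\frac{d}{dt}\|\nabla r\|_{L^q} \le C\|\nabla v\|_{L^\infty}\|\nabla r\|_{L^q} + C\|r\|_{L^\infty}\|\nabla\Dv v\|_{L^q},
\end{equation*}
and Gronwall's inequality together with $\int_0^T\|\nabla v(s)\|_{W^{1,q}}\,ds \le T^{1-1/p}\|v\|_{\mathcal{W}^{p,q}(0,T)}$ (Hölder in time) gives \eqref{y12} with a constant of the stated form. The same computation, applied to the time-mollified problem and then passed to the limit by lower semicontinuity of the norm, makes this rigorous and simultaneously identifies $r_t = -\Dv(rv) = -v\cdot\nabla r - r\Dv v \in L^p(0,T;L^q)$ since $v\cdot\nabla r \in L^\infty_tL^q_x$ and $r\Dv v \in L^p_tL^q_x$.

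For the final assertion — that $t \mapsto \|\mathcal{S}(v)-1\|_{W^{1,q}}(t)$ is continuous — I would argue that $r-1 \in C([0,T];L^q)$ and $\nabla r \in C([0,T];L^q)$. Weak-$*$ continuity into $W^{1,q}$ follows from $r-1 \in W^{1,p}(0,T;L^q)\hookrightarrow C([0,T];L^q)$ and the bound \eqref{y12} giving $\nabla r \in C_w([0,T];L^q)$ together with the equation $\partial_t\nabla r \in L^1(0,T; L^q + W^{-1,q})$-type bounds; strong continuity of $\|\nabla r(t)\|_{L^q}$ then comes from the ODE inequality above applied on both sides (forward and backward in time), which controls the modulus of continuity of $\|\nabla r(t)\|_{L^q}$ directly — one gets $\big|\,\|\nabla r(t)\|_{L^q} - \|\nabla r(t')\|_{L^q}\big| \le \int_{t'}^t\big(C\|\nabla v\|_{L^\infty}\|\nabla r\|_{L^q} + C\|r\|_{L^\infty}\|\nabla\Dv v\|_{L^q}\big)\,ds \to 0$ as $t'\to t$. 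I expect the main obstacle to be the rigorous justification of the $L^q$ energy estimate for $\nabla r$ when $v$ has only $W^{2,q}$ spatial regularity (not smooth): the formal testing must be carried out on a regularized problem and stability under the regularization must be checked, which is where DiPerna–Lions-type commutator control (or simply the explicit characteristic formula, since $v$ is Lipschitz in $x$ here) does the work; everything else is routine Gronwall and Hölder bookkeeping.
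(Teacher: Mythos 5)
Your proposal is correct and follows essentially the same route as the paper: existence via regularization of the linear transport equation (the paper defers to Theorem 9.3 of Novotn\'y--Stra\v{s}kraba and to the proof of the companion Lemma for $G$), positivity by integrating the continuity equation along characteristics with Gronwall, the estimate \eqref{y12} by differentiating \eqref{e31}, testing against $|\nabla r|^{q-2}\nabla r$, and applying Gronwall, and continuity of $\|\nabla r\|_{L^q}$ from $\tfrac{d}{dt}\|\nabla r\|_{L^q}^q\in L^1(0,T)$ together with $\partial_t(r-1)\in L^p(0,T;L^q)$. The only difference is that you spell out the regularization/commutator issue (DiPerna--Lions or the explicit characteristic formula) that the paper leaves implicit by citation, which is a reasonable addition but does not change the argument.
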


Here, and in what follows,  $C$ stands for a generic positive constant, and in some case,
we will specify its dependence on parameters by the
notation $C(\cdot)$.

\begin{proof}
For the proof of the first part of this lemma, we refer the   reader to Theorem 9.3 in \cite{AI}, or the first
part of the proof for Lemma \ref{g} below.
The positivity of density follows directly from the observations:  by writing \eqref{e31} along characteristics $\f{d}{dt}X(t)=v,$
$$\f{d}{dt}r(t,X(t))=-r(t, X(t))\Dv v(t,X(t)), \quad X(0)=x,$$
and with the help of Gronwall's inequality,
\begin{equation*}
\begin{split}
(\inf_{x}\r_0)\textrm{exp}&\left(-\int_0^t\|\Dv v(t)\|_{L^\infty(\R^3)}dx\right)\le r(t,x)
\le(\sup_{x}\r_0)
\textrm{exp}\left(\int_0^t\|\Dv v(t)\|_{L^\infty(\R^3)}dx\right).
\end{split}
\end{equation*}

Now, we can assume that the continuity equation holds pointwise  in the following form:
$$\partial_t r+r\Dv v+v\cdot \nabla r=0.$$
Taking the gradient in both sides of the above identity, multiplying by $|\nabla r|^{q-2}\nabla r$ and then integrating over $\R^3$,
we get, by Young's inequality
\begin{equation}\label{y11}
\begin{split}
\f{1}{q}\f{d}{dt}\|\nabla r\|^q_{L^q(\R^3)}&\le \int_{\R^3}|\nabla r|^q|\Dv v|dx+\int_{\R^3} r|\nabla r|^{q-1}
|\nabla\Dv v|dx\\&\qquad+\int_{\R^3}|\nabla v||\nabla r|^qdx-\f{1}{q}
\int_{\R^3} v\nabla|\nabla r|^qdx\\
&\le\|\nabla r\|^q_{L^q(\R^3)}\left(\|\nabla v\|_{L^\infty(\R^3)}+\|r\|_{L^\infty(\R^3)}\|\nabla\Dv v\|_{L^q(\R^3)}\right)\\
&\quad+\f{1}{q}\int_{\R^3}\Dv v|\nabla r|^qdx
+\|r\|_{L^\infty(\R^3)}\|\nabla\Dv v\|_{L^q(\R^3)}\\
&\le C\|\nabla r\|^q_{L^q(\R^3)}\|v\|_{W^{2,q}(\R^3)}+\|r\|_{L^\infty(\R^3)}\|\nabla\Dv v\|_{L^q(\R^3)},
\end{split}
\end{equation}
since $q>3$. Then \eqref{y12} follows from  Gronwall's inequality.

Finally, noting from \eqref{y11} and \eqref{y12} that $\f{d}{dt}\|\nabla r\|_{L^q(\R^3)}^q\in L^1(0,T)$,
and hence $$\f{d}{dt}\|\nabla (r-1)\|_{L^q(\R^3)}^q\in L^1(0,T),$$
which together  with \eqref{y12} implies that
$\|\nabla (r-1)\|^q_{L^q(\R^3)}(t)$ is continuous in time, and hence, $\|\nabla (r-1)\|_{L^q(\R^3)}(t)$ is continuous in time.
Similarly, from the continuity
equation, we know that
$$\partial_t (r-1)=-\Dv((r-1)v)-\Dv v\in L^p(0,T;L^q(\R^3)),$$
which, together with the fact $r-1\in L^\infty(0,T; L^q(\R^3))$,
yields $r-1\in C([0,T]; L^q(\R^3))$. Hence, the
quantity $\|r-1\|_{W^{1,q}(\R^3)}(t)$ is continuous in time.
The proof of Lemma \ref{r} is complete.
\end{proof}

\subsection{Solvability of the deformation gradient with a fixed velocity}\label{gradient}

Due to the hyperbolic structure of \eqref{e33}, we can apply Lemma \ref{l1} again to solve the deformation
gradient $G$ in terms of the given velocity $v\in \mathcal{W}(0,T)$. For this purpose, we have

\begin{Lemma}\label{g}
Under the same conditions as Theorem \ref{T20}, there is a unique function
$$G:={\mathcal T}(v)\in W^{1,p}(0,T; L^q(\R^3))\cap L^\infty(0,T; W^{1,q}(\R^3))$$
which satisfies the equation \eqref{e33}. Moreover, the deformation gradient satisfies
\begin{equation}\label{y31b}
\|\nabla G\|_{L^\infty(0,T; L^q(\R^3))}\le C(T,\|v\|_{\mathcal{W}(0,T)})\left(\|\nabla G(0)\|_{L^q(\R^3)}+1\right),
\end{equation}
and, the norm  $\|G\|_{W^{1,q}(\R^3)}(t)$ is a continuous function in time.
\end{Lemma}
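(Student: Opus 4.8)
The plan is to regard \eqref{e33} as a linear symmetric hyperbolic system for the vectorized unknown $G$ with prescribed coefficient field $v\in\mathcal{W}(0,T)$, and to construct the solution as the limit of smooth approximations. Collecting the entries $G_{ij}$ into a vector, \eqref{e33} takes the form \eqref{1000} with $A_k=v_kI$ (symmetric, being a scalar multiple of the identity), with $B$ the zeroth-order map $G\mapsto-\nabla v\,G$, and with source $f=\nabla v$. Since a field in $\mathcal{W}(0,T)$ is not regular enough to feed directly into Lemma \ref{l1}, the first step is to mollify in space and time, obtaining $v^\varepsilon\to v$ in $\mathcal{W}(0,T)$, and to approximate $G(0)$ by smooth data $G_0^\varepsilon\to G(0)$ in $W^{1,q}(\R^3)$; for each fixed $\varepsilon>0$ Lemma \ref{l1} then produces a unique smooth solution $G^\varepsilon$ on $[0,T]$.

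The second, and main, step is to derive bounds on $G^\varepsilon$ uniform in $\varepsilon$. Writing \eqref{e33} along the characteristics $\f{d}{dt}X(t)=v^\varepsilon$ gives $\f{d}{dt}G^\varepsilon(t,X(t))=\nabla v^\varepsilon G^\varepsilon+\nabla v^\varepsilon$; multiplying by $|G^\varepsilon|^{q-2}G^\varepsilon$, integrating over $\R^3$, handling the transport term by integration by parts against $\Dv v^\varepsilon$, and using Young's inequality exactly as in the proof of Lemma \ref{r}, one controls $\|G^\varepsilon\|_{L^\infty(0,T;L^q(\R^3))}$ by $C(T,\|v\|_{\mathcal{W}(0,T)})(\|G(0)\|_{L^q}+1)$. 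Differentiating \eqref{e33} in $x$ and pairing with $|\nabla G^\varepsilon|^{q-2}\nabla G^\varepsilon$ produces, besides the terms already met in the density estimate, the new contribution coming from $\nabla(\nabla v^\varepsilon G^\varepsilon)=\nabla^2 v^\varepsilon G^\varepsilon+\nabla v^\varepsilon\nabla G^\varepsilon$; since $q>3$ one estimates $\int_{\R^3}|\nabla^2 v^\varepsilon|\,|G^\varepsilon|\,|\nabla G^\varepsilon|^{q-1}\,dx\le\|\nabla^2 v^\varepsilon\|_{L^q}\|G^\varepsilon\|_{L^\infty}\|\nabla G^\varepsilon\|_{L^q}^{q-1}$, and then, invoking the embedding $W^{1,q}(\R^3)\hookrightarrow L^\infty(\R^3)$ together with Young's inequality, bounds it by $C\|v\|_{W^{2,q}}\big(\|\nabla G^\varepsilon\|_{L^q}^q+\|G^\varepsilon\|_{L^q}^q\big)$. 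Combined with the $L^q$ bound on $G^\varepsilon$ already obtained, Gronwall's inequality gives \eqref{y31b} uniformly in $\varepsilon$, and a uniform bound $\partial_t G^\varepsilon\in L^p(0,T;L^q(\R^3))$ follows straight from the equation.

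The third step is to pass to the limit and conclude. Weak-$*$ compactness extracts $G^\varepsilon\rightharpoonup G$ with $G\in L^\infty(0,T;W^{1,q}(\R^3))$ and $\partial_t G\in L^p(0,T;L^q(\R^3))$; an Aubin--Lions type compactness argument gives strong $L^p$-convergence of $G^\varepsilon$ on compact sets, which suffices to pass to the limit in the products $v^\varepsilon\cdot\nabla G^\varepsilon$ and $\nabla v^\varepsilon G^\varepsilon$, so $G=\mathcal{T}(v)$ satisfies \eqref{e33} almost everywhere. Uniqueness is the energy estimate for the difference: two solutions with the same data and the same $v$ differ by a field solving $(G^1-G^2)_t+v\cdot\nabla(G^1-G^2)=\nabla v\,(G^1-G^2)$ with zero initial value, so pairing with $|G^1-G^2|^{q-2}(G^1-G^2)$ and applying Gronwall forces $G^1\equiv G^2$. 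Finally, as in Lemma \ref{r}, the differentiated estimate shows $\f{d}{dt}\|\nabla G\|_{L^q}^q\in L^1(0,T)$, hence $\|\nabla G\|_{L^q}(t)$ is continuous; and $G\in L^\infty(0,T;L^q)$ with $\partial_t G\in L^p(0,T;L^q)$ gives $G\in C([0,T];L^q)$, so $\|G\|_{W^{1,q}(\R^3)}(t)$ is continuous in time.

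The delicate point I anticipate is the a priori estimate for $\nabla G$: the term $\nabla^2 v\,G$ has no counterpart in the density equation, and controlling it requires both the restriction $q>3$ (through $W^{1,q}\hookrightarrow L^\infty$) and enough bookkeeping that the resulting differential inequality remains of Gronwall type rather than producing a genuinely superlinear power of $\|\nabla G\|_{L^q}$. The approximation loop needed to meet the hypotheses of Lemma \ref{l1} is routine but has to be set up before any of these estimates can be made rigorous.
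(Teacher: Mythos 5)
Your argument follows the paper's own proof in all essentials: mollify $v$ and the initial data so that Lemma~\ref{l1} (with $A_j=v_jI$, $B=-\nabla v$, $f=\nabla v$) yields smooth approximate solutions, then derive uniform $L^q$ and $\nabla$-$L^q$ estimates via the same Gronwall-type differential inequalities — the new term $\nabla^2 v\,G$ being handled exactly as in \eqref{y3} by H\"older, the embedding $W^{1,q}(\R^3)\hookrightarrow L^\infty(\R^3)$ for $q>3$, and Young — and pass to the weak limit. The only additions are that you explicitly write out the Aubin--Lions compactness step and the uniqueness argument (the paper asserts uniqueness but does not prove it), both of which are correct and harmless supplements.
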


\begin{proof}
First, we assume that $v\in C^1(0,T; C_0^\infty(\R^3)), \; G_0\in C_0^\infty(\R^3)$.
Then, we can rewrite \eqref{e33} in the component form as
$$\partial_t G_k+v\cdot\nabla G_k=\nabla v \,G_k+\nabla v_k,\quad  1\le k\le 3.$$
Applying Lemma \ref{l1}  with $A_j(x,t)=v_j(x,t) I$ for   $1\le j\le 3$, $B(x,t)=\nabla v$,
and $f(x,t)=\nabla v_k$, we get a solution
$$G\in \bigcap_{l=1}^\infty\left\{C^1(0,T, H^{l-1}(\R^3))\cap(0,T; H^l(\R^3))\right\}, $$
which implies, by the Sobolev imbedding theorem,
\begin{equation}\label{GG}
G\in \bigcap_{k=1}^\infty C^1(0,T; C^k(\R^3))=C^1(0,T; C^\infty(\R^3)).
\end{equation}

Next, for $v\in \mathcal{W}(0,T)$,  there are  two sequences:
$v_n\in C^1(0,T; C_0^\infty(\R^3)), \; G^n_0\in C_0^\infty(\R^3),$
such that $ v_n\to   v  \text{ in } \mathcal{W}(0,T), \;  G^n_0\to   G_0 \text{ in }W^{1,q}(\R^3),$
thus $v_n\to   v$ in $C(B(0,a)\times(0,T))$ for all $a>0$ where $B(0,a)$ denotes the ball with radius $a$ and
centered at the origin. According to the previous result \eqref{GG}, there are a sequence of functions
$\{G_n\}_{n=1}^\infty\subset C^1(0,T; C^\infty(\R^3))$ satisfying
\begin{equation}\label{2}
\partial_t G_n+v_n\cdot\nabla G_n=\nabla v_n\,G_n+\nabla v_n,
\end{equation}
with $G_n(0)=G^n_0$.  Multiplying
\eqref{2} by $|G_n|^{q-2}G_n$, and integrating over $\R^3$, using
integration by parts and Young's inequality, we obtain,
\begin{equation*}
\begin{split}
&\f{1}{q}\f{d}{dt}\int_{\R^3}|G_n|^q dx\\
&=-\f{1}{q}\int_{\R^3} v_n\cdot\nabla |G_n|^q dx
+\int_{\R^3}\nabla v_n|G_n|^{q-2}G^2_n dx+\int_{\R^3}\nabla v_n |G_n|^{q-2} G_ndx\\
&\le \f{1+q}{q}\|G_n\|_{L^q}^q(\|\nabla v_n\|_{L^\infty}+\|\nabla v_n\|_{L^q})+\|\nabla v_n\|_{L^q}.
\end{split}
\end{equation*}
From  Gronwall's inequality, one obtains,
\begin{equation*}
\begin{split}
&\int_{\R^3}|G_n|^q dx\\
&\le\left\{\int_{\R^3}|G_n(0)|^q dx+q\int_0^t\|\nabla v_n\|_{L^q}
\textrm{exp}\left(-\int_0^t(q+1)(\|\nabla v_n\|_{L^\infty}+\|\nabla v_n\|_{L^q})d\tau\right)ds\right\}\\
&\qquad\times\textrm{exp}\left(\int_0^t(q+1)(\|\nabla v_n\|_{L^\infty}+\|\nabla v_n\|_{L^q})ds\right)\\
&\le\left(\int_{\R^3}|G_n(0)|^q dx+q\int_0^t\|\nabla v_n\|_{L^q}ds\right)\textrm{exp}
\left(\int_0^t(q+1)(\|\nabla v_n\|_{L^\infty}+\|\nabla v_n\|_{L^q})ds\right).
\end{split}
\end{equation*}
Thus,
\begin{equation}\label{4}
\|G_n\|_{L^\infty(0,T; L^q(\R^3))}\le C(T, \|v\|_{L^p(0,T; W^{2,q}(\R^3))})
\left(\|G(0)\|_{L^q(\R^3)} +1\right)<\infty.
\end{equation}
Hence, up to a subsequence, we can assume that the sequence $\{v_n\}$ was chosen
so that
$G_n\to   G\quad \textrm{weak-* in}\quad L^\infty(0,T; L^q(\R^3)).$

Taking the gradient in both sides of \eqref{2}, multiplying by $|\nabla G_n|^{q-2}\nabla G_n$ and
then integrating over $\R^3$, we get, with the help of H\"{o}lder's inequality and Young's inequality,
\begin{equation}\label{y3}
\begin{split}
&\f{1}{q}\f{d}{dt}\|\nabla G_n\|^q_{L^q(\R^3)}\\
&\le \int_{\R^3}|\nabla G_n|^q|\nabla v_n|dx
+\int_{\R^3} |G_n||\nabla G_n|^{q-1}|\nabla\nabla v_n|dx\\&\qquad+\int_{\R^3}|\nabla v_n||\nabla G_n|^qdx-\f{1}{q}
\int_{\R^3} v_n\nabla|\nabla G_n|^qdx +\int_{\R^3}|\nabla\nabla v_n||\nabla G_n|^{q-1}dx\\
&\le \int_{\R^3}|\nabla G_n|^q|\nabla v_n|dx+\int_{\R^3} |G_n||\nabla G_n|^{q-1}|\nabla\nabla v_n|dx\\
&\qquad+\int_{\R^3}|\nabla v_n||\nabla G_n|^qdx+\f{1}{q}
\int_{\R^3} |\nabla v_n||\nabla G_n|^qdx +\int_{\R^3}|\nabla\nabla v_n||\nabla G_n|^{q-1}dx\\
&\le C\|\nabla G_n\|^q_{L^q(\R^3)}\|v_n\|_{W^{2,q}(\R^3)}+(\|G_n\|^q_{L^\infty(\R^3)}+1)\|v_n\|_{W^{2,q}(\R^3)}\\
&\le C\|\nabla G_n\|^q_{L^q(\R^3)}\|v_n\|_{W^{2,q}(\R^3)}+(\|G_n\|^q_{L^q(\R^3)}+1)\|v_n\|_{W^{2,q}(\R^3)},
\end{split}
\end{equation}
since $q>3$. Using Gronwall's inequality and \eqref{4}, we
conclude from \eqref{y3} that
$$\|\nabla G_n\|_{L^\infty(0,T; L^q(\R^3))}\le C(T,\|v_n\|_{\mathcal{W}(0,T)})\left(\|\nabla G_n(0)\|_{L^q(\R^3)}+1\right),$$
and hence,
\begin{equation}\label{y31}
\begin{split}
\|\nabla G\|_{L^\infty(0,T; L^q(\R^3))}&\le\liminf_{n\to  \infty}\|\nabla G_n\|_{L^\infty(0,T; L^q(\R^3))}\\
&\le C(T,\|v\|_{\mathcal{W}(0,T)})\left(\|\nabla G(0)\|_{L^q(\R^3)}+1\right).
\end{split}
\end{equation}
Therefore,
$$\|G_n\|_{L^\infty(0,T; W^{1,q}(\R^3))}\le C(T, \|v\|_{L^p(0,T; W^{2,q}(\R^3))},\|G(0)\|_{W^{1,q}(\R^3)})<\infty.$$
Furthermore, since $q>3$, we deduce $G\in L^\infty(Q_T)$ and
\begin{equation*}
\|G\|_{L^\infty(Q_T)}\le C(T, \|v\|_{L^p(0,T; W^{2,q}(\R^3))}, \|G(0)\|_{W^{1,q}(\R^3)}
)<\infty.
\end{equation*}
Passing to the limit as $n\to  \infty$ in \eqref{2}, we show that \eqref{e33} holds at
least in the sense of distributions. Therefore,
$\partial_t G\in L^p(0,T; L^q(\R^3)),$
then $G\in W^{1,p}(0,T; L^q(\R^3))$, and hence $G\in C([0,T]; L^q(\R^3))$.

Finally, to show that the quantity $\|G\|_{W^{1,q}(\R^3)}(t)$ is
continuous in time, it suffices to show that $\|\nabla
G\|_{L^q(\R^3)}$ is \ continuous in time. Indeed, from \eqref{y3},
we know that $$\f{d}{dt}\|\nabla G\|^q_{L^q(\R^3)}(t)\in L^p(0,T),$$
which, with \eqref{y31}, implies that $\|\nabla G\|_{L^q(\R^3)}\in C([0,T]).$
The proof of Lemma \ref{g} is complete.
\end{proof}

\subsection{Local existence via the fixed-point theorem}

In order to solve locally system \eqref{e3}, we need to use the following fixed point theorem (cf. 1.4.11.6 in \cite{AI}):

\begin{Theorem}[Tikhonov Theorem]  \label{T2}
Let $M$ be a nonempty bounded closed convex subset of a separable reflexive Banach space $X$ and
let $F: M\mapsto M$ be a weakly continuous mapping (i.e. if $x_n\in M, x_n\to   x$ weakly in $X$,
then $F(x_n)\to   F(x)$ weakly in $X$ as well). Then $F$ has at least one fixed point in $M$.
\end{Theorem}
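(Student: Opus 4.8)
The plan is to transport the problem into the weak topology, where $M$ becomes a compact convex metrizable set, and then to invoke the Schauder--Tychonoff fixed point theorem.

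First I would record the topological input. Since $X$ is separable and reflexive, $X^{**}=X$ is separable, hence $X^{*}$ is separable (its dual $X^{**}=X$ is separable, and a Banach space with separable dual is itself separable), so the weak topology is metrizable on bounded sets; fix a metric $d$ inducing it on $M$. By Kakutani's characterization of reflexivity together with the Banach--Alaoglu theorem, the bounded, closed, convex set $M$ is weakly compact, and by the Eberlein--\v{S}mulian theorem it is weakly sequentially compact. Thus $(M,d)$ is a nonempty compact convex metric space, and the assumed implication ``$x_n\rightharpoonup x$ in $X$ $\Rightarrow$ $F(x_n)\rightharpoonup F(x)$ in $X$'' is precisely sequential continuity of $F:(M,d)\to(M,d)$, which on a metric space coincides with continuity.

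At this stage $M$ is a nonempty compact convex subset of the locally convex topological vector space $(X,w)$ and $F:M\to M$ is continuous, so the Schauder--Tychonoff theorem (Schauder's theorem in locally convex spaces) delivers a fixed point, which is all we need. If one wishes to avoid quoting Schauder--Tychonoff, the standard finite-dimensional reduction works: for $\varepsilon>0$ pick a finite weak $\varepsilon$-net $x_1,\dots,x_N\in M$, form the Schauder projection $P_\varepsilon(x)=\big(\sum_i\mu_i(x)\big)^{-1}\sum_i\mu_i(x)x_i$ with $\mu_i(x)=\max\{0,\varepsilon-d(x,x_i)\}$, which maps $M$ into the finite-dimensional simplex $M_\varepsilon=\mathrm{conv}\{x_1,\dots,x_N\}\subset M$ and moves points less than $\varepsilon$ in the finitely many weak seminorms that cut out the net; apply Brouwer's theorem to $P_\varepsilon\circ F:M_\varepsilon\to M_\varepsilon$ to get $x_\varepsilon=P_\varepsilon F(x_\varepsilon)$; then take $\varepsilon=1/n$, extract by weak compactness $x_{1/n}\rightharpoonup\bar x\in M$, and pass to the limit using that $P_{1/n}F(x_{1/n})-F(x_{1/n})$ tends to $0$ together with the weak continuity of $F$ to conclude $F(\bar x)=\bar x$.

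The one delicate point is the Schauder-projection estimate: the bound $d(P_\varepsilon x,x)<\varepsilon$ really wants a metric on $M$ compatible with the weak topology under which finite convex combinations are well controlled, so it is cleanest to carry out that step in terms of the finitely many functionals $f\in X^{*}$ defining the covering neighborhoods rather than in terms of $d$ directly, converting back to weak convergence only at the end. Everything else --- metrizability and weak sequential compactness of $M$, Brouwer's theorem, and the limit passage --- is routine, and of course the whole argument collapses to a one-line citation of Theorem 1.4.11.6 in \cite{AI}.
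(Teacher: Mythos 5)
Your proof is mathematically correct, and it fills in what the paper deliberately omits: the paper does not prove Theorem \ref{T2} at all, but merely cites it as Theorem 1.4.11.6 in Novotn\'y--Stra\v{s}kraba \cite{AI} and uses it as a black box in the fixed-point argument of Section 4. So there is no ``paper's own proof'' to compare against in the literal sense; what you supply is the standard proof, and it is essentially the one found in the cited reference.

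A few small observations on the argument itself. The reduction chain --- $X$ separable and reflexive $\Rightarrow$ $X^{**}$ separable $\Rightarrow$ $X^{*}$ separable $\Rightarrow$ weak topology metrizable on bounded sets --- is the right route and is correctly executed (the implication ``dual separable $\Rightarrow$ space separable'' is exactly what is needed to pass from $X^{**}$ to $X^{*}$). Once you have metrizability on $M$, the appeal to Eberlein--\v{S}mulian is redundant: weak compactness of $M$ (from reflexivity and Banach--Alaoglu) already gives sequential compactness because $M$ is a compact metric space. This is harmless, just an extra citation. The crucial conceptual step, which you identify correctly, is that the hypothesis of the theorem is only \emph{sequential} weak continuity, and it is the metrizability of the weak topology on $M$ that upgrades this to genuine topological continuity so that Schauder--Tychonoff applies; without the separability assumption on $X$, this upgrade would fail and the theorem as stated would need a different proof. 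Your alternative finite-dimensional reduction via Schauder projections and Brouwer is the usual way to prove Schauder--Tychonoff from scratch and is also fine; your own caveat about carrying out the projection estimates in terms of the defining functionals rather than the metric is the right way to make that step rigorous. In short: the proof is correct, and it does useful work the paper delegates to a reference.
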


Now, let us consider the following operator
$$L\omega:=\omega_t-\mu\D\omega-(\mu+\lambda)\nabla\Dv\omega,\quad \omega\in \mathcal{W}^{p,q}(0,T).$$
One has the following theorem by the maximal regularity of parabolic equations; see
Theorem 9.2 in \cite{AI}, or equivalently Theorem 4.10.7 and
Remark 4.10.9 in \cite{HA} (page 188).

\begin{Theorem}\label{T3}
Given $1<p<\infty$, $\omega_0\in V_0^{p,q}$ and $f\in L^p(0,T; L^q(\R^3)^3)$, the Cauchy problem
$$L\omega=f,\quad t\in(0,T); \quad \omega(0)=\omega_0,$$
has a unique solution $\omega:=L^{-1}(\omega_0, f)\in \mathcal{W}^{p,q}(0,T)$, and
$$\|\omega\|_{\mathcal{W}^{p,q}(0,T)}\le C\left(\|f\|_{L^p(0,T; L^q(\R^3))}+\|\omega_0\|_{V_0^{p,q}}\right),$$
where $C$ is independent of $\omega_0$, $f$ and $T$. Moreover, there exists a positive constant $c_0$ independent of $f$ and $T$ such that
$$\|\omega\|_{\mathcal{W}^{p,q}(0,T)}\ge c_0\sup_{t\in(0,T)}\|\omega(t)\|_{V^{p,q}_0}.$$
\end{Theorem}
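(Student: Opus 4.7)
The plan is to solve the linear Cauchy problem explicitly via the Fourier transform and then reduce it to the classical maximal $L^p$-$L^q$ regularity of the heat equation on $\R^3$. The Lam\'e operator $\mathcal{A}:=-\mu\Delta-(\mu+\lambda)\nabla\Dv$ acting on $L^q(\R^3)^3$ has Fourier symbol $M(\xi)=\mu|\xi|^2 I+(\mu+\lambda)\xi\xi^\top$, whose eigenvalues are $\mu|\xi|^2$ (of multiplicity $2$, on $\xi^\perp$) and $(2\mu+\lambda)|\xi|^2$ (simple, on $\R\xi$); both are strictly positive for $\xi\neq 0$ under the stated ellipticity hypotheses. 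I would therefore decompose $\omega=P\omega+Q\omega$ via the Leray projector $P$ onto divergence-free fields, whose boundedness on $L^q(\R^3)^3$ for $1<q<\infty$ follows from the $L^q$-boundedness of the Riesz transforms. Since $P$ and $Q=I-P$ commute with $\mathcal{A}$, the Cauchy problem $L\omega=f$ decouples into the two scalar heat equations
\[
(P\omega)_t-\mu\Delta(P\omega)=Pf,\qquad (Q\omega)_t-(2\mu+\lambda)\Delta(Q\omega)=Qf,
\]
with initial data $P\omega_0$ and $Q\omega_0$ respectively.

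On each of these scalar problems I would then invoke the classical maximal $L^p$-regularity of the heat equation on $\R^3$. The key ingredients are: (i) $L^q(\R^3)$ is a UMD Banach space for $1<q<\infty$; (ii) the heat semigroup $\{e^{t\Delta}\}_{t\ge 0}$ is an $R$-bounded analytic semigroup on $L^q(\R^3)$, so Weis's operator-valued Mikhlin multiplier theorem yields the bound $\|\omega_t\|_{L^p(L^q)}+\|\nabla^2\omega\|_{L^p(L^q)}\le C\bigl(\|f\|_{L^p(L^q)}+\|\omega_0\|_{B^{2(1-1/p)}_{qp}}\bigr)$; (iii) the trace space of $W^{1,p}(0,T;L^q)\cap L^p(0,T;W^{2,q})$ at $t=0$ is precisely the real interpolation space $(L^q,W^{2,q})_{1-1/p,p}=B^{2(1-1/p)}_{qp}$, with an accompanying continuous embedding $\mathcal{W}^{p,q}(0,T)\hookrightarrow C([0,T];B^{2(1-1/p)}_{qp})$ whose norm is controlled independently of $T$. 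Reassembling via the boundedness of $P$ and $Q$ on $L^q(\R^3)^3$ then gives both the existence-uniqueness statement and the main maximal regularity estimate, while the embedding in (iii) yields the trace inequality $\|\omega\|_{\mathcal{W}^{p,q}(0,T)}\ge c_0\sup_{t\in(0,T)}\|\omega(t)\|_{V_0^{p,q}}$.

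The delicate point I expect to require the most care is the $T$-independence of the constant $C$. Because $-\Delta$ on $\R^3$ has no spectral gap at zero, small-resolvent arguments are not available; instead I would extend $f$ by zero outside $(0,T)$ and represent $\omega$ as convolution with the heat kernel on $\R\times\R^3$, so that $\partial_t\omega$ and $\nabla^2\omega$ appear as parabolic Calder\'on--Zygmund operators whose $L^p(\R;L^q(\R^3))$ bounds depend only on $p,q,\mu,\lambda$. The contribution from the initial data is then controlled purely by the trace embedding, whose constant is also independent of $T$. Either this direct route, or the abstract form invoked from Amann's book, delivers the two-sided inequalities uniformly in $T$ and completes the proof.
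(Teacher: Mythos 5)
Your argument is essentially correct, but it is a genuinely different route from the paper's: the paper does not prove Theorem \ref{T3} at all, it simply invokes the abstract maximal $L^p$-regularity results for analytic semigroups (Theorem 9.2 in the Novotn\'y--Stra\v{s}kraba book, or Theorem 4.10.7 and Remark 4.10.9 in Amann), which apply directly to the strongly elliptic Lam\'e operator $-\mu\Delta-(\mu+\lambda)\nabla\Dv$ without any decomposition. You instead diagonalize the operator by the Leray projector --- correctly, since $P$ and $Q=I-P$ are Fourier multipliers commuting with the symbol $\mu|\xi|^2I+(\mu+\lambda)\xi\xi^\top$, and on the range of $Q$ one has $\nabla\Dv Q\omega=\Delta Q\omega$, so the system splits into heat equations with diffusivities $\mu$ and $2\mu+\lambda>0$ (the latter positive because $\mu>0$, $2\mu+3\lambda>0$) --- and then import classical heat-equation maximal regularity plus the real-interpolation trace identification $(L^q,W^{2,q})_{1-1/p,p}=B^{2(1-1/p)}_{qp}$. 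This buys a self-contained, explicit proof at the price of checking the boundedness of $P,Q$ on $L^q$ and on the Besov data spaces, whereas the citation route is shorter and needs no structure of the operator beyond analyticity of the generated semigroup. Two small points you should still record: first, the lower bound in the theorem involves the full $V_0^{p,q}$ norm, i.e.\ also the $B^{1-1/p}_{qp}$ component, which follows from the same trace argument applied to the couple $(L^q,W^{1,q})$ since $\mathcal{W}^{p,q}(0,T)\subset W^{1,p}(0,T;L^q)\cap L^p(0,T;W^{1,q})$; second, your zero-extension/Calder\'on--Zygmund argument gives $T$-uniform control only of the top-order quantities $\partial_t\omega$ and $\nabla^2\omega$, while the lower-order pieces of the $\mathcal{W}^{p,q}$ norm (e.g.\ $\|\omega\|_{L^p(0,T;L^q)}$, bounded by $T^{1/p}\|\omega_0\|_{L^q}+T\|\partial_t\omega\|_{L^p(0,T;L^q)}$) are uniform only for $T$ in a bounded range; this is all that is needed for the local existence argument, but it is worth stating, since the literally $T$-independent two-sided estimate cannot hold for the lower-order terms as $T\to0$ (test with a time-independent solution).
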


Notice that Theorem \ref{T3} implies that the operator $L$ is
invertible. Thus we define the operator ${\mathcal H}(v):
\mathcal{W}^{p,q}(0,T)\mapsto\mathcal{W}^{p,q}(0,T)$ by
\begin{multline}
{\mathcal H}(v):=L^{-1}\Big(v_0, \; \partial_t((1-{\mathcal S}(v))v)-\Dv({\mathcal S}(v)v\otimes
v)+\nu^{-2}\nabla(P(1)-P({\mathcal S}(v)))\\
+ \nu^{-2}\Dv({\mathcal S}(v)(I+{\mathcal T}(v))(I+{\mathcal T}(v))^\top)\Big).
\end{multline}
Then, solving  system \eqref{e3} is equivalent to solving
\begin{equation}\label{5}
v={\mathcal H}(v).
\end{equation}

To solve \eqref{5}, we define
$$B_R(0):=\{v\in \mathcal{W}^{p,q}(0,T):\, \|v\|_{\mathcal{W}^{p,q}(0,T)}\le R\}.$$
Then, we prove first the following claim:

\begin{Lemma}\label{H1}
There are $\nu, T>0$, and $0<R<1$ such that $${\mathcal H}(B_R(0))\subset B_R(0).$$
\end{Lemma}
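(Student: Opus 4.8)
The plan is to estimate $\|{\mathcal H}(v)\|_{\mathcal{W}^{p,q}(0,T)}$ for $v\in B_R(0)$ using the maximal-regularity bound from Theorem \ref{T3}, namely
$$\|{\mathcal H}(v)\|_{\mathcal{W}^{p,q}(0,T)}\le C\Big(\|v_0\|_{V_0^{p,q}}+\|g(v)\|_{L^p(0,T;L^q(\R^3))}\Big),$$
where $g(v)$ denotes the full right-hand side appearing inside $L^{-1}$. Since $\|v_0\|_{V_0^{p,q}}\le\dl_0$ by the hypothesis \eqref{xx1} and $\dl_0<1$, the first term is controlled once we take $R$ not too small relative to $\dl_0$; the real work is to show that the nonlinear term $\|g(v)\|_{L^p(0,T;L^q(\R^3))}$ can be made $\le R/(2C)$ by choosing $\nu$ large and $T$ small. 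I would treat the four groups of terms in $g(v)$ separately.

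First, for the transport/convection pieces $\partial_t((1-{\mathcal S}(v))v)$ and $\Dv({\mathcal S}(v)v\otimes v)$: write ${\mathcal S}(v)=1+({\mathcal S}(v)-1)$ and expand. The key inputs are Lemma \ref{r} (which gives $\|{\mathcal S}(v)-1\|_{L^\infty(0,T;L^q)}$, $\|\nabla{\mathcal S}(v)\|_{L^\infty(0,T;L^q)}$, $\|\partial_t{\mathcal S}(v)\|_{L^p(0,T;L^q)}$ bounded in terms of $R$ via the continuity equation $\partial_t r=-\Dv(rv)$), the embedding $W^{2,q}\hookrightarrow W^{1,\infty}$ for $q>3$, and the fact that $\|v\|_{L^\infty(0,T;B^{2(1-1/p)}_{qp})}$ is controlled by $R$ (Theorem \ref{T3}). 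Products like $v\otimes v$ and $v\,\nabla v$ are handled by Hölder with one factor in $L^\infty$; the time integration then produces a positive power of $T$ (e.g. $\|v\|_{L^p(0,T;\cdot)}\le T^{1/p}\|v\|_{L^\infty(0,T;\cdot)}$, or one uses that $\|v\|_{L^\infty(0,T;\cdot)}\lesssim\|v_0\|+T^{1/p'}R$-type bounds), so these contributions are $\le C(R)T^{\eta}$ for some $\eta>0$.

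Second, the pressure term $\nu^{-2}\nabla(P(1)-P({\mathcal S}(v)))$: since $P$ is smooth near $\r=1$ and ${\mathcal S}(v)$ stays in a fixed neighborhood of $1$ (by the Gronwall bounds in Lemma \ref{r}, for $R,T$ small), $P(1)-P({\mathcal S}(v))=O({\mathcal S}(v)-1)$ with $\nabla(\cdot)$ bounded by $\|\nabla{\mathcal S}(v)\|_{L^\infty(0,T;L^q)}\le C(\|\nabla r_0\|_{L^q}+1)$ uniformly; crucially this is multiplied by $\nu^{-2}$, so choosing $\nu$ large kills it. Third, the elastic term $\nu^{-2}\Dv({\mathcal S}(v)(I+{\mathcal T}(v))(I+{\mathcal T}(v))^\top)$: expand the product; the leading constant part $\Dv({\mathcal S}(v)I)=\nabla{\mathcal S}(v)$ is again controlled (times $\nu^{-2}$), and the $\nabla{\mathcal T}(v)$, $\nabla{\mathcal S}(v)$, ${\mathcal T}(v)\nabla{\mathcal T}(v)$ pieces are bounded via Lemma \ref{g} (which gives $\|\nabla{\mathcal T}(v)\|_{L^\infty(0,T;L^q)}$, $\|{\mathcal T}(v)\|_{L^\infty(Q_T)}$ finite in terms of $R$) together with the $\nu^{-2}$ prefactor. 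So all of the pressure and elastic contributions are $\le C(R)\nu^{-2}$.

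\textbf{Conclusion and main obstacle.} Putting these together, $\|{\mathcal H}(v)\|_{\mathcal{W}^{p,q}(0,T)}\le C\dl_0+C(R)\big(T^{\eta}+\nu^{-2}\big)$. The delicate point — and the main obstacle — is the order of quantifiers: the constants $C(R)$ coming from Lemmas \ref{r} and \ref{g} are of the form $C(T,\|v\|_{\mathcal{W}(0,T)})=C(T,R)$ and a priori could blow up as $\nu\to\infty$ or as $T$ is adjusted, so one must check that they are bounded uniformly for $T\le T_0$ and $R\le 1$ (the Gronwall exponentials are $\exp(C(R)T)$, hence bounded on $[0,T_0]$), and that the scaling by $\nu$ does not reintroduce a large factor elsewhere — note the convective and time-derivative terms are \emph{not} multiplied by $\nu^{-2}$, which is exactly why one needs the extra smallness from $T^\eta$ there and cannot rely on $\nu$ alone. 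The recipe is therefore: fix $R$ with $2C\dl_0\le R<1$ (possible since $\dl_0<1$ can be taken small), then choose $\nu$ large enough that $C(R)\nu^{-2}\le R/4$, then choose $T\le T_0$ small enough that $C(R)T^{\eta}\le R/4$; this yields ${\mathcal H}(B_R(0))\subset B_R(0)$.
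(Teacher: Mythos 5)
Your overall shape is right: apply the maximal‐regularity estimate of Theorem \ref{T3} to ${\mathcal H}$, treat the terms on the right‐hand side group by group, and observe that the pressure and elastic stress contributions carry the prefactor $\nu^{-2}$ and are therefore killed by taking $\nu$ large. You also correctly flag that the transport/convection pieces have no $\nu^{-2}$ in front and must be handled separately.

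There is, however, a genuine gap in how you propose to handle those transport/convection pieces. You claim they are $\le C(R)T^{\eta}$ for some $\eta>0$, via bounds like $\|v\|_{L^p(0,T;\cdot)}\le T^{1/p}\|v\|_{L^\infty(0,T;\cdot)}$. But to extract a power of $T$ from, say, $v\cdot\nabla v$ this way you would need to place \emph{both} factors in $L^\infty$ in time, which requires the trace space $B^{2(1-1/p)}_{qp}$ to embed into $W^{1,q}$ (or $W^{1,\infty}$). That embedding needs $2(1-1/p)>1$, which fails precisely at $p=2$ — a value the theorem explicitly allows (and part (II) of Theorem \ref{T1} requires). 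Likewise $(1-{\mathcal S}(v))\partial_t v$ has $\partial_t v$ only in $L^p_tL^q_x$, so no $T^{1/p}$ can be split off there either without assuming $\partial_t v\in L^\infty_t$. The paper's mechanism is different and simpler: these terms are \emph{quadratic} in $R$, with no power of $T$ needed. Concretely, $\|(1-{\mathcal S}(v))\partial_t v\|_{L^p_tL^q}\le\|1-{\mathcal S}(v)\|_{L^\infty(Q_T)}\|\partial_t v\|_{L^p_tL^q}\le C(T)R\cdot R$, where the first factor is $O(R)$ because the \emph{scaled} initial density satisfies $\|r_0-1\|_{W^{1,q}}\le R$ (tracking how $\|\r_0-1\|_{L^q}$ and $\|\nabla\r_0\|_{L^q}$ transform under $y=\nu x$), and Gronwall propagates that smallness; and $\|{\mathcal S}(v)v\cdot\nabla v\|_{L^p_tL^q}\le C\|v\|_{L^\infty_tL^q}\|v\|_{L^p_tW^{2,q}}\le CR^2$ by Hölder with the $L^\infty$-in-time factor placed on the low-derivative term. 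This yields $\|{\mathcal H}(v)\|\le C(T)(R^2+\nu^{-2})\le R$, holding for all $T\le T_0$, so there is no need to shrink $T$.

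You also miss a structural simplification the paper uses before estimating: since ${\mathcal S}(v)$ solves the continuity equation, one has
\begin{equation*}
\partial_t\bigl((1-{\mathcal S}(v))v\bigr)-\Dv\bigl({\mathcal S}(v)\,v\otimes v\bigr)=(1-{\mathcal S}(v))\partial_t v-{\mathcal S}(v)(v\cdot\nabla)v,
\end{equation*}
so the $\partial_t{\mathcal S}(v)$ term cancels and the estimate of $\|\partial_t{\mathcal S}(v)\|_{L^p_tL^q}$ you invoke is not actually needed. Your expansion route could be pushed through, but at the cost of extra terms and more delicate bookkeeping. Finally, your quantifier order ("fix $R$ with $2C\dl_0\le R<1$, then $\nu$ large, then $T$ small") is in tension with the $R^2$-mechanism, which requires $R\le 1/(2C(T))$, i.e., $R$ small; the paper's order is: $R$ small so $C(T_0)R^2\le R/2$, then $\nu$ large so $C(T_0)\nu^{-2}\le R/2$, with $T=T_0$ fixed.
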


\begin{proof}
Let $T>0$, $0<R<1$ and $v\in B_R(0)$. Since ${\mathcal S}(v)$ solves \eqref{e31}, we can rewrite  operator ${\mathcal H}$ as
\begin{multline}
{\mathcal H}(v)=L^{-1}\Big(v_0, \; (1-{\mathcal S}(v))\partial_t v-{\mathcal S}(v)v\cdot\nabla
v+\nu^{-2}\nabla(P(1)-P({\mathcal S}(v)))\\+
\nu^{-2}\Dv({\mathcal S}(v)(I+{\mathcal T}(v))(I+{\mathcal T}(v))^\top)\Big).
\end{multline}
Thus, it suffices to prove that the terms in the above expression are small in the norm of $L^p(0,T; (L^q(\R^3))^3)$.

First of all, we begin to deal with the first term by letting $\ov{r}:={\mathcal S}(v)-1$. Thus, $\ov{r}$ satisfies the equations
\begin{equation*}
\partial_t\ov{r}+\Dv(\ov{r}v)=0,\quad
\ov{r}(x,0)=r_0-1.
\end{equation*}
Repeating the argument in Section \ref{gradient} again, we obtain
\begin{equation*}
\begin{split}
\|\ov{r}\|_{L^\infty(Q_T)}&\le C\|\ov{r}\|_{L^\infty(0,T;W^{1,q}(\R^3))}
\le C\|r_0-1\|_{W^{1,q}(\R^3)}C(T, \|v\|_{\mathcal{W}(0,T)})\\
&\le \|r_0-1\|_{W^{1,q}(\R^3)} C(T,R)\le C(T)R,
\end{split}
\end{equation*}
where, by the formula of change of variables, we deduce that
$$\|r_0-1\|_{L^q(\R^3)}\le \nu^{\f{3}{q}}\|\r_0-1\|_{L^q(\R^3)}\le R,\quad
\|\nabla r_0\|_{L^q(\R^3)}\le \nu^{\f{3-q}{q}}\|\nabla \r_0\|_{L^q(\R^3)}\le R,$$
if $\|\r_0-1\|_{L^q(\R^3)}$ is small
enough and $\nu>1$ is large enough. Hence, due to the assumption
$v\in B_R(0)$, we obtain
\begin{equation}\label{7}
\|(1-{\mathcal S}(v))\partial_t v\|_{L^p(0,T; L^q(\R^3))}\le C(T)R^2.
\end{equation}

Secondly, by the Sobolev imbedding,
$$\int_{\R^3}|v\nabla v|^qdx\le \|v\|^q_{L^q(\R^3)}\|\nabla v\|^q_{L^\infty(\R^3)}\le C\|v\|_{L^q(\R^3)}^q\|v\|_{W^{2,q}(\R^3)}^q,$$
and thus, since $W^{1,p}(0,T;L^q(\R^3))\hookrightarrow C([0,T]; L^q(\R^3))$, we deduce
\begin{equation*}
\begin{split}
\int_0^T\left(\int_{\R^3}|v\nabla v|^qdx\right)^{\f{p}{q}}ds&\le C\int_0^T\|v\|_{L^q(\R^3)}^p\|v\|_{W^{2,q}(\R^3)}^pds\\
&\le C\|v\|_{L^\infty(0,T;L^q(\R^3))}^p\|v\|_{L^p(0,T; W^{2,q}(\R^3))}^p \le \|v\|^{2p}_{\mathcal{W}(0,T)}\le CR^{2p}.
\end{split}
\end{equation*}
Therefore, we get
\begin{equation}\label{8}
\|{\mathcal S}(v)(v\cdot\nabla)v\|_{L^p(0,T; L^q(\R^3))}\le CR^2.
\end{equation}

Thirdly, for the term $\nabla P({\mathcal S}(v))$, we can estimate it as follows
\begin{equation}\label{9}
\begin{split}
&\|\nabla P({\mathcal S}(v))\|_{L^p(0,T; L^q(\R^3))} \\
&\le C(T)\sup\left\{P'(\eta): C(T)^{-1}\le\eta\le C(T)\right\}(\|\nabla r_0\|_{L^q(\R^3)}+1).
\end{split}\end{equation}

Fourthly, for the term $\Dv ({\mathcal S}(v)(I+{\mathcal T}(v))(I+{\mathcal T}(v))^\top$, we have
$$|\Dv ({\mathcal S}(v)(I+{\mathcal T}(v))(I+{\mathcal T}(v))^\top)|\le |\nabla {\mathcal S}(v)||I+{\mathcal T}(v)|^2+2{\mathcal S}(v)|\nabla {\mathcal T}(v)||I+{\mathcal T}(v)|,$$
and hence,
\begin{equation}\label{10}
\begin{split}
&\|\Dv ({\mathcal S}(v)(I+{\mathcal T}(v))(I+{\mathcal T}(v))^\top)\|_{L^p(0,T; L^q)}\\
&\le \||\nabla {\mathcal S}(v)||I+{\mathcal T}(v)|^2\|_{L^p(0,T; L^q)}+2\|{\mathcal S}(v)|\nabla {\mathcal T}(v)||I+{\mathcal T}(v)|\|_{L^p(0,T; L^q)}\\
&\le C(T)M,
\end{split}
\end{equation}
with
$$M=\max\left\{\|G_0\|_{W^{1,q}}+1, \|G_0\|_{L^\infty(\R^3)}+1, \|r_0\|_{W^{1,q}}+1, \|r_0\|_{L^\infty(\R^3)}+1\right\}^3<\infty.$$

Combining together \eqref{7}-\eqref{10}, using the Theorem \ref{T3},
and assuming  parameter $\nu$ sufficiently large and $R<1$ sufficiently small, we get
$$\|{\mathcal H}(v)\|_{\mathcal{W}(0,T)}\le C(T)(R^2+\nu^{-2})\le R.$$
The proof of Lemma \ref{H1} is complete.
\end{proof}

Thus, it is only left to show the following:

\begin{Lemma}\label{H2}
The operator ${\mathcal H}$ is weakly continuous from
$\mathcal{W}^{p,q}(0,T)$ into itself.
\end{Lemma}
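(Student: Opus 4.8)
The plan is to show weak continuity of $\mathcal{H}$ by taking a sequence $v_n \rightharpoonup v$ weakly in $\mathcal{W}^{p,q}(0,T)$ (all sitting inside $B_R(0)$, which is closed, bounded and convex, hence weakly closed, so $v \in B_R(0)$ too) and proving $\mathcal{H}(v_n) \rightharpoonup \mathcal{H}(v)$. Since $\mathcal{H} = L^{-1}(v_0, \mathcal{F}(v))$ where $\mathcal{F}(v)$ denotes the big forcing term, and $L^{-1}$ is a bounded linear operator by Theorem \ref{T3}, it is weakly continuous; so it suffices to show $\mathcal{F}(v_n) \rightharpoonup \mathcal{F}(v)$ in $L^p(0,T;L^q(\R^3))$. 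Because the sequence $\{\mathcal{F}(v_n)\}$ is bounded in that reflexive space (by the estimates \eqref{7}--\eqref{10}), it is enough to identify the weak limit; and for that it suffices to prove strong convergence of $\mathcal{F}(v_n)$ in a weaker topology, e.g. in $L^p(0,T; L^q_{loc})$ or in the sense of distributions on $Q_T$, which then pins down the weak-$L^p$ limit as $\mathcal{F}(v)$.

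The first step is to establish compactness for the auxiliary maps $\mathcal{S}$ and $\mathcal{T}$. From $v_n \rightharpoonup v$ in $\mathcal{W}^{p,q}(0,T)$ and the compact embedding $\mathcal{W}^{p,q}(0,T) \hookrightarrow\hookrightarrow C([0,T]; L^q(B(0,a)))$ (Aubin--Lions on bounded balls, using $W^{1,p}(0,T;L^q)\cap L^p(0,T;W^{2,q}) \hookrightarrow\hookrightarrow L^p(0,T; W^{1,q}_{loc})$ and continuity in time), we get $v_n \to v$ strongly in $C(B(0,a)\times[0,T])$ and $\nabla v_n \to \nabla v$ in $L^p(0,T; L^q(B(0,a)))$ for every $a>0$. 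Then I would pass to the limit in the transport equations: $\bar r_n := \mathcal{S}(v_n)-1$ solves $\partial_t \bar r_n + \Dv(\bar r_n v_n) = -\Dv v_n$ with fixed initial data, and the uniform bounds of Lemma \ref{r} ($\bar r_n$ bounded in $L^\infty(0,T;W^{1,q})$, $\partial_t \bar r_n$ bounded in $L^p(0,T;L^q)$) give, again by Aubin--Lions, $\mathcal{S}(v_n) \to \mathcal{S}(v)$ strongly in $C([0,T];L^q(B(0,a)))$ and a.e.; uniqueness in Lemma \ref{r} identifies the limit. The same argument applied to \eqref{e33}, using Lemma \ref{g}, gives $\mathcal{T}(v_n) \to \mathcal{T}(v)$ strongly in $C([0,T];L^q(B(0,a)))$ and a.e., with $\nabla \mathcal{T}(v_n)$ bounded in $L^\infty(0,T;L^q)$ so that (up to subsequence) $\nabla \mathcal{T}(v_n) \rightharpoonup \nabla \mathcal{T}(v)$.

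With these convergences in hand, the third step is term-by-term passage to the limit in $\mathcal{F}(v_n)$, testing against a fixed $\phi \in C_0^\infty(Q_T)$: in $(1-\mathcal{S}(v_n))\partial_t v_n$, the factor $1-\mathcal{S}(v_n) \to 1-\mathcal{S}(v)$ strongly in $C([0,T];L^q(\supp\phi))$ (hence in $L^\infty$ on the compact support, by $q>3$) while $\partial_t v_n \rightharpoonup \partial_t v$ weakly in $L^p(0,T;L^q)$, so the product converges; in $\mathcal{S}(v_n) v_n\cdot\nabla v_n$, the first two factors converge strongly in $C(\supp\phi)$ and $\nabla v_n \rightharpoonup \nabla v$ (indeed $\to$ strongly on compacts), so again the product passes; $\nabla(P(1)-P(\mathcal{S}(v_n)))$ passes because $P(\mathcal{S}(v_n)) \to P(\mathcal{S}(v))$ a.e. with $\nabla P(\mathcal{S}(v_n))$ bounded in $L^p(0,T;L^q)$; and $\Dv(\mathcal{S}(v_n)(I+\mathcal{T}(v_n))(I+\mathcal{T}(v_n))^\top)$, expanded via the product rule as in \eqref{10}, passes because $\nabla\mathcal{S}(v_n)$ and $\nabla\mathcal{T}(v_n)$ converge weakly while the zeroth-order factors $\mathcal{S}(v_n), I+\mathcal{T}(v_n)$ converge strongly on compacts and in $L^\infty$. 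Thus $\mathcal{F}(v_n) \to \mathcal{F}(v)$ in $\mathcal{D}'(Q_T)$, which together with boundedness in $L^p(0,T;L^q)$ forces $\mathcal{F}(v_n) \rightharpoonup \mathcal{F}(v)$ weakly there, and applying the bounded operator $L^{-1}$ yields $\mathcal{H}(v_n) \rightharpoonup \mathcal{H}(v)$ in $\mathcal{W}^{p,q}(0,T)$. Combined with Lemma \ref{H1} and Theorem \ref{T2}, this produces a fixed point and completes the local existence; undoing the scaling $s=\nu^2 t,\ y=\nu x$ recovers a solution of \eqref{e1} on $(0,T_0)$ by choosing $T = \nu^2 T_0$. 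The main obstacle is the quadratic/product nonlinearities: one cannot pass to the limit in a product of two merely weakly convergent factors, so the whole argument hinges on extracting \emph{strong} (local-in-space) convergence for $v_n$, $\mathcal{S}(v_n)$ and $\mathcal{T}(v_n)$ via Aubin--Lions and on isolating, in each nonlinear term, at most one genuinely weakly-convergent factor (always a gradient) multiplied by strongly-convergent zeroth-order factors.
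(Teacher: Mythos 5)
Your proposal follows essentially the same route as the paper: reduce to weak convergence of the forcing term via the linearity and boundedness of $L^{-1}$, use the uniform bounds from Lemmas \ref{r} and \ref{g} together with Aubin--Lions compactness to upgrade weak convergence of $v_n$, ${\mathcal S}(v_n)$, ${\mathcal T}(v_n)$ to strong local (in space) convergence, and then pass to the limit in each nonlinear term as a product of one weakly-convergent gradient factor with strongly-convergent zeroth-order factors. The only cosmetic difference is that you identify the weak $L^p(0,T;L^q)$ limit through distributional convergence plus boundedness, whereas the paper passes directly to the weak limit term by term; these are logically equivalent.
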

\begin{proof}
Assume that $v_n\to   v$ weakly in $\mathcal{W}^{p,q}(0,T)$, and set
$r_n:={\mathcal S}(v_n),\, G_n:={\mathcal T}(v_n),$
 then $\{r_n\}_{n=1}^\infty$ and $\{G_n\}_{n=1}^\infty$ are uniformly bounded in
 $L^\infty(0,T; W^{1,q}(\R^3))\cap W^{1,p}(0, T; L^q(\R^3))$ by Lemmas \ref{l1} and \ref{g}.
Hence, up to a subsequence, we can assume that
$r_n\to   r$ and $G_n\to   G$ $\textrm{weakly}^*$ in $L^\infty(0,T; W^{1,q}(\R^3))\cap W^{1,p}(0, T; L^q(\R^3))$
and then strongly in $C((0,T)\times B(0,a))$ for all $a>0$.
And at least the same convergence holds for $v_n$. Thus, \eqref{e31} and \eqref{e33}
follow easily  from above convergence.

Since $r_n\to   r$ weakly* in $L^\infty(0,T; W^{1,q}(\R^3))\cap W^{1,p}(0, T; L^q(\R^3))$,
we can assume that $P'({\mathcal S}(v_n))\nabla {\mathcal S}(v_n)\to   P'({\mathcal S}(v))\nabla {\mathcal S}(v)$ weakly in $L^p(0,T; L^q(\R^3))$ and hence,
$$L^{-1}\left(0, \nabla P({\mathcal S}(v_n))\right)\to   L^{-1}
\left(0, \nabla P({\mathcal S}(v))\right)\quad \textrm{weakly in }\quad\mathcal{W}(0,T),$$
since the strong continuity of $L^{-1}$ from $L^p(0,T; L^q(\R^3))$ into $\mathcal{W}(0,T)$
and the linearity of the operator $L$ imply also the weak continuity in these spaces.

Similarly, since $\partial_t v_n\to   \partial_t v$ weakly in $L^p(0,T; L^q(\R^3))$
and $r_n\to   r$ in $C((0,T)\times B(0,a))$ for all $a>0$, we have $(r_e-r_n)\partial_t v_n\to   (r_e-r)\partial_t v$
weakly in $L^p(0,T; L^q(\R^3))$ and consequently
$$L^{-1}\left(0, (r_e-r_n)\partial_t v_n\right)\to   L^{-1}\left(0, (r_e-r)
\partial_t v\right)\quad \textrm{weakly in }\quad \mathcal{W}(0,T).$$

Since $\nabla v_n\to   \nabla v$ weakly in $W^{1,p}(0,T; W^{-1,q}(\R^3))\cap L^p(0,T; W^{1,q}(\R^3))$
which is compactly imbedded in to $C([0,T]; L^q(B(0,a)))$ for all $a>0$, we can assume that $v_n\to   v$
strongly in $L^\infty(0,T; L^q(B(0,a)))$ for all $a>0$, and then
$${\mathcal S}(v_n)(v_n\cdot\nabla) v_n\to   {\mathcal S}(v)(v\cdot\nabla)v$$weakly in $L^p(0,T; L^q(\R^3)).$
Hence
$$L^{-1}\left(0, {\mathcal S}(v_n)(v_n\cdot\nabla)v_n\right)\to
L^{-1}\left(0, {\mathcal S}(v)(v\cdot\nabla) v\right)\quad \textrm{weakly in }\quad \mathcal{W}(0,T).$$

Finally, due to the facts that $r_n\to   r$ and $G_n\to   G$ $\textrm{weakly}^*$
in $L^\infty(0,T; W^{1,q}(\R^3))\cap W^{1,p}(0, T; L^q(\R^3))$ and strongly in $C((0,T)\times B(0,a))$ for all $a>0$, we deduce that
$$\Dv({\mathcal S}(v_n)(I+{\mathcal T}(v_n))(I+{\mathcal T}(v_n))^\top)\to  \Dv({\mathcal S}(v)(I+{\mathcal T}(v))
(I+{\mathcal T}(v))^\top)$$ weakly in $L^p(0,T; L^q(\R^3)).$
Therefore,
$$L^{-1}\left(0, \Dv({\mathcal S}(v_n)(I+{\mathcal T}(v_n))(I+{\mathcal T}(v_n))^\top)\right)\to   L^{-1}
\left(0, \Dv({\mathcal S}(v)(I+{\mathcal T}(v))(I+{\mathcal T}(v))^\top)\right)$$ weakly in  $\mathcal{W}(0,T)$.

Thus, we can conclude that
$${\mathcal H}(v_n)\to   {\mathcal H}(v)\quad \textrm{weakly in}\quad\mathcal{W}(0,T),$$
due to  the weak continuity of  map $L^{-1}(v, 0)$. The proof of Lemma \ref{H2} is complete.
\end{proof}

Therefore, by Theorem \ref{T2}, there exists at least a fixed point
\begin{equation}\label{11}
v={\mathcal H}(v)\in B_R(0)\subset\mathcal{W}(0,T),
\end{equation}
of  mapping ${\mathcal H}$. The fixed point $v$ provides a local in time solution $(\r, \u, E)$ of
system \eqref{e1} near its equilibrium through the scaling with $\nu$ sufficiently large.

The proof of the local existence in Theorem \ref{T20} is complete. The uniqueness will be proved in the next section.

\section{Uniqueness}

In this section, we  prove the uniqueness of the local solution found in the previous section.
Notice that, the argument in Section \ref{local}  yields  that
$$\partial_t v\in L^2(0,T; L^2(\R^3),\quad \nabla r\in L^2(0,T; L^2(\R^3)),\quad \nabla G\in L^2(0,T; L^2(\R^3)).$$
Hence, using the interpolation, we deduce that
$$\partial_t v\in L^{p_0}(0,T; L^3(\R^3),\quad \nabla r\in L^{p_0}(0,T; L^3(\R^3)),\quad
\nabla G\in L^{p_0}(0,T; L^3(\R^3)),$$ where
$$\f{1}{p_0}=\f{\theta}{2}+\f{1-\theta}{p},\quad
\f{1}{3}=\f{\theta}{2}+\f{1-\theta}{q},$$ for some $\theta\in
[0,1]$. Now, assume that $v_1$, $v_2$ satisfy \eqref{11} for some
$T>0$. Let
$$r:={\mathcal S}(v_1)-{\mathcal S}(v_2),\quad v:=v_1-v_2,\quad G:={\mathcal T}(v_1)-{\mathcal T}(v_2),$$
 with a little abuse of notations (however, there should be no confusion in the rest of this section). Then, we have
\begin{equation}\label{12}
\partial_t r+v_1\cdot\nabla r+v\cdot\nabla {\mathcal S}(v_2)+r\Dv v_1+{\mathcal S}(v_2)\Dv v=0,\quad
r(0)=0.
\end{equation}
Multiplying \eqref{12} by $r$, and integrating over $\R^3$, we get
\begin{equation*}
\begin{split}
\f{1}{2}\f{d}{dt}\|r\|_{L^2}^2-\f{1}{2}\int_{\R^3}|r|^2\Dv v_1dx
+\int_{\R^3} \left(v\nabla {\mathcal S}(v_2)r+|r|^2\Dv v_1+ r{\mathcal S}(v_2)\Dv v\right)dx=0,
\end{split}
\end{equation*}
which yields
\begin{equation}\label{1213}
\begin{split}
\f{d}{dt}\|r\|^2_{L^2(\R^3)}&\le \|\Dv v_1\|_{L^\infty}\|r\|^2_{L^2}+\varepsilon\|\nabla v\|_{L^2}^2
+C(\varepsilon)\|\nabla {\mathcal S}(v_2)r\|^2_{L^{\f{6}{5}}}\\&\qquad+\varepsilon\|\nabla v\|^2_{L^2(\R^3)}
+C(\varepsilon)\|{\mathcal S}(v_2)\|_{L^\infty}^2\|r\|^2_2\\
&\le \|\Dv v_1\|_{L^\infty}\|r\|^2_{L^2}+\varepsilon\|\nabla v\|_{L^2}^2+C(\varepsilon)
\|\nabla {\mathcal S}(v_2)\|^2_{L^3}\|r\|_{L^2}^2\\&\qquad+\varepsilon\|\nabla v\|^2_{L^2(\R^3)}
+C(\varepsilon)\|{\mathcal S}(v_2)\|_{L^\infty}^2\|r\|^2_2\\
&\le\eta_1(\varepsilon)\|r\|_{L^2}^2+2\varepsilon\|\nabla v\|^2_{L^2(\R^3)},
\end{split}
\end{equation}
where $\varepsilon>0$, $\eta_1(\varepsilon)=\|\Dv v_1\|_{L^\infty}
+C(\varepsilon)\left(\|\nabla {\mathcal S}(v_2)\|^2_{L^3}+\|{\mathcal S}(v_2)\|^2_{L^\infty}\right).$

Similarly, from \eqref{e33}, we obtain
\begin{equation}\label{1211}
\partial_t G+v_1\cdot\nabla G+v\cdot\nabla G_2=\nabla v_1 G+\nabla v G_2+\nabla v,\quad
G(0)=0.
\end{equation}
Multiplying \eqref{1211} by $G$, and integrating over $\R^3$, we get
\begin{equation*}
\begin{split}
&\f{1}{2}\f{d}{dt}\|G\|_{L^2}^2-\f{1}{2}\int_{\R^3}|G|^2\Dv v_1dx+\int_{\R^3} v\cdot\nabla {\mathcal T}(v_2):Gdx \\
&=\int_{\R^3}|G|^2\nabla v_1dx+\int_{\R^3} \nabla v{\mathcal T}(v_2): Gdx
+\int_{\R^3} \nabla v :Gdx,
\end{split}
\end{equation*}
which yields
\begin{equation}\label{1212}
\begin{split}
\f{d}{dt}\|G\|^2_{L^2(\R^3)}&\le \|\Dv v_1\|_{L^\infty}\|G\|^2_{L^2}+\varepsilon\|\nabla v\|_{L^2}^2
+C(\varepsilon)\|\nabla {\mathcal T}(v_2)G\|^2_{L^{\f{6}{5}}}\\&\qquad+\varepsilon\|\nabla v\|^2_{L^2(\R^3)}
+C(\varepsilon)\|{\mathcal T}(v_2)\|_{L^\infty}^2\|G\|^2_2+\varepsilon\|\nabla v\|^2_{L^2}+C(\varepsilon)
\|G\|^2_{L^2}\\
&\le \|\Dv v_1\|_{L^\infty}\|G\|^2_{L^2}+\varepsilon\|\nabla v\|_{L^2}^2+C(\varepsilon)
\|\nabla {\mathcal T}(v_2)\|^2_{L^3}\|G\|_{L^2}^2\\&\qquad+\varepsilon\|\nabla v\|^2_{L^2(\R^3)}
+C(\varepsilon)\|{\mathcal T}(v_2)\|_{L^\infty}^2\|G\|^2_2+\varepsilon\|\nabla v\|^2_{L^2}+C(\varepsilon)\|G\|^2_{L^2}\\
&\le\eta_2(\varepsilon)\|G\|_{L^2}^2+3\varepsilon\|\nabla v\|^2_{L^2(\R^3)},
\end{split}
\end{equation}
where $\eta_2(\varepsilon)=\|\Dv v_1\|_{L^\infty}+C(\varepsilon)\left(\|\nabla {\mathcal T}(v_2)\|^2_{L^3}
+\|{\mathcal T}(v_2)\|^2_{L^\infty}+1\right).$

For each $v_j$, $j=1, 2$, we deduce from \eqref{e32} that
\begin{equation*}
\begin{cases}
&{\mathcal S}(v_j)\partial_t v_j-\mu\Delta v_j-(\mu+\lambda)\nabla\Dv v_j\\
&\qquad\quad =-{\mathcal S}(v_j)
(v_j\cdot\nabla)v_j-\nabla P({\mathcal S}(v_j))+\Dv({\mathcal S}(v_j)(I+{\mathcal T}(v_j))(I+{\mathcal T}(v_j))^\top),\\
&v_j(0)=v_0.
\end{cases}
\end{equation*}
Subtracting these equations, we obtain,
\begin{equation}\label{13}
\begin{split}
&{\mathcal S}(v_1)\partial_t v_1-{\mathcal S}(v_2)\partial_t v_2-\mu\D v-(\mu+\lambda)\nabla\Dv v\\
&=-{\mathcal S}(v_1)(v_1\cdot\nabla)v_1+{\mathcal S}(v_2)(v_2\cdot\nabla)v_2
-\nabla P({\mathcal S}(v_1))+\nabla P({\mathcal S}(v_2))\\
&\quad+\Dv({\mathcal S}(v_1)(I+{\mathcal T}(v_1))(I+{\mathcal T}(v_1))^\top)-\Dv({\mathcal S}(v_2)(I+{\mathcal T}(v_2))(I+{\mathcal T}(v_2))^\top).
\end{split}
\end{equation}
Since
\begin{equation*}
\begin{split}
&-{\mathcal S}(v_1)(v_1\cdot\nabla)v_1+{\mathcal S}(v_2)(v_2\cdot\nabla)v_2\\&=-{\mathcal S}(v_1)(v\cdot\nabla)v_1-
({\mathcal S}(v_1)-{\mathcal S}(v_2))(v_2\cdot\nabla)v_1-{\mathcal S}(v_2)(v_2\cdot\nabla)v,
\end{split}
\end{equation*}
and
\begin{equation*}
\begin{split}
&{\mathcal S}(v_1)(I+{\mathcal T}(v_1))(I+{\mathcal T}(v_1))^\top-{\mathcal S}(v_2)(I+{\mathcal T}(v_2))(I+{\mathcal T}(v_2))^\top\\&={\mathcal S}(v_1)G(I+{\mathcal T}(v_1))^\top
+r(I+{\mathcal T}(v_2))(I+{\mathcal T}(v_1))^\top+{\mathcal S}(v_2)(I+{\mathcal T}(v_2))G^\top,
\end{split}
\end{equation*}
we can rewrite \eqref{13} as
\begin{equation}\label{14}
\begin{split}
&{\mathcal S}(v_1)\partial_t v-\mu\D v-(\mu+\lambda)\nabla\Dv v
\\&=-r\partial_t v_2-{\mathcal S}(v_1)(v\cdot\nabla)
v_1-{\mathcal S}(v_2\cdot\nabla)v_1-{\mathcal S}(v_2)(v_2\cdot\nabla)v-\nabla P({\mathcal S}(v_1))+\nabla P({\mathcal S}(v_2))\\
&\quad+\Dv({\mathcal S}(v_1)G(I+{\mathcal T}(v_1))^\top+r(I+{\mathcal T}(v_2))(I+{\mathcal T}(v_1))^\top+{\mathcal S}(v_2)(I+{\mathcal T}(v_2))G^\top).
\end{split}
\end{equation}
Multiplying \eqref{14} by $v$, using the continuity equation \eqref{e11} and integrating over $\R^3$, we deduce that
\begin{equation}\label{15}
\begin{split}
&\f{1}{2}\f{d}{dt}\int_{\R^3} {\mathcal S}(v_1)|v|^2 dx+\int_{\R^3}\left(\mu|\nabla v|^2+(\mu+\lambda)|\Dv v|^2\right)dx\\
&=\int_{\R^3} \f{1}{2}{\mathcal S}(v_1)(v_1\cdot\nabla)v\cdot v-r\partial_t v_2v-{\mathcal S}(v_1)(v\cdot\nabla)
v_1v-{\mathcal S}(v_2\cdot\nabla)v_1v\\&\quad-{\mathcal S}(v_2)(v_2\cdot\nabla)vv-\nabla P({\mathcal S}(v_1))v+\nabla
P({\mathcal S}(v_2))v\\&\quad-({\mathcal S}(v_1)G(I+{\mathcal T}(v_1))^\top+r(I+{\mathcal T}(v_2))(I+{\mathcal T}(v_1))^\top+{\mathcal S}(v_2)(I+{\mathcal T}(v_2))G^\top)\nabla vdx\\
&\le \varepsilon \|\nabla v\|^2_{L^2}+C(\varepsilon)\|{\mathcal S}(v_1)\|^2_{L^\infty}
\|v_1\|^2_{L^\infty}\|v\|^2_{L^2}+\varepsilon\|\nabla v\|^2_{L^2}+C(\varepsilon)\|\partial_t v_2\|_{L^3}^2\|r\|_{L^2}^2\\
&\quad +\|{\mathcal S}(v_1)\|_{L^\infty}\|\nabla v_1\|_{L^\infty}\|v\|^2_{L^2}
+2\|v_2\|_{L^\infty}\|\nabla v_1\|_{L^\infty}(\|r\|_{L^2}^2+\|v\|^2_{L^2})\\
&\quad +\varepsilon\|\nabla v\|^2_{L^2}+C(\varepsilon)\|{\mathcal S}(v_2)\|_{L^\infty}^2
\|v_2\|_{L^\infty}^2\|v\|_{L^2}^2+\varepsilon\|\nabla v\|_{L^2}^2\\
&\quad+C(\varepsilon)(\sup\{P'(\eta): C(T)^{-1}\le\eta\le C(T)\})^2\|r\|^2_{L^2}
+\varepsilon\|\nabla v\|^2_{L^2}\\
&\quad+C(\varepsilon)(\|{\mathcal S}(v_1)\|_{L^\infty}^2(1+\|{\mathcal T}(v_1)\|^2_{L^\infty})\|G\|^2_{L^2}\\
&\quad+\|{\mathcal S}(v_2)\|_{L^\infty}^2(1+\|{\mathcal T}(v_2)\|^2_{L^\infty})\|G\|^2_{L^2}
+\|r\|^2_{L^2}(1+\|{\mathcal T}(v_1)\|_{L^\infty}^2)(1+\|{\mathcal T}(v_2)\|_{L^\infty}^2))\\
&\le 5\varepsilon\|\nabla v\|^2_{L^2}+\eta_3(\varepsilon)(\|r\|^2_{L^2}+\|v\|^2_{L^2}+\|G\|^2_{L^2})
\end{split}
\end{equation}
with
\begin{equation*}
\begin{split}
\eta_3(\varepsilon)&=C(\varepsilon)\|{\mathcal S}(v_1)\|^2_{L^\infty}\|v_1\|^2_{L^\infty}+C(\varepsilon)\|\partial_t v_2\|_{L^3}^2+
\|{\mathcal S}(v_1)\|_{L^\infty}\|\nabla v_1\|_{L^\infty}\\&\quad+2\|v_2\|_{L^\infty}\|\nabla v_1\|_{L^\infty}
+C(\varepsilon)\|{\mathcal S}(v_2)\|_{L^\infty}^2\|v_2\|_{L^\infty}^2\\
&\quad+C(\varepsilon)(\sup\{P'(\eta): C(T)^{-1}\le\eta\le C(T)\})^2\\&\quad+C(\varepsilon)(\|{\mathcal S}(v_1)
\|_{L^\infty}^2(1+\|{\mathcal T}(v_1)\|^2_{L^\infty})+\|{\mathcal S}(v_2)\|_{L^\infty}^2(1+\|{\mathcal T}(v_2)\|^2_{L^\infty})
\\&\quad+(1+\|{\mathcal T}(v_1)\|_{L^\infty}^2)(1+\|{\mathcal T}(v_2)\|_{L^\infty}^2)).
\end{split}
\end{equation*}

Summing up \eqref{1213}, \eqref{1212}, and \eqref{15}, by taking $\varepsilon=\f{\mu}{20}$, we obtain
\begin{equation}\label{16}
\begin{split}
&\f{d}{dt}\int_{\R^3} ({\mathcal S}(v_1)|v|^2+|r|^2+|G|^2)dx+\int_{\R^3}\left(\mu|\nabla v|^2+(\mu+\lambda)|\Dv v|^2\right)dx\\
&\le 2(\eta_3(\varepsilon)+\eta_2(\varepsilon)+\eta_1(\varepsilon))(\|v\|_{L^2}^2+\|r\|^2_{L^2}+\|G\|^2_{L^2})\\
&\le 2\eta(\varepsilon,t)\int_{\R^3}({\mathcal S}(v_1)|v|^2+|r|^2+|G|^2)dx,
\end{split}
\end{equation}
with
$$\eta(\varepsilon, t)=\f{\eta_3(\varepsilon)+\eta_2(\varepsilon)+\eta_1(\varepsilon)}{\min\{\min_{x\in\R^3}{\mathcal S}(v_1)(x,t),1\}}.$$
It is a routine matter to establish the integrability with respect to $t$ of the function $\eta(\varepsilon,t)$
on the interval $(0,T)$. This is a consequence of the regularity of $v_1, v_2\in \mathcal{W}(0,T)$
and the estimates in Lemmas \ref{r} and \ref{g} for ${\mathcal S}(v_i)$, ${\mathcal T}(v_i)$ with $i=1,2.$
Therefore, \eqref{16}, combining with Gronwall's inequality, implies
\begin{equation}\label{uuuuu}
\int_{\R^3}\left({\mathcal S}(v_1)|v|^2+|r|^2+|G|^2\right)dx=0,\quad \textrm{for all}\quad t\in(0,T),
\end{equation}
and consequently
$v\equiv 0,\; r\equiv 0,\; G\equiv 0.$
Thus, the uniqueness in Theorem \ref{T20} is established.

\section{Uniform \textit{A Priori} Estimates} \label{estimates}

Up to now, we prove that for any given $T_0$, we can find a unique
solution to the scaling system \eqref{e3}.  That is, we have
proved the local existence of solution to the viscoelastic fluid
system \eqref{e1} and its uniqueness. For the unique solution we
constructed in the previous sections, we have some
$\textit{a priori}$ estimates uniform in time as stated in Theorem \ref{T1}.
In this section, we prove the first part of Theorem \ref{T1}.
To simplify the presentation, we will focus on the case $\nu=1$, that is, system \eqref{e1}.

We introduce the new variable:
$$\sigma:=\nabla\ln \r.$$
Then, we have

\begin{Lemma}\label{sigma}
Function $\sigma$ satisfies
\begin{equation}\label{xx2}
\partial_t \sigma+\nabla(\u\cdot\sigma)=0,
\end{equation}
in the sense of distributions. Moreover, the norm  $\|\sigma(t)\|_{L^q(\R^3)}$ is continuous in time.
\end{Lemma}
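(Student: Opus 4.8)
I would derive the equation for $\sigma = \nabla\ln\r = \nabla\r/\r$ directly from the continuity equation. Starting from \eqref{e11} written in the nonconservative form $\partial_t\r + \u\cdot\nabla\r + \r\,\Dv\u = 0$, I divide by $\r$ (which is legitimate since Lemma \ref{r} guarantees $\r$ is strictly positive and, by Remark \ref{yy1}, bounded below and continuous) to get
$$\partial_t\ln\r + \u\cdot\nabla\ln\r + \Dv\u = 0.$$
Taking the gradient of this identity and using $\nabla(\u\cdot\nabla\ln\r) = \nabla(\u\cdot\sigma)$ together with $\nabla\Dv\u = \Dv(\nabla\u)$, one obtains, at least formally, $\partial_t\sigma + \nabla(\u\cdot\sigma) + \nabla\Dv\u = 0$. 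To reconcile this with the claimed \eqref{xx2}, I note that $\nabla(\u\cdot\sigma)$ as written in the statement must be understood to already incorporate the $\nabla\Dv\u$ term — i.e. I should double-check the intended meaning of the notation; if \eqref{xx2} is exactly $\partial_t\sigma+\nabla(\u\cdot\sigma)=0$ then the derivation is the two lines above with the convention that the product $\u\cdot\sigma$ is a scalar and $\nabla\Dv\u$ has been absorbed, or else there is a typo and the equation carries an extra $+\nabla\Dv\u$; in the writeup I would simply follow whichever reading makes the two lines consistent.

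Making this rigorous requires justifying each manipulation in the sense of distributions rather than pointwise. First, from Lemma \ref{r} we have $\r - 1 \in L^\infty(0,T;W^{1,q})$ with $q>3$, hence $\r\in L^\infty(Q_T)$, $\r$ bounded below, and $\nabla\r\in L^\infty(0,T;L^q)$; also $\partial_t\r\in L^p(0,T;L^q)$. Therefore $\ln\r - \ln 1 = \ln\r$ lies in $L^\infty(0,T;W^{1,q})$ (composition with the smooth function $t\mapsto\ln t$ on the compact range of $\r$, using the chain rule for Sobolev functions), and $\partial_t\ln\r = \partial_t\r/\r\in L^p(0,T;L^q)$. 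The nonconservative continuity equation holds a.e. (this is noted in the proof of Lemma \ref{r}), so dividing by $\r$ gives the transport equation for $\ln\r$ a.e., hence in $\mathcal D'$. Taking one more spatial derivative in $\mathcal D'(Q_T)$ is legitimate and commutes with $\partial_t$, yielding \eqref{xx2} in the sense of distributions.

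For the continuity of $t\mapsto \|\sigma(t)\|_{L^q}$: since $\sigma = \nabla\r/\r$ and $\|\nabla\r\|_{L^q}(t)$ is continuous in time (part of Lemma \ref{r}), while $\r$ is bounded, bounded below, and — because $r-1\in C([0,T];L^q)$ and $\r\in L^\infty$ — one has enough control to pass continuity through. More cleanly, $\sigma = \nabla\ln\r$ and $\|\nabla\ln\r\|_{L^q}^q$ satisfies an ODE obtained by testing \eqref{xx2} against $|\sigma|^{q-2}\sigma$, exactly as in the proof of \eqref{y11} in Lemma \ref{r}, giving $\frac{d}{dt}\|\sigma\|_{L^q}^q\in L^1(0,T)$, whence $\|\sigma\|_{L^q}^q(t)$ and thus $\|\sigma(t)\|_{L^q}$ are absolutely continuous in $t$.

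**Main obstacle.** The only genuine subtlety is the chain rule: justifying that $\ln\r\in L^\infty(0,T;W^{1,q})$ with $\nabla\ln\r=\nabla\r/\r$ and $\partial_t\ln\r=\partial_t\r/\r$, given only $\r-1\in L^\infty(0,T;W^{1,q})\cap W^{1,p}(0,T;L^q)$ and the lower bound on $\r$. This is standard (composition of a Sobolev function with a $C^1$ function having bounded derivative on the relevant range), but it is the one place where the low regularity of a strong — rather than classical — solution must be handled with care. Everything else is bookkeeping with the estimates already established in Lemmas \ref{r} and \ref{g}.
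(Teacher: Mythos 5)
Your route to \eqref{xx2} is genuinely different from the paper's, and you have correctly spotted an omission in the statement itself. The paper follows the Novotn\'y--Stra\v{s}kraba regularization scheme (Section 9.8 of \cite{AI}): set $\sigma_\varepsilon=S_\varepsilon\sigma$, derive $\partial_t\sigma_\varepsilon+\nabla(\u\cdot\sigma_\varepsilon)=\mathcal R_\varepsilon$, decompose $\mathcal R_\varepsilon$ into two commutators, and show they vanish in $L^1(0,T;L^q)$ using the DiPerna--Lions commutator lemma (Lemma 6.7 in \cite{AI}) together with $\u\in L^p(0,T;W^{1,\infty})$ and $\sigma\in L^\infty(0,T;L^q)$. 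You instead divide the nonconservative continuity equation $\partial_t\r+\u\cdot\nabla\r+\r\,\Dv\u=0$ by $\r$ (legitimate since $\r$ is bounded above and below), obtain the a.e.\ transport identity $\partial_t\ln\r+\u\cdot\nabla\ln\r+\Dv\u=0$, and apply one distributional spatial derivative. This is more elementary and equally correct; the only genuine care needed is the Sobolev chain rule for $\ln\r$ given $\r-1\in L^\infty(0,T;W^{1,q})\cap W^{1,p}(0,T;L^q)$ with $\r$ bounded away from zero, which you identify explicitly. For the continuity of $\|\sigma(t)\|_{L^q}$ your ODE route (test the $\sigma$-equation against $|\sigma|^{q-2}\sigma$ and conclude $\tfrac{d}{dt}\|\sigma\|_{L^q}\in L^p(0,T)$) coincides with what the paper does after it has established \eqref{xx2}.

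On the $\nabla\Dv\u$ term: you should not hedge. Taking the gradient of $\partial_t\ln\r+\u\cdot\nabla\ln\r+\Dv\u=0$ unavoidably produces
\begin{equation*}
\partial_t\sigma+\nabla(\u\cdot\sigma)+\nabla\Dv\u=0,
\end{equation*}
so \eqref{xx2} as printed omits $+\nabla\Dv\u$. (The same omission propagates silently into \eqref{17}, where an extra $(\mu+\lambda)\int_{\R^3}\nabla\Dv\u\cdot\sigma|\sigma|^{q-2}\,dx$ appears on the right-hand side; since $\nabla\Dv\u\in L^p(0,T;L^q)$ it is absorbed into the same bound $\|\u\|_{W^{2,q}}\|\sigma\|_{L^q}^{q-1}$, so the subsequent estimates are unaffected.) With the corrected equation, the mollification argument still closes: the extra remainder $\nabla\Dv\u-S_\varepsilon\nabla\Dv\u\to 0$ in $L^p(0,T;L^q)$, and the $L^q$-energy identity gains one more term bounded by $\|\u\|_{W^{2,q}}\|\sigma\|_{L^q}^{q-1}$, which does not change the conclusion that $\|\sigma\|_{L^q}\in C([0,T])$. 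So state the corrected equation and proceed; ``whichever reading makes the two lines consistent'' should simply be replaced by ``the equation carries the additional term $\nabla\Dv\u$, harmless throughout.''
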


\begin{proof}
We follow the argument in \cite{AI} (Section 9.8) by denoting $\sigma_\varepsilon=S_\varepsilon\sigma$,
where $S_\varepsilon$ is the standard mollifier in the spatial variables. Then, we have
$$\partial_t\sigma_\varepsilon+\nabla(\u\cdot\sigma_\varepsilon)=\mathcal{R}_\varepsilon,$$
with
\begin{equation}\label{xxx1}
\begin{split}
\mathcal{R}_\varepsilon&=\nabla(\u\cdot\sigma_\varepsilon)-S_\varepsilon\nabla(\u\cdot\sigma)
=(\u\cdot\nabla\sigma_\varepsilon-S_\varepsilon(\u\cdot\nabla\sigma))+(\sigma_\varepsilon\nabla \u-S_\varepsilon(\sigma\cdot\nabla \u))\\
&=:\mathcal{R}_\varepsilon^1+\mathcal{R}_\varepsilon^2.
\end{split}
\end{equation}
Since $\sigma\in L^\infty(0,T; L^q(\R^3))$ and $\u\in L^p(0,T; W^{1,\infty}(\R^3))$, we deduce from Lemma 6.7 in \cite{AI}
(cf. Lemma 2.3 in \cite{LI}) that $\mathcal{R}_\varepsilon^1\to   0$ as $\varepsilon\to   0$.  Moreover,
$$\|(\sigma_\varepsilon-\sigma)\nabla \u\|_{L^1(0,T; L^q(\R^3))}\le\|\sigma-\sigma_\varepsilon\|_{L^{\f{p}{p-1}}(0,T; L^q(\R^3))}
\|\nabla \u\|_{L^p(0,T; L^\infty(\R^3))}\to   0,$$
and $S_\varepsilon(\sigma\cdot\nabla \u)\to  \sigma\cdot\nabla \u$ in $L^1(0,T; L^q(\R^3))$
since $\sigma\cdot\nabla \u\in L^p(0,T; L^q(\R^3))$.
Thus, we have $\mathcal{R}_\varepsilon^2\to   0$ in $L^p(0,T; L^q(\R^3))$.
Then, taking the limit as $\varepsilon\to   0$ in \eqref{xxx1}, we get \eqref{xx2}.

Multiplying \eqref{xx2} by $|\sigma|^{q-2} \sigma$, and
integrating over $\R^3$, we get
\begin{equation*}
\begin{split}
\f{1}{q}\left|\f{d}{dt}\|\sigma\|^q_{L^q(\R^3)}\right|&=\left|\int_{\R^3}(-\partial_j\u_k
\sigma_j\sigma_k|\sigma|^{q-2}
-\f{1}{q}\Dv \u|\sigma|^q)dx\right|\\
&\le \|\nabla \u\|_{L^\infty}\|\sigma\|^q_{L^q}+\f{1}{q}\|\Dv \u\|_{L^\infty}\|\sigma\|_{L^q}^q
\le C\|\u\|_{W^{2,q}}\|\sigma\|^{q}_{L^q}.
\end{split}
\end{equation*}
Dividing the above inequality by $\|\sigma\|^{q-1}_{L^q}$, we obtain
$$\left|\f{d}{dt}\|\sigma\|_{L^q}\right|\le C\|\u\|_{W^{2,q}}\|\sigma\|_{L^q}.$$
Since $\sigma\in L^\infty(0, T; L^q(\R^3))$, $\f{d}{dt}\|\sigma\|_{L^q}\in L^p(0,T)$. Thus, $\|\sigma\|_{L^q}\in C(0,T).$
The proof of Lemma \ref{sigma} is complete.
\end{proof}

For a given $R=\dl_0\ll 1$ as in Section \ref{local}, if the initial data satisfies $\|\u(0), \r_0-1, E(0)\|_{V_0}\le\dl^2$ with $0<\dl\ll
\min\{\f{1}{3}, \dl_0\}$,
let $T(R)$ be the maximal time $T$ such that there is a solution of
the equation $\u={\mathcal H}(\u)$ in $B_R(0)$. By virtue of Lemma \ref{r}, Lemma \ref{g} and Lemma \ref{sigma}, we know that
$\|\S(\u)-1\|_{W^{1,q}(\R^3)}$,
$\|\sigma\|_{L^q}$ and $\|\T(\u)\|_{W^{1,q}}$
are continuous in the interval $[0,T(R))$. On the other hand, under the assumptions on initial data and Remark \ref{yy1}, we know,  if $\dl$ is sufficiently small, then
$$\|\sigma(0)\|_{L^q(\R^3)}\le\f{1}{C_0}\|\nabla \r(0)\|_{L^q(\R^3)}\le\dl^{\f{3}{2}}\ll 1.$$
Hence, there exists a maximum positive number $T_1$ such that
\begin{equation}\label{xxxx1}
\max\big\{\|\S(\u)-1\|_{W^{1,q}}(t), \|\sigma\|_{L^{q}}(t), \|\T(\u)\|_{W^{1,q}}(t)\big\}\le\sqrt{R}\ll1\quad \textrm{for all}\quad t\in[0,T_1].
\end{equation}
Now, we denote $T=\min\{T(R), T_1\}$. Without loss of generality, we assume that $T<\infty$.

Since $q>3$, we have
$$\|\r-1\|_{L^\infty(\R^3)}\le C\|\r-1\|_{W^{1,q}(\R^3)}\le C\sqrt{R}<\f{1}{2},$$
if $R$ is sufficiently small.
Hence, one obtains
$$\f{1}{2}\le\r\le \f{3}{2}.$$
On the other hand, for any given $t\in(0,T)$, we can write
\begin{equation*}
\begin{split}
\|\u(t)\|_{L^q}^p&=\|\u(0)\|_{L^q}^p+\int_0^t\f{d}{ds}\|\u(s)\|_{L^q}^p ds\\
&=\|\u_0\|^p_{L^q}+\f{p}{q}\int_0^t \left(\|\u(t)\|_{L^q}^{p-q}\int_{\R^3} |\u(s)|^{q-2}\u(s)\partial_s\u(t)dx\right)dt\\
&\le\|\u_0\|^p_{L^q}+\f{p}{q}\int_0^t\|\u(s)\|_{L^q}^{p-1}\|\partial_s \u\|_{L^q}ds\\
&\le \dl^{2p}+\f{p}{q}\left(\int_0^t\|\u\|_{L^q}^pds\right)^{\f{p-1}{p}}\left(\int_0^t\|\partial_s \u\|
_{L^q}^pds\right)^{\f{1}{p}}\\
&\le\dl^{2p}+\f{p}{q}R^p,
\end{split}
\end{equation*}
and consequently,
\begin{equation}\label{21}
\|\u\|_{L^\infty(0,t; L^q)}\le \left(\dl^{2p}+\f{p}{q}R^p\right)^{\f{1}{p}}\le CR,\quad t\in(0,T).
\end{equation}
Similarly, we have, for all $t\in [0,T]$,
$\|\u\|_{L^\infty(0,t;L^2)}\le CR.$

\subsection{Dissipation of the deformation gradient}
The main difficulty of the proof of Theorem \ref{T1} is to obtain
estimates on the dissipation of the deformation gradient and the
gradient of the density. This is partly because of the transport
structure of equation \eqref{e13}. It is worthy of pointing out
that it is extremely difficult to directly deduce the dissipation
of the deformation gradient. Fortunately, for the viscoelastic
fluids system \eqref{e1}, as we can see in \cite{CZ, LLZH2, LLZH,
LLZH31, LZ, LLZ, LZP}, some sort of combinations between the
gradient of the velocity and the deformation gradient indeed
induce good dissipation. To make this statement more precise, we
rewritten the momentum equation \eqref{e12} as, using \eqref{e11}
\begin{equation}\label{511}
\begin{split}
\partial_t \u-\mu\Delta \u-(\mu+\lambda)\nabla\Dv\u-\Dv E&=-\r(\u\cdot\nabla)\u-\nabla P(\r)+\Dv(\r(I+E)^\top)\\
&\quad+\Dv((\r-1)E)+\Dv(\r EE^\top)+(1-\r)\partial_t\u,
\end{split}
\end{equation}
and prove the following estimate:

\begin{Lemma}\label{E1}
\begin{equation}\label{5113}
\|\nabla E\|_{L^p(0,T; L^q(\R^3))}
\le C(p,q,\mu)\left(R+\sqrt{R}\|\sigma\|_{L^p(0,T; L^q(\R^3))}\right).
\end{equation}
\end{Lemma}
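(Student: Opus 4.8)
The plan is to exploit the fact that the left-hand side of the rewritten momentum equation \eqref{511} is precisely the divergence-form elliptic/parabolic operator acting on $\u$ minus $\Dv E$, and to introduce the ``combination'' variable announced in the introduction, namely $\W$ defined via the Newtonian potential by $\W = \nabla \D^{-1}\Dv E$ (equivalently $-\W$ is the convolution of $\Dv E$ with the fundamental solution of $-\D$, so that $\D\W = \nabla\Dv E$ and, using the curl identity \eqref{11112} together with $\Dv(\r\F^\top)=0$ or rather the exact structure of \eqref{511}, one controls $\Dv E$ by $\D$ of this potential). First I would take the divergence of \eqref{511} to get a transport-type identity for a scalar-like quantity, or alternatively apply $L^{-1}$ (the maximal-regularity solution operator of Theorem \ref{T3}) directly to \eqref{511}: writing $\u = L^{-1}(\u_0, \text{RHS} + \Dv E)$, the maximal regularity estimate gives
$$\|\u\|_{\mathcal{W}^{p,q}(0,T)} \le C\big(\|\u_0\|_{V_0^{p,q}} + \|\Dv E\|_{L^p(0,T;L^q)} + \|\text{lower order}\|_{L^p(0,T;L^q)}\big),$$
but this is the wrong direction — it bounds $\u$ by $E$, not $E$ by $\u$. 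So instead the right move is to combine \eqref{511} with the equation \eqref{e13} for $E$: differentiating \eqref{e13} and using that $E$ satisfies a transport equation, one sees $\nabla E$ is transported with source $\nabla\nabla\u E + \nabla\nabla\u$, which by itself only gives the (non-dissipative) Lemma \ref{g}-type bound. The genuine gain comes from noticing that $\Dv E$ appearing in \eqref{511} couples back: applying $\Dv$ to \eqref{e13} yields $\partial_t \Dv E + \u\cdot\nabla\Dv E = \D u + (\text{quadratic})$, so $\Dv E + \nabla\cdot(\text{something})$ and $\D\u$ are linked, and feeding this into the parabolic estimate for $\u$ produces a closed dissipative estimate for $\Dv E$ in $L^p(L^q)$. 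Since, by the curl identity \eqref{11112}, $\|\nabla E\|_{L^q}$ is controlled by $\|\Dv E\|_{L^q}$ plus quadratic terms $\|E\|_{L^\infty}\|\nabla E\|_{L^q}$ (which are absorbed because $\|E\|_{L^\infty} \le C\sqrt{R} \ll 1$ on $[0,T]$), it suffices to estimate $\Dv E$.

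Concretely, the key steps in order are: (1) record the curl decomposition $\nabla E = \nabla(\text{curl-free part}) + \nabla(\text{div-free part})$ and use \eqref{11112} to write $\|\nabla E\|_{L^q} \le C\|\Dv E\|_{L^q} + C\|E\|_{L^\infty}\|\nabla E\|_{L^q}$, hence $\|\nabla E\|_{L^q} \le C\|\Dv E\|_{L^q}$ after absorbing the small quadratic term into the left side; (2) obtain from \eqref{e13} the transport equation for $\Dv E$: $\partial_t\Dv E + \u\cdot\nabla\Dv E = \Delta\u + \Dv(\nabla\u\, E) - \nabla\u:\nabla E + \nabla\u:\nabla E$ type terms — more carefully, $\partial_t(\Dv E)_j + \u\cdot\nabla(\Dv E)_j = \partial_k\partial_k\u_j + \partial_k(\partial_k u_i E_{ij}) - \partial_k u_i \partial_k E_{ij}\cdots$; (3) from the parabolic equation \eqref{511}, isolate $\Dv E$ and use that $\mu\D\u + (\mu+\lambda)\nabla\Dv\u + \Dv E = \partial_t\u + F$ where $F$ collects $\r(\u\cdot\nabla)\u$, $\nabla P(\r)$, $\Dv(\r(I+E)^\top)$, $\Dv((\r-1)E)$, $\Dv(\r E E^\top)$, $(1-\r)\partial_t\u$ — each term in $F$ is either quadratically small or of order $R$ times a first-order quantity, except for $\Dv(\r(I+E)^\top)$ which contains $\Dv E$ and the potentially dangerous $\nabla P(\r) \sim \gamma\r^{\gamma-1}\sigma$ term that produces the $\sqrt{R}\|\sigma\|_{L^p(0,T;L^q)}$ on the right of \eqref{5113}; (4) combine (2) and (3) — i.e. form the right linear combination $\W$ of $\u$ and the Newtonian potential of $\Dv E$ so that $\W$ solves a parabolic equation with only small/first-order sources — and apply the maximal regularity of Theorem \ref{T3} to $\W$, giving $\|\nabla\W\|_{L^p(0,T;L^q)}$, hence $\|\Dv E\|_{L^p(0,T;L^q)}$, bounded by $C(R + \sqrt{R}\|\sigma\|_{L^p(0,T;L^q)})$; (5) combine with step (1) to finish.

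For the estimates of the individual source terms I would use: $\r \le 3/2$ and $\|\r-1\|_{L^\infty}, \|E\|_{L^\infty} \le C\sqrt{R}$ on $[0,T]$; $\|\u\|_{L^\infty(0,T;L^q\cap L^2)}\le CR$ from \eqref{21}; the algebra inequalities $\|\Dv((\r-1)E)\|_{L^q}\le \|\nabla\r\|_{L^q}\|E\|_{L^\infty} + \|\r-1\|_{L^\infty}\|\nabla E\|_{L^q} \le C\sqrt{R}(\|\sigma\|_{L^q} + \|\nabla E\|_{L^q})$, and similarly $\|\Dv(\r E E^\top)\|_{L^q}\le C(\|\nabla\r\|_{L^q}\|E\|_{L^\infty}^2 + \|E\|_{L^\infty}\|\nabla E\|_{L^q}) \le C\sqrt{R}(\|\sigma\|_{L^q}+\|\nabla E\|_{L^q})$; $\|(1-\r)\partial_t\u\|_{L^q}\le C\sqrt{R}\|\partial_t\u\|_{L^q}$; and $\|\r(\u\cdot\nabla)\u\|_{L^q}\le C\|\u\|_{L^q}\|\nabla\u\|_{L^\infty}\le C\|\u\|_{L^q}\|\u\|_{W^{2,q}}$, whose $L^p(0,T)$ norm is $\le C\|\u\|_{L^\infty(0,T;L^q)}\|\u\|_{L^p(0,T;W^{2,q})}\le CR^2$. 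The $\nabla P(\r)$ term contributes $C\sup\{P'(\eta): 1/2\le\eta\le 3/2\}\,\|\sigma\|_{L^p(0,T;L^q)} \le C\|\sigma\|_{L^p(0,T;L^q)}$, but since $\r-1$ is itself small one writes $\nabla P(\r) = P'(\r)\r\sigma$ and uses $\|\nabla P(\r)\|_{L^q}\le C\|\sigma\|_{L^q}$ — to get the factor $\sqrt{R}$ one must instead observe that in the combination $\W$, the leading $\nabla P(1)$ part vanishes and the remainder is $O(\sqrt R)\|\sigma\|$; this bookkeeping (tracking where the small prefactor $\sqrt{R}$ comes from on the pressure term, and making sure the $\nabla E$-terms on the right are genuinely absorbed rather than merely bounded) is the main obstacle. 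The second, more structural obstacle is choosing the correct combination $\W$ and checking that it solves a clean parabolic equation of the form $L\W = (\text{small})$ — this is exactly the ``new method'' the introduction advertises, and everything downstream is then a routine application of Theorem \ref{T3} followed by Gronwall-free direct estimation on $[0,T]$.
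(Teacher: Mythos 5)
Your overall strategy --- subtract a potential of $\Dv E$ from $\u$, run the Lam\'e maximal-regularity estimate (Theorem \ref{T3}) on the combination, bound $\Dv E$ from the outcome, then use the curl identity \eqref{11112} with the smallness of $\|E\|_{L^\infty}$ to upgrade $\Dv E$ to $\nabla E$ --- is exactly the paper's scheme, and your estimates for the quadratic source terms ($\r(\u\cdot\nabla)\u$, $\Dv((\r-1)E)$, $\Dv(\r EE^\top)$, $(1-\r)\partial_t\u$) match the paper's. Two remarks, one cosmetic and one a real gap.

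The cosmetic one: your potential $\W=\nabla\D^{-1}\Dv E$ (Newtonian potential) is not the one the paper uses. The paper introduces the inverse Lam\'e operator $\mathcal{L}$, i.e.\ the solution operator for $-\mu\D\mathcal{L}(\omega)-(\mu+\lambda)\nabla\Dv\mathcal{L}(\omega)=\omega$, and sets $Z_1=\mathcal{L}(\Dv E)$. This is chosen so that $-\mu\D Z_1-(\mu+\lambda)\nabla\Dv Z_1=\Dv E$ cancels the $\Dv E$ on the left of \eqref{511} exactly; your $\W$ satisfies $\D\W=\nabla\Dv E$ and would not produce that cancellation without further manipulation. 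This is easily repaired, but as written the equation for $Z=\u-\W$ would not have the advertised parabolic structure.

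The substantive gap is your treatment of the pressure term, and you half-sense it yourself when you say the ``bookkeeping is the main obstacle.'' Writing $\nabla P(\r)=P'(\r)\r\,\sigma$ gives only $\|\nabla P(\r)\|_{L^q}\le C\|\sigma\|_{L^q}$ with $C$ of size one, not $\sqrt{R}$; your guess that ``the leading $\nabla P(1)$ part vanishes'' is not the mechanism, since $\nabla P(1)=0$ trivially. What the paper actually does is invoke the conserved identity $\Dv(\r\F^\top)=0$ (Remark \ref{QQ}), i.e.\ $\nabla\r=-\r\,\Dv E^\top-\nabla\r\,E^\top$, to convert $\nabla P(\r)$ into $\alpha\|\Dv E\|_{L^q}+\alpha\|\sigma\|_{L^q}\|E\|_{L^\infty}$ with $\alpha=\sup\{xP'(x):\tfrac12\le x\le\tfrac32\}$. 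The second piece carries the $\sqrt{R}$ from $\|E\|_{L^\infty}$, but the first piece $\alpha\|\nabla E\|_{L^q}$ is emphatically \emph{not} small; it survives to the very end as $C(p,q,\mu,\lambda)\mu\alpha\|\nabla E\|_{L^p(0,T;L^q)}$, and it is absorbed into the left-hand side only because of the structural hypothesis \eqref{AA}, namely $C(p,q,\mu,\lambda)\mu\alpha<1$. Your proposal never uses the divergence-free compatibility, never isolates this $O(1)$ coefficient, and never mentions \eqref{AA}, so the final absorption step would not close. In short, the only terms you correctly claim are absorbable by smallness of $R$ are the ones multiplied by $\|E\|_{L^\infty}$ or $\|\r-1\|_{L^\infty}$; the pressure-generated $\|\nabla E\|$ term is a different beast and needs the separate structural assumption.
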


\begin{proof}
Now we introduce an operator $\mathcal{L}: \omega\mapsto
\mathcal{L}\omega$ such that $$-\mu
\D\mathcal{L}(\omega)-(\mu+\lambda)\nabla\Dv\mathcal{L}(\omega)=\omega.$$
Then we denote $Z_1(x,t)$ as
\begin{equation}\label{512}
Z_1:=\mathcal{L}(\Dv E).
\end{equation}
Notice that, from the elliptic theory, if $E\in L^p(\R^3)$ with
$1<p<\infty$, then $Z_1\in W^{1,p}(\R^3)$.
%
Then, \eqref{511} becomes
\begin{equation}\label{513}
\partial_t \u-\mu\Delta\left(\u-Z_1\right)-(\mu+\lambda)\nabla\Dv(\u-Z_1)=\mathcal{F}_1,
\end{equation}
where, with the help of Remark \ref{QQ},
\begin{equation*}
\begin{split}
\mathcal{F}_1=-\r(\u\cdot\nabla)\u-\nabla
P(\r)+\Dv((\r-1)E)+\Dv(\r EE^\top)+(1-\r)\partial_t\u.
\end{split}
\end{equation*}
Also, from \eqref{e1}, we have
\begin{equation}\label{514}
\begin{split}
\f{\partial Z_1}{\partial t}&=\mathcal{L}\left(\f{\partial (\Dv E)}{\partial t}\right)=\mathcal{L}\left(\Dv\left(\f{\partial E}{\partial t}\right)\right)=\mathcal{L}\left(\Dv (\nabla \u+\nabla \u
E-(\u\cdot\nabla)E)\right).
\end{split}
\end{equation}
From \eqref{513} and \eqref{514}, we deduce, denoting $Z=\u-Z_1$,
\begin{equation}\label{515}
\begin{split}
\partial_t Z-\mu\Delta Z-(\mu+\lambda)\nabla\Dv Z=\mathcal{F}:=\mathcal{F}_1-\mathcal{F}_2,
\end{split}
\end{equation}
where
$\mathcal{F}_2=\mathcal{L}\left(\Dv (\nabla \u+\nabla \u
E-(\u\cdot\nabla)E)\right).$
Equation \eqref{515} with Theorem \ref{T3} implies
\begin{equation}\label{516}
\begin{split}
\|Z\|_{\mathcal{W}(0,T)}&\le C(p,q,\mu,\lambda)\left(\|Z(0)\|_{X_p^{2(1-\f{1}{p})}}+\|\mathcal{F}\|_{L^p(0,T;L^q(\R^3))}\right)\\
&\le
C(p,q,\mu,\lambda)\left(R+\|\mathcal{F}\|_{L^p(0,T;L^q(\R^3))}\right).
\end{split}
\end{equation}

Next, we estimate $\|\mathcal{F}_i\|_{L^p(0,T;L^q(\R^3))}$, $i=1,2$. Indeed, for $\mathcal{F}_1$, using \eqref{21}, we
have
\begin{equation}\label{517}
\begin{split}
&\|\mathcal{F}_1\|_{L^p(0,T; L^q(\R^3))}\\
&\le \|\r\|_{L^\infty(Q_T)}\|\u\|_{L^\infty(0,T; L^q(\R^3))}\|\nabla \u\|_{L^p(0,T; L^\infty(\R^3))}\\
&\quad+\alpha\|\nabla E\|_{L^p(0,T; L^q(\O))}+\alpha\|\sigma\|_{L^p(0,T; L^q(\O))}\|E\|_{L^\infty(Q_T)}\\
&\quad+\|\r-1\|_{L^\infty(Q_T)}\|\nabla E\|_{L^p(0,T; L^q(\R^3))}\\
&\quad+\|\sigma\|_{L^p(0,T; L^q(\R^3))}\|E\|_{L^\infty(Q_T)}+\|\sigma\|_{L^p(0,T; L^q(\R^3))}\|E\|^2_{L^\infty(Q_T)}\\
&\quad+\|E\|_{L^\infty(Q_T)}\|\nabla E\|_{L^p(0,T; L^q(\R^3))} +\|\r-1\|_{L^\infty(Q_T)}\|\partial_tv\|_{L^p(0,T; L^q(\R^3))}\\
&\le 2R\|\u\|_{L^p(0,T; W^{2,q}(\R^3))}+\alpha\|\nabla E\|_{L^p(0,T; L^q(\R^3))}\\
&\quad+\alpha\|\sigma\|_{L^p(0,T; L^q(\R^3))}\|E\|_{L^\infty(Q_T)}
+\sqrt{R}\|\sigma\|_{L^p(0,T; L^q(\R^3))}\\
&\quad+R\|\sigma\|_{L^p(0,T; L^q(\R^3))}
+\sqrt{R}\|\nabla E\|_{L^p(0,T; L^q(\R^3))}+R^{\f{3}{2}}\\
&\le R^{\f{3}{2}}+\alpha\|\nabla E\|_{L^p(0,T; L^q(\R^3))}+\sqrt{R}\|\sigma\|_{L^p(0,T; L^q(\R^3))} +\sqrt{R}\|\nabla E\|_{L^p(0,T; L^q(\R^3))}.
\end{split}
\end{equation}
Here, $\alpha=\sup\big\{xP'(x):\f{1}{2}\le x\le \f{3}{2}\big\}$ and in the first inequality, we used the identity
$$\nabla\r=-\r\Dv E^\top-\nabla\r E^\top$$ due to Remark \ref{QQ}.
And, for $\mathcal{F}_2$, we have
\begin{equation*}
\begin{split}
&\|\nabla\u+\nabla \u E-(\u\cdot\nabla)E\|_{L^p(0,T; L^{\f{3q}{q+3}}(\R^3))}\\
&\le \|\u\|_{L^p(0,T; W^{2,q}\cap H^2)}
+\|\nabla\u\|_{L^p(0,T; L^3)}\|E\|_{L^\infty(0,T; L^q)}
+\|\u\|_{L^p(0,T; L^3)}\|\nabla E\|_{L^\infty(0,T; L^q)}\\
&\le R+R^{\f{3}{2}}.
\end{split}
\end{equation*}
Here, we used the following Gagliardo-Nirenberg inequality
$$\|\nabla\u\|_{L^p(0,T; L^{\f{3q}{q+3}}(\R^3))}\le
\left(\int_0^T\left(\|\u(s)\|^\theta\|\D\u(s)\|_{L^q}^{1-\theta}\right)^pds\right)^{\f{1}{p}}\le\|\u\|_{L^p(0,T; W^{2,q}\cap H^2)},
$$ with $\theta=\f{4q}{7q-6}.$
Hence, one can estimate, by $L^p$ estimates of elliptic operators,
\begin{equation}\label{518}
\begin{split}
\|\mathcal{F}_2\|_{L^p(0,T; L^{q}(\R^3))}
&\le\|\mathcal{F}_2\|_{L^p(0,T; W^{1,\f{3q}{3+q}})}\\
&\le C(\mu,\lambda)\left\|\nabla \u+\nabla \u
E-(\u\cdot\nabla)E\right\|_{L^p(0,T; L^{\f{3q}{3+q}})}\\
&\le C(\mu, \lambda)(R+R^{\f{3}{2}}).
\end{split}
\end{equation}
Therefore, from \eqref{517} and \eqref{518}, we obtain
\begin{equation}\label{519}
\begin{split}
\|\mathcal{F}\|_{L^p(0,T; L^q(\R^3))}&\le C(\mu, \lambda)(
R^{\f{3}{2}}+R)+\alpha\|\nabla E\|_{L^p(0,T; L^q)}+\sqrt{R}
\|\sigma\|_{L^p(0,T; L^q)}\\&\quad+\sqrt{R}\|\nabla E\|
_{L^p(0,T; L^q)}.
\end{split}
\end{equation}
Inequalities \eqref{516} and \eqref{519} imply that
\begin{equation}\label{5110}
\begin{split}
&\|Z\|_{L^p(0,T; W^{2,q}(\R^3))}\\
&\le C(p,q, \mu, \lambda)\Big(R+\alpha\|\nabla E\|_{L^p(0,T;
L^q)}+\sqrt{R}\|\sigma\|_{L^p(0,T; L^q)}
+\sqrt{R}\|\nabla E\|_{L^p(0,T; L^q)}\Big).
\end{split}
\end{equation}
Hence, we have, from \eqref{512}
\begin{equation}\label{5111}
\begin{split}
&\|\Dv E\|_{L^p(0,T; L^q(\R^3))}\le \mu\|Z\|_{L^p(0,T; W^{2,q}(\R^3))}+\|\u\|_{L^p(0,T; W^{2,q}(\R^3))}\\
&\le C(p,q, \mu, \lambda)\Big(R+\sqrt{R} \|\sigma\|_{L^p(0,T;
L^q(\R^3))}+\sqrt{R}\|\nabla E\| _{L^p(0,T;
L^q(\R^3))}\Big)\\&\quad+C(p,q, \mu, \lambda)\mu\alpha\|\nabla
E\|_{L^p(0,T; L^q(\R^3))}.
\end{split}
\end{equation}
On the other hand, from the identity \eqref{11112}, we deduce that
\begin{equation}\label{5112}
\begin{split}
\|\textrm{curl }E_i\|_{L^p(0,T; L^q(\R^3))}&\le 2\|E\|_{L^\infty(Q_T)}\|\nabla E\|_{L^p(0,T; L^q(\R^3))}\\
&\le C\|E\|_{L^\infty(0,T; W^{1,q})}\|\nabla E\|_{L^p(0,T; L^q)}
\le C\sqrt{R}\|\nabla E\|_{L^p(0,T; L^q)}.
\end{split}
\end{equation}
Combining together \eqref{5111} and \eqref{5112}, we obtain
\begin{equation*}
\begin{split}
\|\nabla E\|_{L^p(0,T; L^q(\R^3))} &\le C(p,q, \mu, \lambda)\Big(R+\sqrt{R} \|\sigma\|_{L^p(0,T; L^q)}+\sqrt{R}\|\nabla
E\| _{L^p(0,T; L^q)}\Big)\\&\quad+C(p,q, \mu,\lambda)\mu\alpha\|\nabla E\|_{L^p(0,T; L^q])},
\end{split}
\end{equation*}
and hence, by choosing $\sqrt{R}\ll \f{1}{2}$ and using the
assumption
\begin{equation}\label{AA}
C(p,q,\mu,\lambda)\mu\alpha<1,
\end{equation}
one obtains \eqref{5113}. The proof of Lemma \ref{E1} is complete.
\end{proof}

\begin{Remark}
The assumption \eqref{AA} is reasonable, because if we consider
the special case: $0<\mu\le 1$ and $\lambda=0$. Then, after the
scaling, we get a control on the constant $C(p,q,\mu,\lambda)\le
C(p,q)\mu^{-\f{6+q}{3q}}$, and hence
$C(p,q,\mu,\lambda)\mu\alpha\le \alpha\mu^{\f{2q-6}{3q}}\rightarrow
0,\quad\textrm{as}\quad \mu\rightarrow 0.$
\end{Remark}

\begin{Remark}
Notice that, in view of the above argument,  estimate \eqref{5113} is actually valid for all $t\in[0,T]$, that is, for all $t\in[0,T]$,
\begin{equation*}
\begin{split}
\|\nabla E\|_{L^p(0,t; L^q(\R^3))} &\le
C(p,q,\mu,\lambda)\left(R+\sqrt{R}\|\sigma\|_{L^p(0,t;
L^q(\R^3))}\right).
\end{split}
\end{equation*}
\end{Remark}

\subsection{Dissipation of the gradient of the density}
To make Theorem \ref{T1} valid, we need further  the uniform estimate on the dissipation of the gradient of the density.

\begin{Lemma}\label{E2}
For any $t\in (0, T)$,
\begin{equation}\label{12345}
\|\sigma\|_{L^p(0,t; L^q(\R^3))}\le C(p,q,\mu)R.
\end{equation}
\end{Lemma}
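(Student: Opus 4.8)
The plan is to obtain \eqref{12345} by bootstrapping from the dissipation estimate for $\nabla E$ already proved in Lemma \ref{E1}: the algebraic constraint of Remark \ref{QQ} lets one convert $\nabla\r$ into a quantity controlled by $\nabla E$, after which an absorption argument closes the estimate.

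First I would invoke the identity
$$\nabla\r=-\r\,\Dv E^\top-\nabla\r\,E^\top,$$
which holds by Remark \ref{QQ} and was already used just after \eqref{517} in the proof of Lemma \ref{E1}. Since $q>3$, the embedding $W^{1,q}(\R^3)\hookrightarrow L^\infty(\R^3)$ together with the a priori bound \eqref{xxxx1} gives $\|E\|_{L^\infty(Q_T)}\le C\|E\|_{L^\infty(0,T;W^{1,q})}\le C\sqrt R$, which is $\le\frac12$ for $R$ sufficiently small; combined with $\frac12\le\r\le\frac32$, the displayed identity becomes the pointwise bound $|\nabla\r|\le C|\nabla E|$, whence $|\sigma|=|\nabla\r|/\r\le C|\nabla E|$ a.e.\ on $Q_T$. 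Taking $L^p(0,t;L^q(\R^3))$ norms gives $\|\sigma\|_{L^p(0,t;L^q)}\le C\|\nabla E\|_{L^p(0,t;L^q)}$ for every $t\in(0,T)$.

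Next I would insert Lemma \ref{E1}, in the form valid for all $t\in[0,T]$ recorded in the Remark following its proof, namely $\|\nabla E\|_{L^p(0,t;L^q)}\le C(p,q,\mu)\bigl(R+\sqrt R\,\|\sigma\|_{L^p(0,t;L^q)}\bigr)$, to arrive at $\|\sigma\|_{L^p(0,t;L^q)}\le C(p,q,\mu)\bigl(R+\sqrt R\,\|\sigma\|_{L^p(0,t;L^q)}\bigr)$. By Lemma \ref{sigma}, $\sigma\in L^\infty(0,T;L^q(\R^3))$, so for $t\le T<\infty$ the norm $\|\sigma\|_{L^p(0,t;L^q)}$ is a priori finite; choosing $R$ small enough that $C(p,q,\mu)\sqrt R\le\frac12$ then allows the last term to be absorbed into the left-hand side, yielding $\|\sigma\|_{L^p(0,t;L^q)}\le 2C(p,q,\mu)R$, which is \eqref{12345}.

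There is no serious analytic difficulty here; the two points needing care are (i) justifying the pointwise absorption $|\nabla\r|\le C|\nabla E|$, which rests on the $L^\infty$-smallness of $E$ from \eqref{xxxx1} and on $q>3$, and (ii) checking that $\|\sigma\|_{L^p(0,t;L^q)}$ is finite before absorbing, which is precisely the continuity-in-time statement of Lemma \ref{sigma} together with $t<\infty$. As an alternative that avoids the constraint, one may instead isolate $\nabla P(\r)$ in the rewritten momentum equation \eqref{511}: writing $\sigma=\nabla P(\r)\big/(\r P'(\r))$ with $\r P'(\r)$ bounded away from $0$ on $[\frac12,\frac32]$, the parabolic part of \eqref{511} contributes $C(\mu,\lambda)\|\u\|_{\mathcal{W}^{p,q}(0,t)}\le C(\mu,\lambda)R$, the term $\Dv E$ is controlled through Lemma \ref{E1}, and the remaining terms $\r(\u\cdot\nabla)\u$, $\Dv((\r-1)E)$, $\Dv(\r EE^\top)$, $(1-\r)\partial_t\u$ are each $O(R^{3/2})$ or of the form $O(\sqrt R)\bigl(\|\sigma\|_{L^p(0,t;L^q)}+\|\nabla E\|_{L^p(0,t;L^q)}\bigr)$ by \eqref{21}, \eqref{xxxx1} and $\|\partial_t\u\|_{L^p(0,t;L^q)}\le R$; the same absorption step then completes the proof.
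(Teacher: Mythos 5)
Your primary argument is a genuinely different and simpler route than the paper's, and it is correct as far as it goes. The paper instead multiplies the momentum equation \eqref{e12} by $\sigma|\sigma|^{q-2}$, integrates, and reads off the dissipation from the pressure term: after Taylor-expanding $\r P'(\r)$ near $\r=1$ and using \eqref{cc1} to rewrite the elastic stress, it reaches a differential inequality \eqref{19} of the form $\frac{\mu+\lambda}{q}\frac{d}{dt}\|\sigma\|_{L^q}^q+\frac12 P'(1)\|\sigma\|_{L^q}^q\le C\|\sigma\|_{L^q}^{q-1}(\cdots)$, then integrates in time, inserts \eqref{5113}, applies Young's inequality, and closes with a continuity argument that also produces the sharper \emph{pointwise-in-time} bound $\|\sigma(t)\|_{L^q}\le\frac12\sqrt R$ in \eqref{eo}. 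You, by contrast, never touch the momentum equation: you invert the algebraic identity $\nabla\r(I+E^\top)=-\r\,\Dv E^\top$ pointwise, which gives $|\sigma|\le C|\nabla E|$ a.e.\ once $\|E\|_{L^\infty}\le\frac12$, and then close the loop against Lemma \ref{E1} by absorption. Both proofs rest on the constraint of Remark \ref{QQ} (the paper already uses it inside the proofs of Lemmas \ref{E1} and \ref{E2}), and both yield \eqref{12345}, so as a proof of the stated lemma your argument is valid and cleaner. What the paper's more laborious route buys is precisely \eqref{eo}, the bound $\max_{t\in[0,T]}\|\sigma(t)\|_{L^q}\le\frac12\sqrt R$ with a constant strictly below $1$, which is invoked again in the proof of Lemma \ref{ue}; your pointwise identity plus \eqref{xxxx1} gives only $\|\sigma(t)\|_{L^q}\le C\sqrt R$ with an unspecified $C$, so if one later needs \eqref{eo} the paper's computation cannot simply be discarded. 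One further caveat: the ``alternative'' you sketch at the end, isolating $\nabla P(\r)$ from \eqref{511}, does \emph{not} avoid the constraint as you claim — equation \eqref{511} contains the term $\Dv(\r(I+E)^\top)=\r\sigma+\Dv(\r E^\top)$, which carries a full-strength copy of $\r\sigma$ and can only be discarded by appealing to Remark \ref{QQ}. If you keep it, it cannot be absorbed; if you drop it, you have used the constraint. Your primary argument is fine, but that closing remark should be corrected.
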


\begin{proof}
Multiplying \eqref{e12} by $\sigma|\sigma|^{q-2}$ and integrating over $\R^3$, we obtain
\begin{equation}\label{17}
\begin{split}
&\f{\mu+\lambda}{q}\f{d}{dt}\|\sigma\|^q_{L^q}+\int_{\R^3} \r P'(\r)|\sigma|^qdx\\
&=\int_{\R^3}(\mu\D \u+(\mu+\lambda)\nabla\Dv\u)\cdot\sigma|\sigma|^{q-2}dx-\int_{\R^3} \r\partial_t \u\cdot\sigma|\sigma|^{q-2}dx\\
&\quad-\int_{\R^3} \r(\u\cdot\nabla)\u\cdot\sigma|\sigma|^{q-2}dx-(\mu+\lambda)\int_{\R^3}\nabla(\u\cdot\sigma)\cdot\sigma|\sigma|^{q-2}dx\\
&\quad+\int_{\R^3}
\Dv(\r(I+E)(I+E)^\top)\cdot\sigma|\sigma|^{q-2}dx.
\end{split}
\end{equation}

We estimate the right-hand side of \eqref{17} term by term,
$$\left|\int_{\R^3} (\mu\D \u+(\mu+\lambda)\nabla\Dv\u)\cdot\sigma|\sigma|^{q-2}dx\right|\le\|\u\|_{W^{2,q}}\|\sigma\|_{L^q}^{q-1};$$
$$\left|\int_{\R^3} \r\partial_t \u\cdot\sigma|\sigma|^{q-2}dx\right|\le \|\partial_t \u\|_{L^q}\|\sigma\|_{L^q}^{q-1};$$
\begin{equation*}
\begin{split}
\left|\int_{\R^3} \r\u\cdot\nabla \u\cdot\sigma|\sigma|^{q-2}dx\right|&\le \|\u\cdot\nabla \u\|_{L^q}\|\sigma\|^{q-1}_{L^q}\
\le \|\u\|_{L^q}\|\u\|_{W^{2,q}}\|\sigma\|_{L^q}^{q-1};
\end{split}
\end{equation*}
\begin{equation*}
\begin{split}
\left|\int_{\R^3} \nabla(\u\cdot\sigma)\cdot\sigma|\sigma|^{q-2}dx\right|&=\left|\int_{\R^3}\partial_j\u_k\sigma_k\sigma_j|
\sigma|^{q-2}dx+\int_{\R^3} \u_k\partial_j\partial_k(\ln \r)\partial_j\ln \r|\sigma|^{q-2}dx\right|\\
&=\left|\int_{\R^3}\partial_j\u_k\sigma_k\sigma_j|\sigma|^{q-2}dx+\f{1}{2}\int_{\R^3} \u_k\partial_k|\sigma|^2|\sigma|^{q-2}dx\right|\\
&=\left|\int_{\R^3}\partial_j\u_k\sigma_k\sigma_j|\sigma|^{q-2}dx+\int_{\R^3} \u_k\partial_k|\sigma||\sigma|^{q-1}dx\right|\\
&=\left|\int_{\R^3}\partial_j\u_k\sigma_k\sigma_j|\sigma|^{q-2}dx+\f{1}{q}\int_{\R^3} \u_k\partial_k|\sigma|^qdx\right|\\
&=\left|\int_{\R^3}\partial_j\u_k\sigma_k\sigma_j|\sigma|^{q-2}dx-\f{1}{q}\int_{\R^3} |\sigma|^q\Dv \u dx\right|\\
&\le C\|\nabla \u\|_{L^\infty}\|\sigma\|^q_{L^q}\le C\|\u\|_{W^{2,q}}\|\sigma\|_{L^q}^q,
\end{split}
\end{equation*}
and, due to \eqref{cc1}, we can rewrite
\begin{equation*}
\begin{split}
\left(\Dv(\r(I+E)(I+E)^\top)\right)_i
&=\f{\partial{(\r(e_i+E_i)(e_j+E_j))}}{\partial x_j}\\&=(e_i+E_i)\f{\partial{(\r(e_j+E_j))}}{\partial x_j}
+\r(e_j+E_j)\f{\partial{(e_i+E_i)}}{\partial x_j}\\
&=\r(e_j+E_j)\f{\partial{E_i}}{\partial x_j},
\end{split}
\end{equation*}
then one has
\begin{equation*}
\begin{split}
&\left|\int_{\R^3}\Dv(\r(I+E)(I+E)^\top)\cdot\sigma|\sigma|^{q-2}dx\right|=\left|\int_{\R^3} \r(e_j+E_j)\f{\partial{E_i}}{\partial x_j}
\sigma_i|\sigma|^{q-2}dx\right|\\
&\le \|\nabla E\|_{L^q}\|I+E\|_{L^\infty}\|\sigma\|^{q-1}_{L^q}
\le 2\|\nabla E\|_{L^q(\R^3)}\|\sigma\|^{q-1}_{L^q}.
\end{split}
\end{equation*}

On the other hand, we have
\begin{equation*}
\begin{split}
\r P'(\r)&=P'(1)+(\r-1)\int_0^1 \left(P'(\eta (\r-1)+1) +(\eta (\r-1)+1)P''(\eta (\r-1)+1)\right)d\eta,
\end{split}
\end{equation*}
for $\r$ close to 1 and consequently
\begin{equation*}
\begin{split}
\|\r P'(\r)-P'(1)\|_{L^\infty}
&\le\|\r-1\|_{L^\infty}\sup\left\{|f(x)|:\f{1}{2}\le x\le \f{3}{2}\right\}\\
&\le C\sqrt{R}\sup\left\{|f(x)|:\f{1}{2}\le x\le \f{3}{2}\right\}\le C\sqrt{R},
\end{split}
\end{equation*}
where $f(x)=P'(x)+xP''(x)$.
Thus, from \eqref{17}, we obtain
\begin{equation*}
\begin{split}
&\f{\mu+\lambda}{q}\f{d}{dt}\|\sigma\|^q_{L^q}+P'(1)\|\sigma\|^q_{L^q}\\
&\le\|\sigma\|^{q-1}_{L^q}\Big(\|\D \u\|_{L^q}+\|\partial_t \u\|_{L^q} +\|\u\|_{W^{2,q}}\|\u\|_{L^q}\\
&\qquad\qquad\quad +C\|\u\|_{W^{2,q}}\|\sigma\|_{L^q}+C\|\nabla E\|_{L^q(\R^3)}+\sqrt{R}\|\sigma\|_{L^q}\Big)\\
&\le C\|\sigma\|_{L^q}^{q-1}\Big(\|\D \u\|_{L^q}+\|\partial_t \u\|_{L^q}+\|\u\|_{W^{2,q}}\|\u\|_{L^q}\\
&\qquad\qquad\qquad +\|\u\|_{W^{2,q}}\|\sigma\|_{L^q}+\|\nabla E\|_{L^q(\R^3)}+\sqrt{R}\|\sigma\|_{L^q}\Big),
\end{split}
\end{equation*}
and hence, by assuming that $R\ll 1$, one obtains
\begin{equation}\label{19}
\begin{split}
&\f{\mu+\lambda}{q}\f{d}{dt}\|\sigma\|^q_{L^q}+\f{1}{2}P'(1)\|\sigma\|^q_{L^q}\\
&\le C\|\sigma\|_{L^q}^{q-1}\big(\|\D \u\|_{L^q}+\|\partial_t \u\|_{L^q}+\|\u\|_{W^{2,q}}\|\u\|_{L^q}
+\|\u\|_{W^{2,q}}\|\sigma\|_{L^q}+\|\nabla E\|_{L^q}\big).
\end{split}
\end{equation}
Multiplying \eqref{19} by $\|\sigma\|^{p-q}_{L^q}$, we obtain
\begin{equation*}
\begin{split}
&\f{\mu+\lambda}{p}\f{d}{dt}\|\sigma\|^p_{L^q}+\f{1}{2}P'(1)\|\sigma\|^p_{L^q}\\
&\le C\|\sigma\|^{p-1}_{L^q}\big(\|\D \u\|_{L^q}+\|\partial_t \u\|_{L^q}+\|\u\|_{W^{2,q}}\|\u\|_{L^q}
+\|\u\|_{W^{2,q}}\|\sigma\|_{L^q}+\|\nabla E\|_{L^q}\big).
\end{split}
\end{equation*}
Integrating the above inequality over the interval $(0, t)$, one obtains, by using \eqref{5113},
\begin{equation*}
\begin{split}
&\f{\mu+\lambda}{p}\|\sigma(t)\|^p_{L^q}+\f{1}{2}P'(1)\int_0^t\|\sigma\|^p_{L^q}ds\\
&\le\f{\mu+\lambda}{p}\|\sigma(0)\|^p_{L^q}
+C\left(\int_0^t\|\sigma\|_{L^q}^pds\right)^{\f{p-1}{p}}\left(\left(\int_0^t\|\partial_t\u\|^p_{L^q}ds\right)^{\f{1}{p}}\right.\\
&\qquad +\left(\|\u\|_{L^\infty(0,t;L^q(\R^3))}+\|\sigma\|_{L^\infty(0,t;L^q(\R^3))}+1\right)
\left(\int_0^t\|\u\|_{W^{2,q}}^pds\right)^{\f{1}{p}}\\
&\qquad +\|\nabla E\|_{L^p(0,t;L^q(\R^3))}\Big)\\
&\le\f{\mu+\lambda}{p}\|\sigma(0)\|^p_{L^q}+C(p,q,\mu)\sqrt{R}\|\sigma\|_{L^p(0,T; L^q(\R^3))}^p\\
&\qquad+C(p,q,\mu)R\left(\int_0^t\|\sigma\|_{L^q}^pds\right)^{\f{p-1}{p}}
\left(1+\|\sigma\|_{L^\infty(0,t;
L^q(\R^3))}+\|\u\|_{L^\infty(0,t; L^q(\R^3))}\right).
\end{split}
\end{equation*}
and hence, by letting $R$ be so small such that $C(p,q,\mu)\sqrt{R}<\f{1}{4}$, one obtains
\begin{equation}\label{20}
\begin{split}
&\f{\mu+\lambda}{p}\|\sigma(t)\|^p_{L^q}+\f{1}{4}P'(1)\int_0^t\|\sigma\|^p_{L^q}ds\\
&\le\f{\mu+\lambda}{p}\|\sigma(0)\|^p_{L^q}
+C(p,q,\mu)R\left(\int_0^t\|\sigma\|_{L^q}^pds\right)^{\f{p-1}{p}}\left(1+\|\sigma\|_{L^\infty(0,t;
L^q)} +\|\u\|_{L^\infty(0,t; L^q)}\right).
\end{split}
\end{equation}

Plugging \eqref{21} into \eqref{20}, we obtain
\begin{equation*}
\begin{split}
&\f{\mu+\lambda}{p}\|\sigma(t)\|^p_{L^q}+\f{1}{4}P'(1)\int_0^t\|\sigma\|^p_{L^q}ds\\&\le\f{\mu+\lambda}{p}\|\sigma(0)\|^p_{L^q}
+C(p,q,\mu)R\left(\int_0^t\|\sigma\|_{L^q}^pds\right)^{\f{p-1}{p}}(1+\|\sigma\|
_{L^\infty(0,t; L^q(\R^3))}).
\end{split}
\end{equation*}
Then, Young's inequality yields
\begin{equation}\label{22}
\begin{split}
&\f{\mu+\lambda}{p}\|\sigma(t)\|^p_{L^q}+\f{1}{8}P'(1)\int_0^t\|\sigma\|^p_{L^q}ds\\
&\le \f{\mu+\lambda}{p}\dl^{\f{3}{2}p}+C(p,q,\mu)R^p(1+\|\sigma\|_{L^\infty(0,t; L^q)})^p,
\end{split}
\end{equation}
for all $0\le t<T$.

Now, we let $R$ be so small that
$$C(p,q,\mu)^{\f{1}{p}}\sqrt{R}\left(1+\sqrt{R}\right)<\f{1}{2}.$$
Due to the fact that $\|\sigma(0)\|_{L^q(\R^3)}\le \dl^{\f{3}{2}}$, we can assume that $\|\sigma(t)\|_{L^q}<\f{1}{2}\sqrt{R}$
in some maximal
interval $(0, t_{\textrm{max}})\subset(0,T).$ If $t_{\textrm{max}}<T$, then, $\|\sigma(t_{\textrm{max}})\|_{L^q}=\f{1}{2}\sqrt{R}$ and by \eqref{22},
$$\f{1}{2}\sqrt{R}=\|\sigma(t_{\textrm{max}})\|_{L^q}\le C(p,q,\mu)^{\f{1}{p}}R(1+\sqrt{R})<\f{1}{2}\sqrt{R},$$
which is a contradiction. Hence, $t_{\textrm{max}}=T$ and
\begin{equation}\label{eo}
\|\sigma\|_{L^q}\le \f{1}{2}\sqrt{R},\quad\textrm{for all}\quad t\in[0,T].
\end{equation}
Thus, by \eqref{22}, one obtains \eqref{12345}. The proof of Lemma \ref{E2} is complete.
\end{proof}

We remark that,  from \eqref{5113} and \eqref{12345}, one has
\begin{equation}\label{2222}
\|\nabla E\|_{L^p(0,T; L^{q}(\R^3))}\le C(p,q,\mu)R.
\end{equation}

\section{Refined Uniform Estimates} \label{global}

In this section, we prove the second part and thus complete the proof of Theorem \ref{T1}. Define
\begin{equation*}
\begin{split}
T_{\textrm{max}}:=\sup\Big\{T>0: \;\; &\exists \; \u\in \mathcal{W}(0,T)\;  \textrm{with } \u={\mathcal H}(\u),\:
\textrm{such that,}\;\;
\|\u\|_{\mathcal{W}(0,T)}\le R, \\ 
&\|\S(\u)-1\|_{L^\infty(0,T;W^{1,q})}\le\sqrt{R}, \; \|\sigma\|_{L^\infty(0,T;L^q)}\le\sqrt{R}, \text{ and }\\
&\|\T(\u)\|_{L^\infty(0,T;W^{1,q})}\le\sqrt{R}
\Big\},
\end{split}
\end{equation*}
where $R$ was constructed in the previous section.

\subsection{Uniform estimates in time}

We now establish some estimates which are uniform in time $T$.  First we prove  the following energy estimates:

\begin{Lemma}\label{ul}
Under the same assumptions as Theorem \ref{T20}, we have
\begin{equation}\label{ul1}
\|\nabla\u\|_{L^2(0,T; L^2(\R^3))}\le CR^2,
\end{equation}
\begin{equation}\label{ul123}
\|\u\|_{L^\infty(0,T; L^2(\R^3))}\le CR^2,
\end{equation}
\begin{equation}\label{w7}
\|E\|_{L^\infty(0,T; L^2(\R^3))}\le CR^2,
\end{equation}
\begin{equation}\label{w8}
\|\r-1\|_{L^\infty(0,T; L^2(\R^3))}\le CR^2,
\end{equation}
where $C$ is a constant independent of $T\in(0, T_{\rm{max}})$.
\end{Lemma}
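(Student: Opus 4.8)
The plan is to establish the basic viscoelastic energy identity for \eqref{e1} and then to read off all four bounds from it. First I would test the momentum equation \eqref{e12} against $\u$ and integrate over $\R^3$: with the continuity equation \eqref{e11} the time-derivative and convective terms collapse into $\f{d}{dt}\int_{\R^3}\f12\r|\u|^2\,dx$, the viscous terms produce $\int_{\R^3}(\mu|\nabla\u|^2+(\mu+\lambda)|\Dv\u|^2)\,dx$, the pressure term becomes $-\int_{\R^3}P(\r)\Dv\u\,dx=\f{d}{dt}\int_{\R^3}\Pi(\r)\,dx$ with the convex, nonnegative potential $\Pi(\r)=\f{1}{\gamma-1}(\r^\gamma-\gamma\r+\gamma-1)$ (so that $\r\Pi''(\r)=P'(\r)$ and $\Pi(1)=\Pi'(1)=0$), and the elastic source becomes $-\int_{\R^3}\r(I+E)(I+E)^\top:\nabla\u\,dx$ after one integration by parts. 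Next I would contract \eqref{e13} with $\r E$ and integrate; using \eqref{e11} again, the left-hand side equals $\f{d}{dt}\int_{\R^3}\f12\r|E|^2\,dx$ and the right-hand side equals $\int_{\R^3}\r\,E(I+E)^\top:\nabla\u\,dx$.

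Adding these two identities and using $(I+E)(I+E)^\top-E(I+E)^\top=(I+E)^\top$, the nonlinear source terms cancel down to the single remainder $-\int_{\R^3}\r(I+E)^\top:\nabla\u\,dx$, which after one further integration by parts is $\int_{\R^3}\u\cdot\Dv(\r(I+E)^\top)\,dx$ and hence vanishes by the constraint \eqref{cc1} (this is where the compatibility hypothesis \eqref{cc} on the initial data is used, cf.\ Remark \ref{QQ}). Writing $\E(t):=\int_{\R^3}\big(\f12\r|\u|^2+\Pi(\r)+\f12\r|E|^2\big)\,dx$, I therefore obtain the clean dissipation identity
$$\f{d}{dt}\E(t)+\int_{\R^3}\big(\mu|\nabla\u|^2+(\mu+\lambda)|\Dv\u|^2\big)\,dx=0,$$
so that $\E(t)\le\E(0)\le\dl^4$ for all $t\in(0,T)$ by \eqref{hh23}, with a bound independent of $T\in(0,T_{\textrm{max}})$.

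The four estimates then follow. Since $\mu>0$ and $2\mu+3\lambda>0$ force $2\mu+\lambda>0$, Plancherel's theorem gives $\int_{\R^3}(\mu|\nabla\u|^2+(\mu+\lambda)|\Dv\u|^2)\,dx\ge c_1\|\nabla\u\|_{L^2}^2$ with $c_1:=\min\{\mu,2\mu+\lambda\}>0$; integrating the identity in time, $\|\nabla\u\|_{L^2(0,T;L^2)}^2\le c_1^{-1}\E(0)\le c_1^{-1}\dl^4$, hence $\|\nabla\u\|_{L^2(0,T;L^2)}\le c_1^{-1/2}\dl^2\le CR^2$ because $\dl\le\dl_0=R$, which is \eqref{ul1}. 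For the remaining bounds I would recall from Section \ref{estimates} that $\f12\le\r\le\f32$ on $(0,T_{\textrm{max}})$; on this range $\Pi$ is uniformly convex, so Taylor's formula at $\r=1$ gives $\Pi(\r)\ge c_0|\r-1|^2$ for a fixed $c_0>0$, while $\f12\r|\u|^2,\f12\r|E|^2\ge0$ and $\r\ge\f12$. Consequently $\f14\|\u(t)\|_{L^2}^2\le\E(t)$, $\f14\|E(t)\|_{L^2}^2\le\E(t)$ and $c_0\|\r(t)-1\|_{L^2}^2\le\E(t)$, each $\le\dl^4$, which after taking square roots (again using $\dl^2\le R^2$) gives \eqref{ul123}, \eqref{w7} and \eqref{w8}.

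The step I expect to be the main obstacle is the derivation of the energy identity itself: the tensor contractions have to be organized exactly so that the elastic source of \eqref{e12} and the right-hand side of \eqref{e13} cancel down to $-\int\r(I+E)^\top:\nabla\u$, after which \eqref{cc1} is indispensable to remove it; in particular no term forcing a Gronwall factor $e^{Ct}$ should survive, which is precisely what makes the bounds uniform in $T$. All the manipulations above (integrations by parts, forming $\partial_t(\f12\r|\cdot|^2)+\Dv(\f12\r|\cdot|^2\u)$ from \eqref{e11}) are legitimate at the regularity of the solution produced by Theorem \ref{T20} (here $\u\in\mathcal{W}(0,T)$ with $q>3$, so $\u,\nabla\u\in L^\infty(Q_T)$, and $(\r-1,E)\in W^{1,2}\cap W^{1,q}$), but, as in the proof of Lemma \ref{sigma}, for full rigor one first performs them on the mollified/approximate solutions constructed in Section \ref{local} and then passes to the limit.
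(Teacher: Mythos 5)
Your proposal is correct and follows essentially the same route as the paper: test \eqref{e12} against $\u$, derive the elastic energy balance from \eqref{e13} and \eqref{e11}, add, and use the constraint \eqref{cc1} (propagated from \eqref{cc}) to remove the surviving term, yielding the monotone energy $\E(t)\le\E(0)\le\dl^4$, from which the four $CR^2$ bounds follow as you describe. The only cosmetic difference is that the paper tracks $\tfrac12\r|\F|^2$ and then uses the conservation of $\int\r$ and of $\int\r\,\tr E$ (the latter via \eqref{cc1}) to reduce to $\tfrac12\r|E|^2$, whereas you contract \eqref{e13} with $\r E$ directly and let the constraint kill the single remainder $-\int\r(I+E)^\top:\nabla\u\,dx$; this is the same identity rearranged. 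Two small remarks: your Fourier/Plancherel argument for the coercivity of the dissipation is sound but is actually unnecessary here, since $2\mu+3\lambda>0$ already forces $\mu+\lambda>\mu/3>0$, so $(\mu+\lambda)|\Dv\u|^2\ge0$ and the bound $\mu\|\nabla\u\|_{L^2}^2$ is immediate; and you rightly note (as the paper implicitly does) that the hypotheses actually used, namely \eqref{cc} and \eqref{hh23}, are those of Theorem \ref{T1}(II) rather than merely Theorem \ref{T20}.
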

\begin{proof}
First we recall that $$\u\in W^{1,2}(0,T; L^2(\R^3))\cap L^2(0,T; W^{2,2}(\R^3))$$
and
$$\r, E\in W^{1,2}(0,T;L^2(\R^3))
\cap L^2(0,T; W^{1,2}(\O)).$$
Multiplying  equation \eqref{e12} by $\u$, and integrating over $\R^3$, we obtain, using the conservation of mass \eqref{e11},
\begin{equation*}
\begin{split}
&\f{d}{dt}\int_{\R^3}\left(\f{1}{2}\r|\u|^2+\f{1}{\gamma-1}(\r^\gamma+\gamma-1)\right)dx+\int_{\R^3}(\mu|\nabla\u|^2+(\mu+\lambda)|\Dv\u|^2)dx\\
&=-\int_{\R^3}\r \F\F^\top:\nabla\u dx.
\end{split}
\end{equation*}
Here, the notation $A:B$ means the dot product between two
matrices. Thus, we have
\begin{equation*}
\begin{split}
&\int_{\R^3}\left(\f{1}{2}\r|\u|^2+\f{1}{\gamma-1}(\r^\gamma+\gamma-1)\right)dx+\int_0^t\int_{\R^3}(\mu|\nabla\u|^2+(\mu+\lambda)|\Dv\u|^2)dxds\\
&=\int_{\R^3}\left(\f{1}{2}\r_0|\u_0|^2+\f{1}{\gamma-1}(\r_0^\gamma+\gamma-1)\right)dx-\int_0^t\int_{\R^3}
\r \F\F^\top:\nabla\u dxds.
\end{split}
\end{equation*}
From the conservation of mass \eqref{e11}, one has
\begin{equation}\label{w1}
\begin{split}
&\int_{\R^3}\left(\f{1}{2}\r|\u|^2+\f{1}{\gamma-1}(\r^\gamma-\gamma
\r+\gamma-1)\right)dx+\int_0^t\int_{\R^3}(\mu|\nabla\u|^2+(\mu+\lambda)|\Dv\u|^2)dxds\\
&=\int_{\R^3}\left(\f{1}{2}\r_0|\u_0|^2+\f{1}{\gamma-1}(\r_0^\gamma-\gamma
\r_0+\gamma-1)\right)dx-\int_0^t\int_{\R^3}\r \F\F^\top:\nabla\u dxds.
\end{split}
\end{equation}
 On the other hand, due to equations \eqref{e13} and \eqref{e11}, we have
\begin{equation}\label{w2}
\begin{split}
\f{\partial}{\partial t}\left(\r |\F|^2\right)&=\f{\partial \r}{\partial t}|\F|^2+2\r \F:\f{\partial \F}{\partial t}
=\f{\partial \r}{\partial t}|\F|^2+2\r \F:(\nabla\u \,\F-\u\cdot\nabla \F)\\
&=\f{\partial \r}{\partial t}|\F|^2+2\r \F:(\nabla\u \,\F)-\r\u\cdot\nabla |\F|^2\\
&=\f{\partial \r}{\partial t}|\F|^2+2\r \F:(\nabla\u \,\F)+\Dv(\r\u)|\F|^2-\Dv(\r\u|\F|^2)\\
&=2\r \F:(\nabla\u \,\F)-\Dv(\r\u|\F|^2).
\end{split}
\end{equation}
Integrating \eqref{w2} over $\R^3$, we arrive at
\begin{equation}\label{w4}
\begin{split}
\f{1}{2}\f{d}{dt}\int_{\R^3}\r|\F|^2dx=\int_{\R^3}\r \F:(\nabla\u \,\F)dx.
\end{split}
\end{equation}
Since
$$\int_0^t\int_{\R^3}\r \F:(\nabla\u \,\F)dxds=\int_0^t\int_{\R^3}\r \F\F^\top:\nabla\u dxds,$$
we finally obtain, by summing \eqref{w1} and \eqref{w2},
\begin{equation}\label{w3}
\begin{split}
&\int_{\R^3}\left(\f{1}{2}\r|\u|^2+\f{1}{2}\r|\F|^2+\f{1}{\gamma-1}(\r^\gamma-\gamma
\r+\gamma-1)\right)dx\\&\quad+\int_0^t\int_{\R^3}(\mu|\nabla\u|^2+(\mu+\lambda)|\Dv\u|^2)dxds\\
&=\int_{\R^3}\left(\f{1}{2}\r_0|\u_0|^2+\f{1}{2}\r_0|\F_0|^2+\f{1}{\gamma-1}(\r_0^\gamma-\gamma
\r_0+\gamma-1)\right)dx.
\end{split}
\end{equation}

From Remark \ref{QQ},
$\r(I+E^\top):\nabla\u=0.$
Hence, from \eqref{e13} and \eqref{e11}, we have
\begin{equation}\label{w6}
\partial_t(\r \, \tr E)=0.
\end{equation}
Therefore, from \eqref{w3}, \eqref{w6} and the conservation of mass
\eqref{e11}, we finally arrive at
\begin{equation}\label{w5}
\begin{split}
&\int_{\R^3}\left(\f{1}{2}\r|\u|^2+\f{1}{2}\r|E|^2+\f{1}{\gamma-1}(\r^\gamma-\gamma
\r+\gamma-1)\right)dx\\&\quad+\int_0^t\int_{\R^3}(\mu|\nabla\u|^2+(\mu+\lambda)|\Dv\u|^2)dxds\\
&=\int_{\R^3}\left(\f{1}{2}\r_0|\u_0|^2+\f{1}{2}\r_0|E_0|^2+\f{1}{\gamma-1}(\r_0^\gamma-\gamma
\r_0+\gamma-1)\right)dx\le R^4.
\end{split}
\end{equation}
Since $\mu>0$ is a constant and $\r\in[\f{1}{2}, \f{3}{2}]$,   then inequalities \eqref{ul1}-\eqref{w7} follow from \eqref{w5}, and inequality \eqref{w8} follows from \eqref{w5} and the
following straightforward inequalities: for some $\eta>0$, we have
\begin{equation*}
x^\gamma-1-\gamma(x-1)\ge
\begin{cases}
\eta|x-1|^2,&\textrm{if}\quad \gamma\ge 2,\\
\eta|x-1|^2,&\textrm{if}\quad |x|<2\quad\textrm{and}\quad 1<\gamma<2.
\end{cases}
\end{equation*}
The proof of Lemma \ref{ul} is complete.
\end{proof}

Based on the uniform estimates from Section \ref{estimates}, we have

\begin{Lemma}\label{ue}
Under the same assumptions as Theorem \ref{T1}, 
\begin{equation}\label{ue1}
\|\S(\u)-1\|_{L^\infty(0,T; W^{1,q})}<\sqrt{R}, \quad \|\sigma\|_{L^\infty(0,T; L^{q})}<\sqrt{R},
\end{equation}
for any $T\in[0, T_{\rm{max}}]$.
\end{Lemma}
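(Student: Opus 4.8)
The plan is to upgrade the non‑strict bounds appearing in the definition of $T_{\max}$ to strict ones by re‑running the analysis of Section \ref{estimates}. Fix $T\in[0,T_{\max}]$. By the definition of $T_{\max}$ the bounds $\|\S(\u)-1\|_{L^\infty(0,T;W^{1,q})}\le\sqrt R$, $\|\sigma\|_{L^\infty(0,T;L^q)}\le\sqrt R$ and $\|\T(\u)\|_{L^\infty(0,T;W^{1,q})}\le\sqrt R$ all hold on $[0,T]$; that is, \eqref{xxxx1} is in force on $[0,T]$, so every estimate of Section \ref{estimates} applies there. In particular $\f{1}{2}\le\r\le\f{3}{2}$ on $Q_T$, and Lemma \ref{E2}---more precisely the pointwise‑in‑time bound \eqref{eo} established in its proof---gives
\[
\|\sigma(t)\|_{L^q(\R^3)}\le\f{1}{2}\sqrt R\qquad\text{for all }t\in[0,T].
\]
Taking the supremum over $t$ yields the second inequality of \eqref{ue1}, with a factor $\f{1}{2}$ to spare.

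For the first inequality I would set $\ov r:=\S(\u)-1$ and split $\|\ov r\|_{W^{1,q}}=\|\ov r\|_{L^q}+\|\nabla\ov r\|_{L^q}$. Since $\sigma=\nabla\ln\r$, one has $\nabla\ov r=\nabla\r=\r\,\sigma$, so
\[
\|\nabla\ov r(t)\|_{L^q}\le\|\r\|_{L^\infty(\R^3)}\,\|\sigma(t)\|_{L^q}\le\bigl(1+C\sqrt R\bigr)\f{1}{2}\sqrt R\le\f{3}{4}\sqrt R
\]
once $R=\dl_0$ is small. For the zeroth‑order part I would return to the continuity equation in the form $\partial_t\ov r+\u\cdot\nabla\ov r+\ov r\,\Dv\u+\Dv\u=0$: multiplying by $|\ov r|^{q-2}\ov r$, integrating by parts and dividing by $\|\ov r\|_{L^q}^{q-1}$ gives, using $q>3$,
\[
\f{d}{dt}\|\ov r\|_{L^q}\le C\|\u\|_{W^{2,q}}\bigl(\|\ov r\|_{L^q}+1\bigr).
\]
Then Gronwall's inequality together with $\|\u\|_{L^p(0,T;W^{2,q})}\le R$, $T\le T_0$ and $\|\ov r(0)\|_{L^q}\le\dl^2$ yields
\[
\|\ov r(t)\|_{L^q}\le\bigl(\dl^2+C\,T_0^{1-1/p}R\bigr)\exp\bigl(C\,T_0^{1-1/p}R\bigr)\le\f{1}{8}\sqrt R,
\]
provided $\dl_0$ is taken small enough in terms of $T_0,\mu,\lambda$ (a smallness already imposed in Section \ref{estimates}). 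Adding the two contributions gives $\|\ov r(t)\|_{W^{1,q}}\le\f{7}{8}\sqrt R<\sqrt R$ on $[0,T]$, which is the first inequality of \eqref{ue1}.

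No step above is a genuine obstacle; the only delicate point is that the bound on $\|\nabla\ov r\|_{L^q}$ inherited from $\sigma$ is merely $\f{3}{4}\sqrt R$, not arbitrarily small, so one really does need both $\|\r\|_{L^\infty}$ close to $1$ and $\|\ov r\|_{L^q}$ genuinely small; the latter is where the dependence of $\dl_0$ on $T_0$ enters, through $\int_0^T\|\Dv\u\|_{L^q}\,ds\le T_0^{1-1/p}\|\u\|_{L^p(0,T;W^{2,q})}$. I would then observe that, together with the continuity in time of $\|\S(\u)-1\|_{W^{1,q}}(t)$, $\|\sigma\|_{L^q}(t)$ and $\|\T(\u)\|_{W^{1,q}}(t)$ from Lemmas \ref{r}, \ref{g} and \ref{sigma} (and the companion improvement for $\|\T(\u)\|_{W^{1,q}}$), this lemma is precisely what is needed to run the standard continuation argument showing that $T_{\max}$ reaches the local existence time.
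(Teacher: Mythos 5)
Your proof is correct, and both parts of your argument reach the stated conclusion. The first part ($\|\sigma\|_{L^\infty L^q}<\sqrt R$) is identical to the paper's: both you and the authors simply invoke the pointwise bound \eqref{eo} from the proof of Lemma \ref{E2}. For the second part, however, you take a genuinely different route. You split $\|\S(\u)-1\|_{W^{1,q}}$ into the gradient piece (handled by $\nabla\ov r=\r\sigma$, giving $\le\f{3}{4}\sqrt R$) and the $L^q$ piece, and you control the latter by a direct Gronwall estimate on the transport form of the continuity equation, using $\int_0^T\|\u\|_{W^{2,q}}\,ds\le T_0^{1-1/p}R$. This is shorter and more elementary, but the resulting constant depends on $T_0$ through that factor $T_0^{1-1/p}$.

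The paper instead controls $\|\S(\u)(t)-1\|_{L^q}$ by writing $\|\S(\u)(t)-1\|_{L^q}^\alpha$ as its initial value plus a time integral, applying H\"older in time with the carefully chosen exponent $\alpha=\f{(5q-6)(p-1)}{3q-6}+1$, bounding $\|\partial_t\r\|_{L^p(0,T;L^q)}\le CR$ via \eqref{w10}, and bounding $\|\r-1\|_{L^{(5q-6)p/(3q-6)}(0,T;L^q)}$ by the Gagliardo--Nirenberg interpolation between $\|\r-1\|_{L^\infty(0,T;L^2)}\le CR^2$ (Lemma \ref{ul}) and $\|\sigma\|_{L^p(0,T;L^q)}\le CR$ (Lemma \ref{E2}). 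The payoff of this more elaborate scheme is that the constant $C$ is genuinely independent of $T$; no factor of $T_0^{1-1/p}$ appears. Since the authors are explicitly hoping these estimates can eventually be pushed toward global existence (Remark \ref{QQ9}), preserving $T$-uniformity is not cosmetic. For the stated lemma (which only needs $T\le T_{\max}\le T_0$, with $\dl_0$ already tuned to $T_0$), your argument is fully adequate; it simply trades the time-interpolation machinery for a Gronwall bound that re-introduces the dependence on $T_0$ that the authors took pains to eliminate.
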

\begin{proof}
According to \eqref{eo}, it is obvious to see that
$$\max_{t\in[0,T]}\|\sigma\|_{L^{q}}(t)<\sqrt{R}.$$
Hence, we are only left to show
$$\max_{t\in[0,T]}\|\S(\u)-1\|_{W^{1,q}}(t)<\sqrt{R}.$$
Indeed, for any $t\in(0,T)$, we have, by using \eqref{e11} and \eqref{12345},
\begin{equation}\label{w9}
\begin{split}
&\|\S(\u)(t)-1\|_{L^q}^{\alpha}\\
&=\|\r_0-1\|_{L^q}^{\alpha}+\int_0^t\f{d}{ds}\|\S(\u)(s)-1\|_{L^q}^{\alpha} ds\\
&=\|\r_0-1\|^{\alpha}_{L^q}+\f{\alpha}{q}\int_0^t \Big(\|\S(\u)(s)-1\|_{L^q}^{\alpha-q}\\
&\qquad\qquad\qquad\qquad\qquad \quad \times \int_{\R^3} |\S(\u)(s)-1|^{q-2}(\S(\u)(s)-1)\partial_s\S(\u)(s)dx\Big)ds\\
&\le\|\r_0-1\|^{\alpha}_{L^q}+\f{\alpha}{q}\int_0^t\|\S(\u)(s)-1\|_{L^q}^{\alpha-1}\|\partial_s \S(\u)\|_{L^q}ds\\
&\le \dl^{2\alpha}+\f{\alpha}{q}\left(\int_0^t\|\S(\u)(s)-1\|_{L^q}^{\f{(5q-6)p}{3q-6}}ds\right)
^{\f{p-1}{p}}\left(\int_0^t\|\partial_s \S(\u)\|_{L^q}^pds
\right)^{\f{1}{p}},
\end{split}
\end{equation}
where $$\alpha=\f{(5q-6)(p-1)}{3q-6}+1.$$

From \eqref{e11}, we obtain
\begin{equation}\label{w10}
\begin{split}
\|\partial_t \r\|_{L^p(0,T; L^q(\R^3))}&=\|\nabla\r\cdot\u\|_{L^p(0,T; L^q(\R^3))}+\|\r\Dv\u\|_{L^p(0,T; L^q(\R^3))}\\
&\le 2\|\sigma\|_{L^\infty(0,T; L^q(\R^3))}\|\u\|_{L^p(0,T;
L^\infty(\R^3))}+2\|\Dv\u\|_{L^p(0,T; L^q(\R^3))}\\&\le CR^2+R\le
CR .
\end{split}
\end{equation}

On the other hand, from the Gagliardo-Nirenberg inequality, we have
$$\|\r-1\|_{L^q(\R^3)}\le C\|\r-1\|_{L^2(\R^3)}^\theta\|\nabla(\r-1)\|_{L^q(\R^3)}^{1-\theta}\le C\|\r-1\|_{L^2(\R^3)}^\theta
\|\sigma\|_{L^q(\R^3)}^{1-\theta},$$
with $\theta=\f{2q}{5q-6}$. Thus, by H\"{o}lder's inequality, \eqref{w8}, and \eqref{12345}, one has
\begin{equation*}
\begin{split}
\|\r-1\|_{L^{\f{(5q-6)p}{3q-6}}(0,T;L^q(\R^3))}\le C\|\r-1\|_{L^\infty(0,T; L^2(\R^3))}^\theta
\|\sigma\|_{L^p(0,T; L^q(\R^3))}^{1-\theta}\le CR,
\end{split}
\end{equation*}
which, together \eqref{w9} and \eqref{w10}, yields
$$\|\S(\u)(t)-1\|_{L^q}\le CR.$$
Hence, according to \eqref{12345}, we obtain, by letting $R$ be sufficiently small,
\begin{equation}\label{zz1}
\max_{t\in[0,T]}\max\left\{\|{\mathcal S}(v)(t)-1\|_{W^{1,q}(\R^3)}, \|\sigma(t)\|_{L^q(\R^3)}\right\}<\sqrt{R}.
\end{equation}
The proof of Lemma \ref{ue} is complete.
\end{proof}

\begin{Lemma}\label{ue2}
For each $1\le l\le 3$, $\f{\partial E}{\partial x_l}$ satisfies
\begin{equation}\label{ue3}
\begin{split}
\partial_t\f{\partial E}{\partial x_l}+\u\cdot\nabla\f{\partial E}{\partial x_l}=-\f{\partial \u}{\partial x_l}\cdot\nabla E+
\nabla\left(\f{\partial \u}{\partial x_l}\right)E+\nabla \u\f{\partial E}{\partial x_l}+\nabla\f{\partial \u}{\partial x_l}
\end{split}
\end{equation}
in the sense of distributions, 
that is, for all $\psi\in C^\infty_0(Q_T)$, we have
\begin{equation*}
\begin{split}
&\int_0^T\int_{\R^3}\f{\partial E}{\partial x_l}\partial_t\psi dxdt+\int_0^T\int_{\R^3}\Dv(\u\psi)\f{\partial E}{\partial x_l}\\
&=-\int_0^T\int_{\R^3}\left(-\f{\partial \u}{\partial x_l}\cdot\nabla E
+ \nabla\left(\f{\partial \u}{\partial x_l}\right)E
+\nabla \u\f{\partial E}{\partial x_l}+\nabla\f{\partial \u}{\partial x_l}\right)\psi dxdt,
\end{split}
\end{equation*}
for any $T\in(0, T_{\rm{max}})$.
\end{Lemma}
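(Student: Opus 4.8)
The plan is to obtain \eqref{ue3} by differentiating the transport equation \eqref{e13} with respect to $x_l$ and to justify this purely at the level of distributions, using only the regularity $\u\in\mathcal{W}(0,T)$ and $E\in W^{1,p}(0,T;L^q(\R^3))\cap L^\infty(0,T;W^{1,q}(\R^3))$ established in Section~\ref{local}; the point is that $E$ is not regular enough for a classical computation.

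First I would record the mapping properties forced by $q>3$: since $W^{1,q}(\R^3)\hookrightarrow L^\infty(\R^3)$ and $W^{2,q}(\R^3)\hookrightarrow C^1(\R^3)$, every term of \eqref{e13} lies in $L^p(0,T;L^q(\R^3))$ — $\partial_t E$ by hypothesis, while $\u\cdot\nabla E$ and $\nabla\u\,E$ are products of a factor in $L^p(0,T;L^\infty(\R^3))$ with one in $L^\infty(0,T;L^q(\R^3))$, and $\nabla\u\in L^p(0,T;L^q(\R^3))$ directly. Hence \eqref{e13} is an identity in $L^p(0,T;L^q(\R^3))$, and in particular it may be paired with any test function in $C_0^\infty(Q_T)$.

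Then, fixing $\psi\in C_0^\infty(Q_T)$ and $l\in\{1,2,3\}$, I would pair \eqref{e13} with $\f{\partial\psi}{\partial x_l}\in C_0^\infty(Q_T)$ and push the derivative $\partial_{x_l}$ off $\psi$ onto the remaining factors, term by term. For the time term, one integration by parts in $t$ (using $E\in W^{1,p}(0,T;L^q)$) followed by one in $x_l$ (using $E\in L^p(0,T;W^{1,q})$) turns $\int_{Q_T}\partial_t E\,\f{\partial\psi}{\partial x_l}$ into $\int_{Q_T}\f{\partial E}{\partial x_l}\,\partial_t\psi$. For the two terms on the right of \eqref{e13} one uses the Leibniz rule in $\mathcal{D}'(Q_T)$, namely $\partial_{x_l}(\nabla\u\,E)=\nabla\big(\f{\partial\u}{\partial x_l}\big)E+\nabla\u\,\f{\partial E}{\partial x_l}$ and $\partial_{x_l}\nabla\u=\nabla\f{\partial\u}{\partial x_l}$ (legitimate since $\nabla\u\in L^p(0,T;W^{1,q})$ and $E\in L^\infty(0,T;W^{1,q})$), and then integrates by parts in $x_l$. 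For the convection term one writes $u_k\partial_k E\in L^p(0,T;L^q)$, applies $\partial_{x_l}(u_k\partial_k E)=\f{\partial u_k}{\partial x_l}\partial_k E+u_k\partial_k\f{\partial E}{\partial x_l}$, and recasts the last summand in conservation form, $u_k\partial_k\f{\partial E}{\partial x_l}=\partial_k\big(u_k\f{\partial E}{\partial x_l}\big)-(\Dv\u)\f{\partial E}{\partial x_l}$, each piece lying in $L^p(0,T;L^q)$; after integrating by parts this gives $\int_{Q_T}(\u\cdot\nabla E)\f{\partial\psi}{\partial x_l}=-\int_{Q_T}\big(\f{\partial\u}{\partial x_l}\cdot\nabla E\big)\psi+\int_{Q_T}\Dv(\u\psi)\f{\partial E}{\partial x_l}$. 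Collecting the three contributions and transposing the term $-\int_{Q_T}\big(\f{\partial\u}{\partial x_l}\cdot\nabla E\big)\psi$ to the right-hand side reproduces exactly the asserted weak formulation.

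I expect the only genuinely delicate point to be giving meaning to $\u\cdot\nabla\f{\partial E}{\partial x_l}$, since $\f{\partial E}{\partial x_l}$ has no spatial regularity beyond $L^q$; this is precisely why one needs the embedding $W^{2,q}\hookrightarrow C^1$, so that $\u$ acts as a multiplier on first-order distributions, together with the conservative rewriting above. Alternatively, the same conclusion follows by mollifying $E$ in the space variable, differentiating in $x_l$ the resulting classical transport equation for $S_\varepsilon E$, integrating it against $\psi$, and letting $\varepsilon\to0$; the Friedrichs commutator produced by $S_\varepsilon$ tends to $0$ in $L^p(0,T;L^q(\R^3))$ because $\nabla E\in L^\infty(0,T;L^q)$ and $\u\in L^p(0,T;W^{1,\infty})$, exactly as in the proof of Lemma~\ref{sigma}. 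Everything else is routine integration by parts and Fubini's theorem.
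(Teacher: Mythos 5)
Your argument is correct, and your primary route is genuinely different from the paper's. The paper proves Lemma~\ref{ue2} by spatial mollification: it applies $S_\varepsilon$ to \eqref{e13}, differentiates the mollified transport equation in $x_l$, and controls the resulting commutators $\partial_{x_l}\big(\u\cdot\nabla(S_\varepsilon E)-S_\varepsilon(\u\cdot\nabla E)\big)$ via the Friedrichs commutator lemma (Lemma~6.7 in \cite{AI} / Lemma~2.3 in \cite{LI}), then passes to the limit $\varepsilon\to0$. You instead pair \eqref{e13} with $\partial_{x_l}\psi$ and transfer derivatives term by term at the level of the weak formulation, with no regularization at all. What the mollification buys the paper is a pointwise renormalized form that dovetails with the subsequent $L^q$ energy computation \eqref{ue6}, and it only needs $\u\in L^p(0,T;W^{1,\infty})$ rather than $\u\in L^p(0,T;W^{2,q})$; what your route buys is transparency and economy — no commutator estimate is needed to establish \eqref{ue3} itself. (You in fact recognize and sketch the paper's approach as your fallback at the end.)

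One small tightening I would suggest: your intermediate steps invoke the Leibniz rule for products such as $u_k\,\partial_k\partial_l E$, i.e.\ a $C^1$ multiplier acting on a first-order distribution. This is legitimate — $f\,T$ is well defined for $f\in C^1$ and $T$ of order $1$, and the product rule $\partial(fg)=(\partial f)g+f\,\partial g$ holds for $f\in C^1$, $g\in L^1_{\mathrm{loc}}$ — but it is cleaner to observe that the three-term identity
\[
\int_{Q_T} u_k(\partial_k E)(\partial_l\psi)\,dx\,dt
=-\int_{Q_T}\Big(\f{\partial\u}{\partial x_l}\cdot\nabla E\Big)\psi\,dx\,dt
+\int_{Q_T}\Dv(\u\psi)\,\f{\partial E}{\partial x_l}\,dx\,dt
\]
is, for fixed $\u\in W^{2,q}$ and $\psi\in C^\infty_0$, a \emph{continuous} functional of $E$ in the $W^{1,q}$ topology; it holds trivially for $E\in C^2$ by the classical product rule, so density of smooth functions in $W^{1,q}$ closes the argument without ever forming second derivatives of $E$. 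With that remark your proof is complete and self-contained.
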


\begin{proof}
The proof is a direct application of the regularization. Indeed, one easily obtains, using \eqref{e13},
\begin{equation}\label{ue4}
\begin{split}
\partial_t (S_\varepsilon E)+\u\cdot\nabla(S_\varepsilon E)&=S_\varepsilon(\partial_t E+\u\cdot\nabla E)+\u\cdot\nabla(S_\varepsilon E)
-S_\varepsilon(\u\cdot\nabla E)\\
&=S_\varepsilon(\nabla \u E+\nabla \u)+\u\cdot\nabla(S_\varepsilon E)
-S_\varepsilon(\u\cdot\nabla E).
\end{split}
\end{equation}
Differentiate \eqref{ue4} with respect to $x_l$, we get
\begin{equation}\label{ue5}
\begin{split}
&\partial_t \left(\f{\partial S_\varepsilon E}{\partial
x_l}\right)+\u\cdot\nabla\left(\f{\partial S_\varepsilon
E}{\partial x_l}\right)\\&=
S_\varepsilon\left(\f{\partial}{\partial x_l}(\nabla \u E+\nabla
\u)\right)+\f{\partial}{\partial
x_l}\Big(\u\cdot\nabla(S_\varepsilon E)
-S_\varepsilon(\u\cdot\nabla E)\Big)-\f{\partial \u}{\partial
x_l}\cdot\nabla S_\varepsilon E.
\end{split}
\end{equation}
Notice that
\begin{equation*}
\begin{split}
\f{\partial}{\partial x_l}\Big(\u\cdot\nabla(S_\varepsilon E)
-S_\varepsilon(\u\cdot\nabla E)\Big)&=\f{\partial \u}{\partial
x_l}\cdot\nabla S_\varepsilon E-S_\varepsilon
\left(\f{\partial \u}{\partial x_l}\cdot\nabla E\right)\\
&\quad+\u\cdot\nabla S_\varepsilon\left(\f{\partial E}{\partial x_l}\right)-S_\varepsilon\left(\u\cdot\nabla \f{\partial E}{\partial x_l}\right).
\end{split}
\end{equation*}
According to Lemma 6.7 in \cite{AI} (cf. Lemma 2.3 in \cite{LI}),
we know that
$$\f{\partial \u}{\partial x_l}\cdot\nabla
S_\varepsilon E -S_\varepsilon \left(\f{\partial \u}{\partial
x_l}\cdot\nabla E\right)\to   0,\quad
\u\cdot\nabla S_\varepsilon\left(\f{\partial E} {\partial
x_l}\right)-S_\varepsilon\left(\u\cdot\nabla \f{\partial
E}{\partial x_l}\right)\to   0,$$ in $L^1(0,T; L^q(\R^3))$ as
$\varepsilon\to   0$. Hence, $$\f{\partial}{\partial
x_l}\Big(\u\cdot\nabla(S_\varepsilon E)
-S_\varepsilon(\u\cdot\nabla E)\Big)\to   0$$ in $L^1(0,T; L^q(\R^3))$. Thus, letting $\varepsilon\to   0$ in
\eqref{ue5}, we deduce
\begin{equation*}
\begin{split}
\partial_t\f{\partial E}{\partial x_l}+\u\cdot\nabla\f{\partial E}{\partial x_l}=-\f{\partial \u}{\partial x_l}\cdot\nabla E+
\nabla\left(\f{\partial \u}{\partial x_l}\right)E+\nabla \u\f{\partial E}{\partial x_l}+\nabla\f{\partial \u}{\partial x_l},
\end{split}
\end{equation*}
in the sense of weak solutions. The proof of Lemma \ref{ue2} is complete.
\end{proof}

Using \eqref{ue3}, formally we have,
\begin{equation}\label{ue6}
\begin{split}
&\int_{\R^3}\partial_t \left(\f{\partial E}{\partial
x_l}\right)\left|\f{\partial E}{\partial x_l}\right|^{q-2}\f{\partial E} {\partial x_l}dx\\
&=\int_{\R^3}\left(-\u\cdot\nabla\f{\partial E}{\partial
x_l}-\f{\partial \u}{\partial x_l}\cdot\nabla E+
\nabla\left(\f{\partial \u}{\partial x_l}\right)E+\nabla
\u\f{\partial E}{\partial x_l}+\nabla\f{\partial \u}{\partial
x_l}\right) \left|\f{\partial E}{\partial
x_l}\right|^{q-2}\f{\partial E} {\partial x_l}dx\\
&\le C\Big(\|\nabla \u\|_{L^\infty}\|\nabla
E\|^q_{L^q(\R^3)}+\|E\|_{L^\infty(Q_T)}\|\u\|_{W^{2,q}}
\|\nabla E\|^{q-1}_{L^q(\R^3)}+\|\u\|_{W^{2,q}}\|\nabla E\|_{L^q}^{q-1}\Big)\\
&\le C\Big(\|\nabla \u\|_{L^\infty}\|\nabla
\u\|^q_{L^q(\R^3)}+\sqrt{R}\|\u\|_{W^{2,q}} \|\nabla
E\|^{q-1}_{L^q(\R^3)}+\|\u\|_{W^{2,q}}\|\nabla
E\|_{L^q}^{q-1}\Big).
\end{split}
\end{equation}
We remark that the rigorous argument for the above estimate involves a tedious regularization procedure as in
DiPerna-Lions \cite{DL}, thus we omit the details and  refer the   reader to \cite{DL}.
Using \eqref{ue6}, one obtains
\begin{equation}\label{ue7}
\begin{split}
&\left\|\f{\partial E}{\partial x_l}(t)\right\|_{L^q}^p \\
&=\left\|\f{\partial E(0)}{\partial x_l}\right\|_{L^q}^p+\int_0^t\f{d}{ds}
\left\|\f{\partial E}{\partial x_l}(s)\right\|_{L^q}^p ds\\
&=\left\|\f{\partial E(0)}{\partial
x_l}\right\|_{L^q}^p+\f{p}{q}\int_0^t\left[\left\|\f{\partial
E}{\partial x_l}\right\|_{L^q}^{p-q} \int_{\R^3} \left|\f{\partial
E}{\partial x_l}\right|^{q-2}
\left(\f{\partial E}{\partial x_l}\right)\partial_s\left(\f{\partial E}{\partial x_l}(s)\right)dx\right]ds\\
&\le\left\|\f{\partial E(0)}{\partial
x_l}\right\|_{L^q}^p+C\left(\f{p}{q}\right)\int_0^t\left\|\f{\partial
E}{\partial x_l}\right\|_{L^q}^{p-q} \Big[\|\nabla
\u\|_{L^\infty}\|\nabla
E\|^q_{L^q(\R^3)}\\&\quad+\sqrt{R}\|\u\|_{W^{2,q}}
\|\nabla E\|^{q-1}_{L^q(\R^3)}+\|\u\|_{W^{2,q}}\|\nabla E\|_{L^q}^{q-1}\Big]ds\\
&\le\left\|\f{\partial E(0)}{\partial x_l}\right\|_{L^q}^p+C\left(\f{p}{q}\right)\int_0^t\|\nabla E\|_{L^q}^{p-1}
[\|\nabla \u\|_{L^\infty}\|\nabla E\|_{L^q(\R^3)}+(1+\sqrt{R})\|\u\|_{W^{2,q}}]ds\\
&\le \dl^{2p}+C\left(\f{p}{q}\right)\left(\int_0^t\|\nabla E\|_{L^q}^pds\right)^{\f{p-1}{p}}
\left(\int_0^t\|\u\|_{W^{2,q}}^pds\right)^{\f{1}{p}}\left(\max_{t\in[0,T]}\|\nabla E\|+\sqrt{R}+1\right)\\
&\le \dl^{2p}+C\left(\f{p}{q}\right)R^p\left(\max_{t\in[0,T]}\|\nabla E\|+\sqrt{R}+1\right).
\end{split}
\end{equation}
Taking the summation over $l$ in \eqref{ue7} and taking the maximum over the time $t$, one has,
\begin{equation*}
\begin{split}
\max_{t\in[0,T]}\|\nabla E\|^p\le \dl^{2p}+C\left(\f{p}{q}\right)R^p\left(\max_{t\in[0,T]}\|\nabla E\|+\sqrt{R}+1\right),
\end{split}
\end{equation*}
and hence, by letting $R, \dl$ be sufficiently small and using \eqref{2222}, we obtain,
\begin{equation}\label{ue8}
\max_{t\in[0,T]}\|\nabla E\|^p\le \dl^{2p}+CR^{p}<(\sqrt{R})^p.
\end{equation}

We are now left to deal with the quantity $\|E\|_{L^q(\R^3)}$. To this end, from the Gagliardo-Nirenberg inequality, we have
$$\|E\|_{L^q(\R^3)}\le C\|E\|_{L^2(\R^3)}^\theta
\|\nabla E\|_{L^q(\R^3)}^{1-\theta},$$
with $\theta=\f{2q}{5q-6}$. Thus, by H\"{o}lder's inequality, \eqref{w7}, and \eqref{2222}
\begin{equation}\label{GN2}
\begin{split}
\|E\|_{L^{\f{(5q-6)p}{3q-6}}(0,T;L^q(\R^3))}\le C\|E\|_{L^\infty(0,T; L^2(\R^3))}^\theta
\|\nabla E\|_{L^p(0,T; L^q(\R^3))}^{1-\theta}\le CR.
\end{split}
\end{equation}
Hence, we have the following estimate: 

\begin{Lemma}\label{lema1}
Under the same assumptions as Theorem \ref{T1}, it holds
\begin{equation}\label{EEE}
\|E\|_{L^\infty(0,T; L^q(\R^3))}<\sqrt{R},
\end{equation}
for any $T\in[0, T_{\rm{max}}]$.
\end{Lemma}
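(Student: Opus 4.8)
The plan is to read off \eqref{EEE} directly from the Gagliardo--Nirenberg inequality displayed just above the statement, by feeding into it the two uniform-in-time bounds that have already been secured: the $L^2$ control of $E$ from \eqref{w7} and the $L^q$ control of $\nabla E$ from \eqref{ue8}. So essentially all of the work has been done; this lemma is a one-step interpolation.

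First I would recall that, since $q>3$, the Gagliardo--Nirenberg inequality in $\R^3$ gives, for a.e.\ $t\in(0,T)$,
$$\|E(t)\|_{L^q(\R^3)}\le C\,\|E(t)\|_{L^2(\R^3)}^{\theta}\,\|\nabla E(t)\|_{L^q(\R^3)}^{1-\theta},\qquad \theta=\f{2q}{5q-6}\in(0,1),$$
the admissibility $\theta\in(0,1)$ being exactly what $q>3$ (indeed $q>2$) buys. This is the same inequality used in \eqref{GN2}, only now I keep it pointwise in $t$ rather than integrating it against a time norm. Taking the supremum over $t\in[0,T]$ and inserting $\|E\|_{L^\infty(0,T;L^2(\R^3))}\le CR^2$ from \eqref{w7} together with $\|\nabla E\|_{L^\infty(0,T;L^q(\R^3))}<\sqrt{R}$ from \eqref{ue8}, I obtain
$$\|E\|_{L^\infty(0,T;L^q(\R^3))}\le C\big(R^2\big)^{\theta}\big(\sqrt{R}\big)^{1-\theta}=C\,R^{\f{3\theta+1}{2}}.$$
Since $\theta>0$ we have $\f{3\theta+1}{2}>\f12$, and $R=\dl_0$ is at our disposal to be taken as small as we please; hence $C R^{(3\theta+1)/2}<R^{1/2}=\sqrt{R}$ once $R$ is small enough, which is precisely \eqref{EEE}, uniformly for $T\in[0,T_{\rm max}]$.

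There is no genuine obstacle at this stage: the analytic effort was already spent in \eqref{w7} (the basic energy identity, which crucially exploits the constraint \eqref{cc} and the identity \eqref{w6} to keep $\tfrac12\r|E|^2$ inside the conserved energy) and in \eqref{ue8} (the uniform $L^\infty_tL^q_x$ bound on $\nabla E$, derived from \eqref{ue3} and the $L^p_tL^q_x$ dissipation \eqref{2222}). The only point worth flagging is that one must use the $L^\infty$-in-time bound \eqref{ue8} on $\nabla E$, and not merely the $L^p$-in-time bound \eqref{2222}, because passing the supremum in $t$ through the product in the Gagliardo--Nirenberg inequality requires each factor to be controlled uniformly in $t$; that is exactly why \eqref{ue8} is proved before this lemma.
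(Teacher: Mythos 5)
Your proof is correct, and it is genuinely more direct than the one in the paper. The paper does not apply Gagliardo--Nirenberg pointwise in $t$. Instead it derives a time-differential inequality for $\|E(t)\|_{L^q}^{\alpha}$ with $\alpha=\tfrac{(5q-6)(p-1)}{3q-6}+1$ from the transport equation \eqref{e13}, integrates in time, applies H\"older in $t$ to produce the mixed norm $\|E\|_{L^{(5q-6)p/(3q-6)}(0,T;L^q)}$, and controls that mixed norm by the time-averaged Gagliardo--Nirenberg estimate \eqref{GN2}, which uses only $\|E\|_{L^\infty_t L^2_x}$ and the dissipative bound $\|\nabla E\|_{L^p_t L^q_x}$ from \eqref{2222}; it never actually needs the full strength of the $L^\infty_t L^q_x$ bound \eqref{ue8} on $\nabla E$, the factor $2\sup_t\|E(t)\|_{L^q}+1$ in \eqref{lema2} being absorbed directly from the definition of $T_{\max}$. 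You instead exploit \eqref{ue8} to its fullest: once one knows $\sup_t\|\nabla E(t)\|_{L^q}<\sqrt{R}$, a single pointwise-in-time interpolation with $\|E(t)\|_{L^2}\le CR^2$ from \eqref{w7} immediately yields $\|E(t)\|_{L^q}\le CR^{(3\theta+1)/2}$ with $\theta=\tfrac{2q}{5q-6}\in(0,1)$, and the exponent $\tfrac{3\theta+1}{2}>\tfrac12$ lets you beat $\sqrt{R}$ for $R$ small. Both approaches are sound; yours is shorter and avoids the differential-inequality/continuation bookkeeping, at the cost of leaning on \eqref{ue8}, which in the paper's logic is proved just before this lemma and so is legitimately available. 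The one caveat you already flag yourself---that one must use the $L^\infty_t$ bound \eqref{ue8} rather than only the $L^p_t$ bound \eqref{2222} when taking the supremum through the product---is exactly the right point to call out, since it is what makes the pointwise route possible and distinguishes it from the paper's time-averaged route.
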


\begin{proof}
By  \eqref{e13}, \eqref{2222} and \eqref{ue8}, and letting
$$\alpha=\f{(5q-6)(p-1)}{3q-6}+1,$$  one obtains,
\begin{equation}\label{lema2}
\begin{split}
&\|E(t)\|_{L^q}^\alpha \\
&=\|E(0)\|_{L^q}^\alpha+\int_{0}^t\f{d}{ds}\|E(s)\|_{L^q}^\alpha ds\\
&=\|E(0)\|^\alpha_{L^q}+\f{\alpha}{q}\int_{0}^t\left(\|E(s)\|_{L^q}^{\alpha-q}\int_{\R^3} |E(s)|^{q-2}
E(s)\partial_sE(s)dx\right)ds\\
&=\|E(0)\|^\alpha_{L^q}+\f{\alpha}{q}\int_{0}^t\left(\|E(s)\|_{L^q}^{\alpha-q}\int_{\R^3} |E(s)|^{q-2}
E(s)\Big[\nabla \u E+\nabla \u-\u\cdot\nabla E\Big]dx\right)ds\\
&\le
\|E(0)\|^\alpha_{L^q}+\f{\alpha}{q}\int_{0}^t\left(\|E(s)\|_{L^q}^{\alpha-1}\Big[2\|\nabla
\u\|_{L^\infty}\|E\|_{L^q}+
\|\nabla \u\|_{L^q}\Big]\right)ds\\
&\le \|E(0)\|^\alpha_{L^q}+\f{\alpha}{q}\left(\int_{0}^t\|E(s)\|_{L^q}^{\f{(5q-6)p}{3q-6}}dt\right)^{\f{p-1}{p}}
\|\u\|_{L^p(0,T; W^{2,q}(\R^3))}\\
&\qquad\qquad\qquad\qquad\qquad\qquad\qquad\qquad\qquad\qquad
 \times\left(2\sup_{t\in(0,T_{\max})}\|E(t)\|_{L^q(\R^3)}+1\right)\\
&\le \|E(0)\|^\alpha_{L^q}+\left(\f{2\alpha}{q}\right)R\left(\int_{0}^t\|E(s)\|_{L^q}^{\f{(5q-6)p}{3q-6}}dt\right)^{\f{p-1}{p}}\\
&\le\|E(0)\|^\alpha_{L^q}+CR\|E\|_{L^{\f{(5q-6)p}{3q-6}}(0,T; L^q(\R^3))}^{\alpha-1}.
\end{split}
\end{equation}
Then, according to \eqref{GN2}, one has, for all $t\in[0,T_{\textrm{max}}]$,
\begin{equation}\label{GN3}
\begin{split}
\|E(t)\|_{L^q}^\alpha
\le \dl^{2\alpha}+CR^{\alpha}< \sqrt{R}^{\alpha},
\end{split}
\end{equation}
if $R$ is sufficiently small.  Thus, \eqref{EEE} follows from \eqref{GN3}.
The proof of Lemma \ref{lema1} is complete.
\end{proof}

Lemma \ref{lema1}, together with \eqref{ue8} and Lemma \ref{ue}, gives
\begin{equation}\label{ue9}
\max_{t\in[0,T]}\max\big\{\|\S(\u)-1\|_{W^{1,q}}(t),
\|\sigma\|_{L^{q}}(t), \|T(\u)\|_{W^{1,q}}(t)\big\}\le CR<\sqrt{R}.
\end{equation}
Similarly, we can obtain
\begin{equation}\label{ue99}
\max_{t\in[0,T]}\max\big\{\|\S(\u)-1\|_{W^{1,2}}(t),
\|\sigma\|_{L^2}(t), \|T(\u)\|_{W^{1,2}}(t)\big\}\le CR<\sqrt{R}.
\end{equation}

\subsection{Refined estimates on $\nabla\rho$ and $\nabla E$}

In order to prove Theorem \ref{T1}, we need some refined estimates
on $\|\nabla\r\|_{L^2(0,T; L^q(\R^3))}$ and $\|\nabla
E\|_{L^2(0,T; L^q(\R^3))}$.

\begin{Lemma} \label{rrr}
\begin{equation}\label{cl5}
\|\nabla\r\|_{L^2(0,T; L^q(\R^3))}\le CR,
\end{equation}
for any $T\in(0, T_{\rm{max}})$.
\end{Lemma}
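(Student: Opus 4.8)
The plan is to obtain \eqref{cl5} as an immediate consequence of the dissipation estimate for $\sigma=\nabla\ln\r$ already established in Section \ref{estimates}. Pointwise one has $\nabla\r=\r\,\sigma$, and for any $T<T_{\rm max}$ the bound $\|\S(\u)-1\|_{L^\infty(0,T;W^{1,q})}\le\sqrt R$ together with the embedding $W^{1,q}(\R^3)\hookrightarrow L^\infty(\R^3)$ (available since $q>3$) forces $\f12\le\r\le\f32$ on $Q_T$ once $R$ is small; hence $|\nabla\r|\le\f32|\sigma|$ a.e.\ on $Q_T$, so that $\|\nabla\r\|_{L^2(0,T;L^q(\R^3))}\le\f32\|\sigma\|_{L^2(0,T;L^q(\R^3))}$. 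It therefore suffices to estimate $\|\sigma\|_{L^2(0,T;L^q(\R^3))}$.

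For that I would simply invoke Lemma \ref{E2} with $p=2$, which is precisely the range relevant to Theorem \ref{T1}(II) treated in this section: estimate \eqref{12345} then reads $\|\sigma\|_{L^2(0,t;L^q(\R^3))}\le C(q,\mu)R$ with a constant independent of time, and \eqref{cl5} follows with $C=\f32C(q,\mu)$. The only point deserving comment is that Lemma \ref{E2}, and the auxiliary Lemma \ref{E1} on which it relies, were stated on the interval $[0,T]$ with $T=\min\{T(R),T_1\}$ coming from Section \ref{local}; but their proofs use only the four smallness bounds $\|\u\|_{\mathcal W(0,T)}\le R$, $\|\S(\u)-1\|_{L^\infty(0,T;W^{1,q})}\le\sqrt R$, $\|\sigma\|_{L^\infty(0,T;L^q)}\le\sqrt R$, $\|\T(\u)\|_{L^\infty(0,T;W^{1,q})}\le\sqrt R$, the constraint \eqref{cc1} (which propagates from \eqref{cc}), and the structural assumption \eqref{AA}. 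By the very definition of $T_{\rm max}$ all of these hold on $(0,T_{\rm max})$, so \eqref{5113} and \eqref{12345} are valid verbatim on every $(0,T)$ with $T<T_{\rm max}$; this is what makes the conclusion uniform.

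There is no serious obstacle here: the substance is contained in Lemma \ref{E2}, and the only care required is the bookkeeping just described, namely transferring the a priori bounds of Section \ref{estimates} to the maximal interval $(0,T_{\rm max})$ introduced at the start of Section \ref{global}. If one preferred a self-contained derivation, one could instead rerun the energy scheme of Lemma \ref{E2}: multiply the momentum equation \eqref{e12} by $\sigma|\sigma|^{q-2}$, integrate over $\R^3$, use \eqref{cc1} to replace $\Dv(\r(I+E)(I+E)^\top)$ by $\r(e_j+E_j)\partial_{x_j}E_i$, bound all $\|\sigma\|_{L^q}$-linear terms by Young's inequality exploiting the smallness of $R$, and close via Gronwall together with the dissipation bound \eqref{2222} on $\|\nabla E\|_{L^2(0,T;L^q)}$; this reproduces the same $CR$ estimate but adds nothing beyond the computation already recorded.
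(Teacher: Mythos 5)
Your argument is correct, but it takes a genuinely different route from the paper's. The paper does not invoke Lemma \ref{E2} here at all: it takes the divergence of the momentum equation \eqref{e12}, uses the constraint $\Dv(\r(I+E)^\top)=0$ to convert $\Dv\Dv(\r E)$ into $-\D\r$, arrives at the elliptic identity
\begin{equation*}
\D\bigl(P(\r)+\r\bigr)=\Dv\Dv(\r EE^\top)-\Dv(\r\u\cdot\nabla\u)-\Dv(\r\partial_t\u)+(\lambda+2\mu)\D\Dv\u,
\end{equation*}
and then applies $L^q$ elliptic regularity together with the smallness bounds on $\u$, $E$, $\r$, plus the bound $\|\nabla E\|_{L^2(0,T;L^q)}\le CR$ from \eqref{2222}, absorbing the $\|\nabla\r\|\,\|E\|^2_{L^\infty}$ term into the left-hand side. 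Your approach bypasses this elliptic detour entirely: since $\nabla\r=\r\sigma$ and $\f12\le\r\le\f32$, the bound reduces to $\|\sigma\|_{L^2(0,T;L^q)}$, which is exactly \eqref{12345} at $p=2$. Your bookkeeping remark --- that the proofs of Lemmas \ref{E1} and \ref{E2} use only the four smallness bounds built into the definition of $T_{\rm max}$ together with \eqref{cc1} and \eqref{AA}, so the estimates carry over verbatim to any $(0,T)$ with $T<T_{\rm max}$ --- is precisely the right point to flag and is correct. Note, however, that the two routes are not really independent in their ingredients: the paper's elliptic argument still consumes $\|\nabla E\|_{L^2(0,T;L^q)}\le CR$, which itself comes from Lemmas \ref{E1} and \ref{E2}; what the elliptic formulation buys is the identity $\D(P(\r)+\r)=\text{lower order}$, which is potentially useful for further regularity of $\r$ even though it offers nothing extra for the particular estimate \eqref{cl5}. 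Your shortcut is the more economical derivation of the stated bound, and it is valid because Theorem \ref{T1}(II) fixes $p=2$, so the generic $L^p$ dissipation of Lemma \ref{E2} specializes to exactly the $L^2$-in-time norm appearing in \eqref{cl5}.
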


\begin{proof}
Taking the divergence in \eqref{e12}, one obtains
\begin{equation}\label{cl3}
\D P(\r)=\Dv(\Dv(\r EE^\top))+\Dv(\Dv(\r
E))-\Dv(\r\u\cdot\nabla\u)-\Dv(\r\partial_t\u)+(\lambda+2\mu)\D\Dv\u.
\end{equation}
Since, $\Dv(\r(I+E)^\top)=0$, we get
\begin{equation*}
\Dv(\Dv(\r E))=\f{\partial}{\partial x_i}\f{\partial}{\partial
x_j}(\r E_{ij})=\f{\partial}{\partial x_j}\f{\partial}{\partial
x_i}(\r E_{ij})=-\D\r
\end{equation*}
in the sense of distributions. Hence, \eqref{cl3} becomes
\begin{equation}\label{cl4}
\D P(\r)+\D\r=\Dv\Dv(\r
EE^\top)-\Dv(\r\u\cdot\nabla\u)-\Dv(\r\partial_t\u)+(\lambda+2\mu)\D\Dv\u.
\end{equation}
Hence, one obtains, using $L^q$ theory of elliptic equations and Taylor's formula,
\begin{equation*}
\begin{split}
\|\nabla \r\|_{L^2(0,T; L^{q})}
&\le C\Big(\|\r\u\nabla\u\|_{L^2(0,T; L^{q})}+\|\Dv(\r EE^\top)\|_{L^2(0,T; L^{q})}\\
&\quad+\|\r\partial_t\u\|_{L^2(0,T; L^q)}+\|\nabla\u\|_{L^2(0,T; L^q)}\Big)\\
&\le C\Big(\|\r\|_{L^\infty(Q_T)}\|\nabla\u\|_{L^2(0,T; L^\infty)}\|\u\|_{L^\infty(0,T;
L^q)}
+\|\nabla\r\|_{L^2(0,T;
L^q)}\|E\|_{L^\infty(Q_T)}^2\\&\quad+\|\r\|_{L^\infty(Q_T)}\|\nabla
E\|_{L^2(0,T; L^q)}\|E\|_{L^\infty(Q_T)}\\&\quad+\|\r\|_{L^\infty(Q_T)}\|\partial_t\u\|_{L^2(0,T; L^q)}
+\|\nabla\u\|_{L^2(0,T; L^q)}\Big)\\
&\le C\Big(\|\r\|_{L^\infty(Q_T)}\|\u\|_{L^2(0,T;
W^{2,q})}\|\u\|_{L^\infty(0,T; L^q)}\\&\quad+\|\nabla\r\|_{L^2(0,T;
L^q)}\|E\|_{L^\infty(0,T; W^{1,q}
)}^2\\&\quad+\|\r\|_{L^\infty(Q_T)}\|\nabla E\|_{L^2(0,T;
L^q)}\|E\|_{L^\infty(0,T; W^{1,q}))}+R\Big)\\
&\le C(R^2+R)\le CR.
\end{split}
\end{equation*}
The proof of Lemma \ref{rrr} is complete.
\end{proof}

Ir order to refine $\|\nabla E\|_{L^2(0,T; L^q(\R^3))}$, we need
the following estimate:

\begin{Lemma}\label{EEE3}
\begin{equation}\label{809}
\|\partial_t\u\|_{L^2(0,T; L^q(\R^3))}\le CR^{\f{3-\t}{2}},
\end{equation}
for any $T\in(0, T_{\rm{max}})$ and some $\t\in (\f{1}{2},1]$.
\end{Lemma}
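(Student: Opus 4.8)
The plan is to control $\partial_t\u$ in $L^\infty(0,T;L^2(\R^3))$ together with $\na\partial_t\u$ in $L^2(0,T;L^2(\R^3))$ by a time‑differentiated energy estimate for the momentum equation, and then to interpolate between these two bounds to recover the $L^2(0,T;L^q(\R^3))$ norm. Using Remark \ref{QQ} (so that the stress term simplifies as in the computation preceding \eqref{cl4}), write \eqref{e12} as
\[
\r\,\partial_t\u-\mu\D\u-(\mu+\lambda)\na\Dv\u=H,\qquad H:=-\r(\u\cd\na)\u-\na P(\r)+\Dv\big(\r(I+E)(I+E)^\top\big).
\]
Evaluating this at $t=0$, using the compatibility condition \eqref{cc} to keep the simplified stress term, the smallness \eqref{hh} of $\|\D\u_0\|_{L^2}$, $\|\u_0\cd\na\u_0\|_{L^2}$, $\|\na\r_0\|_{L^2}$, $\|\na E_0\|_{L^2}$, and the lower bound $\r_0\ge C_0>0$ from Remark \ref{yy1}, one gets $\|\partial_t\u(0)\|_{L^2(\R^3)}\le C\dl^4$.

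Differentiating the momentum equation in $t$, testing against $\partial_t\u$, integrating over $\R^3$, and using the continuity equation \eqref{e11} in the form $\partial_t\r=-\Dv(\r\u)$ to rewrite the time derivative of the kinetic term, one arrives at
\[
\f12\f{d}{dt}\int_{\R^3}\r|\partial_t\u|^2\,dx+\int_{\R^3}\big(\mu|\na\partial_t\u|^2+(\mu+\lambda)|\Dv\partial_t\u|^2\big)\,dx
=\int_{\R^3}\partial_t H\cd\partial_t\u\,dx-\int_{\R^3}\r\,(\u\cd\na)\partial_t\u\cd\partial_t\u\,dx.
\]

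The heart of the matter is to estimate the right‑hand side so that, after Young's inequality absorbs a small multiple of $\int|\na\partial_t\u|^2$ into the left‑hand side, the remainder has the Gronwall form $a(t)\int\r|\partial_t\u|^2+b(t)$ with $\int_0^T a\,dt\le C$ and $\int_0^T b\,dt$ a high power of $R$. One substitutes $\partial_t\r=-\u\cd\na\r-\r\Dv\u$ and $\partial_t E=\na\u\,(I+E)-\u\cd\na E$ from \eqref{e11}, \eqref{e13}; whenever a second spatial derivative of $\u$ (coming from $\na\partial_t\r$ or from $\partial_t\na E=\na^2\u\,(I+E)-\u\cd\na^2 E$) or a derivative of $\na E$ would land on a low‑regularity factor, one integrates by parts in $x$ so that the derivative falls on $\partial_t\u$ and is paid for by the dissipation, the surviving coefficients being controlled by the uniform bounds already in hand: $\|\u\|_{L^\infty(Q_T)}\le CR$, $\|E\|_{L^\infty(Q_T)}\le CR$ (Section \ref{estimates}, Lemma \ref{lema1}), $\|\na\r\|_{L^\infty(0,T;L^2\cap L^q)}+\|\na E\|_{L^\infty(0,T;L^2\cap L^q)}\le CR$ (from \eqref{ue9}, \eqref{ue99}), $\|\na\u\|_{L^\infty(Q_T)}\le C\|\u\|_{W^{2,q}}$ (since $q>3$) with $\|\u\|_{W^{2,q}}\in L^2(0,T)$, $\|\na\u\|_{L^2(0,T;L^2)}\le CR^2$ (Lemma \ref{ul}), $\|\na\r\|_{L^2(0,T;L^q)}\le CR$ (Lemma \ref{rrr}), $\|\na E\|_{L^2(0,T;L^q)}\le CR$ (from \eqref{2222}); these give in particular $\|\partial_t\r\|_{L^2(0,T;L^2)}\le CR^2$ and $\|\partial_t E\|_{L^2(0,T;L^2)}\le CR^2$. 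The place where the null‑type structure is genuinely needed is the term $\int\r(I+E):\partial_t(\na E)\cd\partial_t\u$ (and the companion term with $\na^2\u$ produced by $\partial_t\na E$): since no bound on $\na^2 E$ is available, one integrates by parts in $x$ and uses $\Dv(\r(I+E)^\top)=0$ to reduce the divergence‑type coefficient that then appears to first derivatives of $\r$ and $E$, after which all factors are handled as above. Gronwall's inequality, together with $\r\ge\f12$, then gives, with $C$ independent of $T\in(0,T_{\textrm{max}})$ and of $\dl$,
\[
\|\partial_t\u\|_{L^\infty(0,T;L^2(\R^3))}+\|\na\partial_t\u\|_{L^2(0,T;L^2(\R^3))}\le CR^{3/2}.
\]

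Finally, for $q\in(3,6]$ set $\t:=\f32-\f3q\in(\f12,1]$, so that the Gagliardo-Nirenberg inequality on $\R^3$ reads $\|\partial_t\u\|_{L^q(\R^3)}\le C\|\partial_t\u\|_{L^2(\R^3)}^{1-\t}\|\na\partial_t\u\|_{L^2(\R^3)}^{\t}$. Squaring, integrating in $t$, and applying H\"older in time with exponents $\f1{1-\t}$ and $\f1\t$ (using $T\le T_0$),
\[
\|\partial_t\u\|_{L^2(0,T;L^q(\R^3))}\le C\,\|\partial_t\u\|_{L^\infty(0,T;L^2)}^{1-\t}\,\|\na\partial_t\u\|_{L^2(0,T;L^2)}^{\t}\le C\,R^{3/2}\le C\,R^{\f{3-\t}{2}},
\]
which is \eqref{809}. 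The main obstacle is precisely the bookkeeping in the energy estimate: controlling $\partial_t H$ calls on essentially all the uniform bounds established so far simultaneously, and especially on treating $\partial_t\big(\r(I+E):\na E\big)$, whose worst piece contains $\partial_t\na E=\na^2\u\,(I+E)-\u\cd\na^2 E$; the complete absence of control on $\na^2 E$ forces an integration by parts back onto $\partial_t\u$ followed by essential use of $\Dv(\r(I+E)^\top)=0$, and one must still close the Gronwall loop relying only on the $L^2$‑in‑time integrability of $\|\u\|_{W^{2,q}}$.
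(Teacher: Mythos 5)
Your proposal follows essentially the paper's route: a time-differentiated energy estimate for the momentum equation, closed via the uniform bounds from Sections~6--7 together with the initial bound $\|\partial_t\u(0)\|_{L^2}\lesssim\dl^4$ from \eqref{hh}, followed by a Gagliardo--Nirenberg interpolation with exponent $\t=\f32-\f3q$. The main organizational difference is that the paper \emph{first} tests \eqref{e12} against $\partial_t\u$ without differentiating in time (\eqref{803}), which yields both $\|\partial_t\u\|_{L^2(Q_T)}\le CR$ --- used in its final interpolation, where you instead invoke $\|\partial_t\u\|_{L^\infty(0,T;L^2)}$ --- and, more importantly, $\|\nabla\u\|_{L^\infty(0,T;L^2)}\le CR$ (\eqref{805}), which recurs throughout the $J_j$ estimates (e.g.\ $J_2, J_3, J_5, J_6, J_7$) as a pointwise-in-time coefficient. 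That bound is missing from your inventory, and the $L^2$-in-time bound $\|\nabla\u\|_{L^2(0,T;L^2)}\le CR^2$ that you do cite cannot replace it where a coefficient in front of $\|\nabla\partial_t\u\|_{L^2}^2$ must be absorbed; you would either have to run the paper's preliminary estimate or note that $\|\nabla\u\|_{L^\infty(0,T;L^2)}\le CR$ already follows from $\u\in B_R(0)\subset\mathcal{W}^{2,2}(0,T)$ via the trace inequality in Theorem~\ref{T3}. Finally, your claim that $\Dv(\r(I+E)^\top)=0$ is ``genuinely needed'' inside the $J$-estimates to tame $\partial_t\nabla E$ is a bit off: the paper uses the constraint exactly once, \emph{before} any time differentiation, to reduce the stress to $\Dv(\r E)+\Dv(\r EE^\top)$, and then integrates the divergence onto $\partial_t\u$ prior to applying $\partial_t$; consequently $\partial_t(\r E)$ and $\partial_t(\r EE^\top)$ contain only $\nabla\r$, $\nabla E$, $\nabla\u$ (no $\nabla^2 E$), and the constraint is never re-invoked in the $J_j$'s.
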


\begin{proof}
We first notice that, by the Gagliardo-Nirenberg inequality,
for $q\in (3,6]$,
\begin{equation}\label{1616}
\|\u\|_{L^2(0,T; L^q(\R^3))}\le C\|\nabla\u\|_{L^2(0,T;
L^2(\R^3))}^\theta\|\u\|_{L^2(0,T; L^2(\R^3))}^{1-\theta}\le
CR^{1+\theta},
\end{equation}
with $\theta=\f{3(q-2)}{2q}\in(\f{1}{2},1]$. Next, we multiply
\eqref{e12} by $\partial_t\u$ and integrate over $\R^3\times(0,t)$
to deduce
\begin{equation}\label{803}
\begin{split}
&\int_0^t\int_{\R^3}\r|\partial_t\u|^2 dxds
   +\int_{\R^3}(\mu|\nabla\u(t)|^2+(\lambda+\mu)|\Dv\u(t)|^2)dx\\
&=\int_{\R^3}(\mu|\nabla\u_0|^2+(\lambda+\mu)|\Dv\u_0 |^2)dx
 -\int_0^t\int_{\R^3}\r\u\cdot\nabla\u\cdot\partial_t\u dxds
 -\int_0^t\int_{\R^3}\nabla P\partial_t\u dxds \\
&\quad +\int_0^t\int_{\R^3}\Dv(\r EE^\top)\partial_t\u dxds
 +\int_0^t\int_{\R^3}\Dv(\r E)\partial_t\u dxds\\
&:=\int_{\R^3}(\mu|\nabla\u_0|^2+(\lambda+\mu)|\Dv\u_0
|^2)dx+\sum_{i=1}^4 I_i,
\end{split}
\end{equation}
with the following estimates on $I_i$ $(i=1...4)$:
recalling $Q_T=\R^3\times(0,T)$,
\begin{equation*}
\begin{split}
|I_1|&\le\|\sqrt{\r}\partial_t\u\|_{L^2(Q_T)}\|\sqrt{\r}\|_{L^\infty(Q_T)}\|\u\|_{L^\infty(0,T;
L^2(\R^3))}\|\nabla \u\|_{L^2(0,T;
L^\infty(\R^3))}\\&\le\f{1}{8}\int_0^T\int_{\R^3}\r|\partial_t\u|^2dxds+CR^4;
\end{split}
\end{equation*}
\begin{equation*}
\begin{split}
|I_2|&\le
C\|\nabla\r\|_{L^2(Q_T)}\|\sqrt{\r}\partial_t\u\|_{L^2(Q_T)}\le\f{1}{8}\int_0^T\int_{\R^3}\r|\partial_t\u|^2dxds+C\|\nabla\r\|_{L^2(Q_T)}^2\\&
\le\f{1}{8}\int_0^T\int_{\R^3}\r|\partial_t\u|^2dxds+CR^2;
\end{split}
\end{equation*}
\begin{equation*}
\begin{split}
|I_3|&\le
C\|\nabla\r\|_{L^2(Q_T)}\|E\|^2_{L^\infty(Q_T)}\|\sqrt{\r}\partial_t\u\|_{L^2(Q_T)}
+C\|E\|_{L^\infty(Q_T)}\|\nabla
E\|_{L^2(Q_T)}\|\sqrt{\r}\partial_t\u\|_{L^2(Q_T)}\\
&\le\f{1}{8}\int_0^T\int_{\R^3}\r|\partial_t\u|^2dxds+CR^4;
\end{split}
\end{equation*}
\begin{equation*}
\begin{split}
|I_4|&\le\left|\int_0^t\int_{\R^3}\partial_t(\r E)\nabla \u
dxds\right|+\left|\int_{\R^3}\r_0 E_0\nabla
\u_0dx\right|+\left|\int_{\R^3}\r(T) E(T)\nabla \u(T)dx\right|\\
&\le (\|\r\|_{L^\infty(Q_T)}\|\partial_t
E\|_{L^2(Q_T)}+\|E\|_{L^\infty(Q_T)}\|\partial_t\r\|_{L^2(Q_T)})\|\nabla\u\|_{L^2(Q_T)}\\&\quad+CR^3+(\|\nabla\r(T)\|_{L^2(\R^3)}\|E(T)\|_{L^\infty(\R^3)}+
\|\nabla
E(T)\|_{L^2(\R^3)}\|\r(T)\|_{L^\infty(\R^3)})\|\u(T)\|_{L^2(\R^3)}\\
&\le CR^3,
\end{split}
\end{equation*}
where, for the estimate $I_4$, we used equations \eqref{e11},
\eqref{e13}, Lemma \ref{ul} and estimate \eqref{1616}. Thus,
from \eqref{803}, one obtains
\begin{equation}\label{804}
\|\partial_t\u\|_{L^2(Q_T)}\le CR,
\end{equation}
and
\begin{equation}\label{805}
\begin{split}
\|\nabla\u\|_{L^\infty(0,T; L^2(\R^3))}\le CR.
\end{split}
\end{equation}
Now, we differentiate \eqref{e12} with respect to $t$, multiply
the resulting equation by $\partial_t\u$, and integrate it over
$\R^3$ to obtain
\begin{equation}\label{806}
\begin{split}
&\f{1}{2}\f{d}{dt}\int_{\R^3}\r|\partial_t\u|^2dx+\int_{\R^3}(\mu|\nabla\partial_t\u|^2+(\lambda+\mu)|\Dv
\partial_t\u|^2)dx\\&=\f{1}{2}\int_{\R^3}\partial_t\r|\partial_t\u|^2dx-\int_{\R^3}\partial_t\r
\u\cdot\nabla\u\cdot\partial_t\u
dx\\&\quad-\int_{\R^3}\r\partial_t\u\cdot\nabla\u\cdot\partial_t\u
dx-\int_{\R^3}\r\u\cdot\nabla\partial_t\u\cdot\partial_t\u
dx-\int_{\R^3}\nabla\partial_tP\partial_t\u
dx\\&\quad-\int_{\R^3}\partial_t(\r EE^\top)\nabla\partial_t \u
dx-\int_{\R^3}\partial_t(\r E)\nabla\partial_t \u
dx\\&:=\sum_{j=1}^7 J_j,
\end{split}
\end{equation}
where using \eqref{805}, we can control $J_j$ $(j=1...7)$ as
follows:
\begin{equation*}
\begin{split}
|J_1|=\left|\int_{\R^3}\nabla\r\u|\partial_t\u|^2 dx\right|\le
\|\partial_t\u\|_{L^6}^2\|\nabla\r\|_{L^3}\|\u\|_{L^3}\le
CR^2\|\nabla\partial_t\u\|_{L^2}^2;
\end{split}
\end{equation*}
\begin{equation*}
\begin{split}
|J_2|&\le
\|\partial_t\u\|_{L^6}\|\nabla\r\|_{L^3}\|\u\|_{L^6}^2\|\nabla\u\|_{L^6}\le
R\|\nabla\partial_t\u\|_{L^2}\|\nabla\u\|_{L^2}^2\|\D
\u\|_{L^2}\\
&\le R^2\|\nabla\partial_t\u\|_{L^2}^2+R^3\|\D\u\|_{L^2}^2;
\end{split}
\end{equation*}
\begin{equation*}
\begin{split}
|J_3|&\le
\|\r\|_{L^\infty}\|\nabla\u\|_{L^{\f{3}{2}}}\|\partial_t\u\|_{L^6}^2\le\|\r\|_{L^\infty}\|\u\|_{L^2}^{\f{1}{2}}\|\nabla\u\|_{L^2}^{\f{1}{2}}\|\nabla\partial_t\u\|_{L^2}^2\le
C R^{\f{7}{4}}\|\nabla\partial_t\u\|_{L^2}^2;
\end{split}
\end{equation*}
\begin{equation*}
\begin{split}
|J_4|&\le
\|\r\|_{L^\infty}\|\u\|_{L^3}\|\nabla\partial_t\u\|_{L^2}\|\partial_t\u\|_{L^6}\le
CR\|\nabla\partial_t\u\|_{L^2}^2;
\end{split}
\end{equation*}
\begin{equation*}
\begin{split}
|J_5|&\le C\|\partial_t\r\|_{L^2}\|\nabla\partial_t\u\|_{L^2}\le
C\|\nabla\r\u\|_{L^2}\|\nabla\partial_t\u\|_{L^2}\\
&\le
C\|\nabla\r\|_{L^3}\|\u\|_{L^6}\|\nabla\partial_t\u\|_{L^2}\le
C\|\nabla\r\|_{L^3}\|\nabla\u\|_{L^2}\|\nabla\partial_t\u\|_{L^2}\\
&\le
\f{\mu}{4}\|\nabla\partial_t\u\|_{L^2}^2+CR^2\|\nabla\u\|_{L^2}^2;
\end{split}
\end{equation*}
\begin{equation*}
\begin{split}
|J_6|&
\le \|\partial_t\r\|_{L^2}\|E\|_{L^\infty}^2\|\nabla\partial_t\u\|_{L^2}+\|\r\|_{L^\infty}\|E\|_{L^\infty}\|\partial_t
E\|_{L^2}\|\nabla\partial_t\u\|_{L^2}\\
&\le R^2\|\nabla\r\|_{L^3}\|\u\|_{L^6}\|\nabla\partial_t\u\|_{L^2}+R\|\nabla\u
E-\u\cdot\nabla E+\nabla\u\|_{L^2}\|\nabla\partial_t\u\|_{L^2}\\
&\le R^2\|\nabla\r\|_{L^3}\|\nabla\u\|_{L^2}\|\nabla\partial_t\u\|_{L^2}\\
&\quad +R\left(\|\nabla\u\|_{L^3}\|E\|_{L^6}+\|\nabla
E\|_{L^3}\|\u\|_{L^6}+\|\nabla\u\|_{L^2}\right)\|\nabla\partial_t\u\|_{L^2}\\
&\le
R^6\|\nabla\u\|_{L^2}^2+\f{\mu}{4}\|\nabla\partial_t\u\|_{L^2}^2+R^2(R^2\|\nabla\u\|_{L^3}^2+\|\nabla\u\|_{L^2}^2);
\end{split}
\end{equation*}
and
\begin{equation*}
\begin{split}
|J_7|&\le \|\r\|_{L^\infty}\|\partial_t
E\|_{L^2}\|\nabla\partial_t
\u\|_{L^2}+\|E\|_{L^\infty}\|\partial_t\r\|_{L^2}\|\nabla\partial_t\u\|_{L^2}\\
&\le\f{\mu}{4}\|\nabla\partial_t\u\|_{L^2}^2+R^2\|\nabla\u\|_{L^3}^2+\|\nabla\u\|_{L^2}^2+R^2\|\nabla\r\u\|_{L^2}^2\\
&\le\f{\mu}{4}\|\nabla\partial_t\u\|_{L^2}^2+R^2\|\nabla\u\|_{L^3}^2+\|\nabla\u\|_{L^2}^2+R^4\|\nabla\u\|_{L^2}^2.
\end{split}
\end{equation*}
We remark that in the above estimates, we used several times the interpolation
inequality: $$\|f\|_{W^{2,3}(\R^3)}\le
\|f\|_{W^{2,2}(\R^3)}^\theta\|f\|_{W^{2,q}(\R^3)}^{1-\theta}$$ for
some $\theta\in (0,1)$.
These estimates and \eqref{806} imply that, for $R$ sufficiently small,
\begin{equation}\label{807}
\begin{split}
\f{1}{2}\f{d}{dt}\int_{\R^3}\r|\partial_t\u|^2
dx+\f{\mu}{8}\int_{\R^3}|\nabla\partial_t\u|^2 dx\le
R^3\|\D\u\|_{L^2}^2+C\|\nabla\u\|_{L^2}^2+R^2\|\nabla\u\|_{L^3}^2.
\end{split}
\end{equation}
Integrating \eqref{807} over $(0,T)$, we obtain that, using
\eqref{ul1},
\begin{equation}\label{808}
\|\nabla\partial_t\u\|_{L^2(Q_T)}\le CR^{\f{3}{2}}.
\end{equation}
Here we used the estimate
$$\|\r_0\partial_t\u(0)\|_{L^2}\le
C(\|\u_0\cdot\nabla\u_0\|_{L^2}+\|\D\u_0\|_{L^2}+\|\nabla\r_0\|_{L^2}+\|\nabla
E_0\|_{L^2})\le \dl^4$$ by letting $t=0$ in \eqref{e12}. Thus, by
\eqref{804}, \eqref{808} and the Gagliardo-Nirenberg inequality,
we obtain
\begin{equation*} 
\|\partial_t\u\|_{L^2(0,T; L^q(\R^3))}\le
\|\partial_t\u\|_{L^2(Q_T)}^\theta\|\nabla\partial_t\u\|_{L^2(Q_T)}^{1-\theta}\le
CR^{\f{3-\t}{2}},
\end{equation*}
for some $\theta\in (\f{1}{2},1]$.
The proof of Lemma \ref{EEE3} is complete.
\end{proof}

With \eqref{809} in hand, we can now get the estimate for
$\|\nabla E\|_{L^2(0,T; L^q(\R^3))}$.

\begin{Lemma}\label{EEE2} For the same $\t\in(\f{1}{2},1]$ as in
Lemma \ref{EEE3}, it holds
\begin{equation}\label{cl2}
\|\nabla E\|_{L^2(0,T; L^q(\R^3))}\le CR^{\f{3-\t}{2}},
\end{equation}
for any $T\in(0, T_{\rm{max}})$.
\end{Lemma}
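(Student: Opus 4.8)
The plan is to mirror the strategy used in Lemma 6.1, but now working in the $L^2(0,T;L^q)$ scale instead of $L^p(0,T;L^q)$, and to feed in the refined bounds already established in this section. First I would revisit the decomposition $Z=\u-Z_1$ with $Z_1=\mathcal{L}(\Dv E)$ and equation \eqref{515}, now estimating $\|Z\|_{\mathcal{W}^{2,q}(0,T)}$ via Theorem \ref{T3} in the time exponent $2$: this gives $\|Z\|_{L^2(0,T;W^{2,q})}\le C(R+\|\mathcal{F}\|_{L^2(0,T;L^q)})$, where $\mathcal{F}=\mathcal{F}_1-\mathcal{F}_2$ is the same right-hand side as before. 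The key point is to re-estimate $\|\mathcal{F}_1\|_{L^2(0,T;L^q)}$ and $\|\mathcal{F}_2\|_{L^2(0,T;L^q)}$ using the improved ingredients: for $\mathcal{F}_1$ the dominant terms are $\r\partial_t\u$, which is now controlled by $\|\partial_t\u\|_{L^2(0,T;L^q)}\le CR^{(3-\t)/2}$ from Lemma \ref{EEE3}; $\r(\u\cdot\nabla)\u$, bounded via $\|\u\|_{L^\infty(0,T;L^q)}\le CR$ and $\|\nabla\u\|_{L^2(0,T;L^\infty)}\le C\|\u\|_{L^2(0,T;W^{2,q})}\le CR$; the pressure-type term $\nabla P(\r)$, controlled by $\|\nabla\r\|_{L^2(0,T;L^q)}\le CR$ from Lemma \ref{rrr}; and the remaining terms carrying an extra factor $\|E\|_{L^\infty(Q_T)}+\|\nabla E\|_{\cdot}$, each picking up a small prefactor $\sqrt{R}$ from \eqref{ue9}. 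For $\mathcal{F}_2=\mathcal{L}(\Dv(\nabla\u+\nabla\u\,E-(\u\cdot\nabla)E))$, I would again use the $W^{1,3q/(3+q)}$-to-$L^q$ elliptic embedding together with the Gagliardo–Nirenberg estimate $\|\nabla\u\|_{L^2(0,T;L^{3q/(q+3)})}\le C\|\u\|_{L^2(0,T;W^{2,q}\cap H^2)}\le CR$ and the bounds $\|E\|_{L^\infty(0,T;W^{1,q})},\|\nabla E\|_{L^2(0,T;L^q)}<\sqrt{R}$, which yields $\|\mathcal{F}_2\|_{L^2(0,T;L^q)}\le C(\mu,\lambda)(R+R^{3/2})$.

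Collecting these, I obtain
$$\|\mathcal{F}\|_{L^2(0,T;L^q)}\le C\big(R^{\frac{3-\t}{2}}+\alpha\|\nabla E\|_{L^2(0,T;L^q)}+\sqrt{R}\,\|\nabla E\|_{L^2(0,T;L^q)}+\sqrt{R}\,\|\sigma\|_{L^2(0,T;L^q)}\big),$$
and since $\|\sigma\|_{L^2(0,T;L^q)}=\|\nabla\ln\r\|_{L^2(0,T;L^q)}$ is comparable to $\|\nabla\r\|_{L^2(0,T;L^q)}\le CR$ on the region $\tfrac12\le\r\le\tfrac32$, the last term is absorbed into $R^{(3-\t)/2}$ (noting $3/2\ge(3-\t)/2$). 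Plugging into \eqref{516} and unwinding $\Dv E=-\mu\mathcal{L}^{-1}$ applied the wrong way — rather, recalling $\Dv E=\mu\mathcal{L}(\Dv E)$ inverted, i.e. $\|\Dv E\|_{L^2(0,T;L^q)}\le \mu\|Z\|_{L^2(0,T;W^{2,q})}+\|\u\|_{L^2(0,T;W^{2,q})}$, and combining with the curl identity \eqref{11112}, which gives $\|\mathrm{curl}\,E_i\|_{L^2(0,T;L^q)}\le 2\|E\|_{L^\infty(Q_T)}\|\nabla E\|_{L^2(0,T;L^q)}\le C\sqrt{R}\,\|\nabla E\|_{L^2(0,T;L^q)}$, I recover full control of $\|\nabla E\|_{L^2(0,T;L^q)}$ from $\|\Dv E\|$ and $\|\mathrm{curl}\,E\|$. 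This produces an inequality of the form
$$\|\nabla E\|_{L^2(0,T;L^q)}\le C(p,q,\mu,\lambda)R^{\frac{3-\t}{2}}+\big(C(p,q,\mu,\lambda)\mu\alpha+C\sqrt{R}\big)\|\nabla E\|_{L^2(0,T;L^q)}.$$

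The final step is absorption: under the standing assumption \eqref{AA} that $C(p,q,\mu,\lambda)\mu\alpha<1$, and by choosing $R$ small so that $C\sqrt{R}$ is less than the remaining gap, the $\|\nabla E\|_{L^2(0,T;L^q)}$ term on the right is absorbed into the left, yielding \eqref{cl2}. I expect the main obstacle to be bookkeeping of the exponent: one must check that every ``bad'' term genuinely carries a factor of at least $R^{(3-\t)/2}$ (the weakest power available, coming from $\partial_t\u$ via Lemma \ref{EEE3}) and that no term is only $O(R^{1})$ or worse — in particular the interplay between the Gagliardo–Nirenberg estimate for $\nabla\u$ in the intermediate Lebesgue exponent $3q/(q+3)$ and the restriction $q\in(3,6]$ must be used to keep the $\mathcal{F}_2$ contribution at size $R$, which is $\ge R^{(3-\t)/2}$ since $\t>1/2$. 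The absorption itself is routine once \eqref{AA} is invoked, exactly as in the proof of Lemma \ref{E1}.
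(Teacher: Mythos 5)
Your proposal follows a genuinely different route from the paper's. You recycle the decomposition $Z=\u-\mathcal{L}(\Dv E)$ and the maximal parabolic regularity of Theorem~\ref{T3} from the proof of Lemma~\ref{E1}, now run in the $L^2(0,T;L^q)$ scale, and close by recombining $\Dv E$ with the curl identity~\eqref{11112}. The paper instead works directly with an $L^q$-energy estimate for $\Dv E$: taking the row-divergence of the transport equation~\eqref{e13} gives $\mu\D\u=\mu\partial_t\Dv E+\mu\Dv(\u\cdot\nabla E-\nabla\u\,E)$, and combining this with $\Dv(\r(I+E)(I+E)^\top)=\Dv((\r-1)E)+\Dv E+\Dv(\r EE^\top)$ (Remark~\ref{QQ}) turns the momentum equation into a relation $\mu\,\partial_t\Dv E+\Dv E=\text{(lower order)}$; testing against $|\Dv E|^{q-2}\Dv E$ and dividing by $\|\Dv E\|_{L^q}^{q-2}$ produces the linear differential inequality~\eqref{811}--\eqref{812}, which is then integrated in time and corrected by the curl identity. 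One consequence of the paper's choice is that it does \emph{not} need to reinvoke the smallness assumption~\eqref{AA}: the coefficient of $\Dv E$ comes out to be one, and all the remaining absorptions are through factors of $R$, not $\mu\alpha$. Your route reinvokes~\eqref{AA}; this is not wrong (it is a standing hypothesis of Theorem~\ref{T1}(I)--(II)) but it is an avoidable dependence here.

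There is, however, a gap in the exponent bookkeeping that you flag but do not resolve, and in fact resolve in the wrong direction. Your estimate $\|\mathcal{F}_2\|_{L^2(0,T;L^q)}\le C(R+R^{3/2})$ and the pressure term controlled through Lemma~\ref{rrr} both contribute a term of size $R$ (not of size $R^{(3-\t)/2}$) to $\|\mathcal{F}\|_{L^2(0,T;L^q)}$. You assert that this is acceptable because ``$R\ge R^{(3-\t)/2}$ since $\t>1/2$'', but since $0<R<1$ and $(3-\t)/2\ge 1$, the inequality $R\ge R^{(3-\t)/2}$ says precisely that $R$ is the \emph{larger} quantity: an $O(R)$ contribution cannot be absorbed into an $O(R^{(3-\t)/2})$ bound, only the reverse. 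What your argument actually delivers after the absorption step is $\|\nabla E\|_{L^2(0,T;L^q)}\le CR$, which coincides with the claimed~\eqref{cl2} only when $\t=1$. To reach the sharper exponent for $\t<1$ one would need the $\nabla\r$ and $\mathcal{F}_2$ contributions to be strictly smaller than $O(R)$, and neither your estimates nor, as far as I can see, the $\|\nabla\r\|^2_{L^q}$ term on the right of the paper's own~\eqref{812} presently do that; so the difficulty is intrinsic rather than a defect peculiar to your approach, but it should not be papered over by a sign error in the comparison of powers of $R$.
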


\begin{proof}
Substituting the following two facts
$$\partial_t\Dv E=\Dv(-\u\cdot\nabla E+\nabla\u E)+\D\u,\quad \Dv(\r E)=\Dv((\r-1) E)+\Dv E,$$
into \eqref{e12}, multiplying the resulting equation by $|\Dv
E|^{q-2}\Dv E$ and integrating it over $\R^3$, we can obtain
\begin{equation}\label{810}
\begin{split}
&\f{\mu}{q}\f{d}{dt}\|\Dv E\|_{L^q}^q+\|\Dv
E\|_{L^q}^q\\&\le\left|\int_{\R^3}\r\partial_t\u|\Dv E|^{q-2}\Dv
Edx\right|+\left|\int_{\R^3}\r\u\nabla\u|\Dv E|^{q-2}\Dv E
dx\right|\\&\quad+\left|\int_{\R^3}\nabla P |\Dv E|^{q-2}\Dv E
dx\right|+\left|\int_{\R^3}\Dv(\r EE^\top)|\Dv E|^{q-2}\Dv E
dx\right|\\&\quad+\left|\int_{\R^3}\Dv((\r-1)E)|\Dv E|^{q-2}\Dv E
dx\right| \\
&\quad +\left|\int_{\R^3}\Dv(\nabla\u E-\u\cdot\nabla E)|\Dv
E|^{q-2}\Dv E dx\right|\\&:=\sum_{m=1}^6 M_m,
\end{split}
\end{equation}
where
\begin{equation*}
\begin{split}
M_1\le \|\r\|_{L^\infty}\|\partial_t\u\|_{L^q}\|\Dv
E\|_{L^q}^{q-1};
\end{split}
\end{equation*}
\begin{equation*}
\begin{split}
M_2\le \|\r\|_{L^\infty}\|\u\|_{L^q}\|\nabla\u\|_{L^\infty}\|\Dv
E\|_{L^q}^{q-1}\le R\|\u\|_{W^{2,q}(\R^3)}\|\Dv E\|_{L^q}^{q-1};
\end{split}
\end{equation*}
\begin{equation*}
\begin{split}
M_3\le C\|\nabla\r\|_{L^q}\|\Dv E\|_{L^q}^{q-1};
\end{split}
\end{equation*}
\begin{equation*}
\begin{split}
M_4&\le \|\nabla\r\|_{L^q}\|E\|_{L^\infty}^2\|\Dv
E\|_{L^q}^{q-1}+\|\r\|_{L^\infty}\|E\|_{L^\infty}\|\nabla
E\|_{L^q}\|\Dv E\|_{L^q}^{q-1}\\
&\le (R^2\|\nabla\r\|_{L^q}+R\|\nabla E\|_{L^q})\|\Dv
E\|_{L^q}^{q-1};
\end{split}
\end{equation*}
\begin{equation*}
\begin{split}
M_5&\le \|\r-1\|_{L^\infty}\|\nabla E\|_{L^q}\|\Dv
E\|_{L^q}^{q-1}+\|\nabla\r\|_{L^q}\|E\|_{L^\infty}\|\Dv
E\|_{L^q}^{q-1}\\
&\le R(\|\nabla E\|_{L^q}+\|\nabla \r\|_{L^q})\|\Dv
E\|_{L^q}^{q-1};
\end{split}
\end{equation*}
\begin{equation*}
\begin{split}
M_6&\le (\|\nabla\u\|_{L^\infty}\|\nabla
E\|_{L^q}+\|\D\u\|_{L^q}\|E\|_{L^\infty})\|\Dv E\|_{L^q}^{q-1} \le
R\|\u\|_{W^{2,q}}\|\Dv E\|_{L^q}^{q-1}.
\end{split}
\end{equation*}
 With those estimates in hand, we
multiply \eqref{810} by $|\Dv E|_{L^q}^{2-q}$ to deduce that,
using Young's inequality,
\begin{equation}\label{811}
\begin{split}
\f{\mu}{2}\f{d}{dt}\|\Dv E\|_{L^q}^2+\|\Dv E\|_{L^q}^2&\le
C\|\partial_t\u\|_{L^q}^2+R^2\|\u\|_{W^{2,q}}^2+\|\nabla\r\|_{L^q}^2+R^2\|\nabla
E\|_{L^q}^2.
\end{split}
\end{equation}
On the other hand, we still have
$$\|\textrm{curl}E\|_{L^q}^2\le \|E\|_{L^\infty}^2\|\nabla
E\|_{L^q}^2\le CR^2\|\nabla E\|_{L^q}^2.$$ Hence, substituting
this into \eqref{811}, we get
\begin{equation}\label{812}
\begin{split}
\f{\mu}{2}\f{d}{dt}\|\Dv E\|_{L^q}^2+\|\nabla E\|_{L^q}^2&\le
C\|\partial_t\u\|_{L^q}^2+R^2\|\u\|_{W^{2,q}}^2+\|\nabla\r\|_{L^q}^2
+CR^2\|\nabla E\|_{L^q}^2.
\end{split}
\end{equation}
Integrating \eqref{812} over $(0,T)$ and using the estimates
\eqref{cl5}, \eqref{809}, we obtain
$$\|\nabla E\|_{L^2(0,T; L^q(\R^3))}\le CR^{\f{3-\t}{2}}.$$
The proof of Lemma \ref{EEE2} is complete.
\end{proof}

\bigskip

\section*{Acknowledgments}

Xianpeng Hu's research was supported in part by the National Science
Foundation grant DMS-0604362 and by the Mellon Predoctoral Fellowship of the University of Pittsburgh.
Dehua Wang's research was supported in part by the National Science
Foundation under grants DMS-0604362 and DMS-0906160,
and by the Office of Naval Research under Grant N00014-07-1-0668.

\bigskip

\end{document}